\documentclass[12pt]{amsart}
\usepackage{url}
\usepackage{amssymb}
\usepackage{geometry}
\usepackage{verbatim}
\usepackage{multirow}
\newtheorem{theorem}{Theorem}
\newtheorem{remark}{Remark}
\newtheorem{lemma}{Lemma}

\newtheorem{proposition}{Proposition}

\newcommand{\tmop}[1]{\ensuremath{\operatorname{#1}}}
\numberwithin{equation}{section}
\numberwithin{lemma}{section}
\numberwithin{proposition}{section}
\numberwithin{corollary}{section}

\def\sumstar{\sideset{}{^*}\sum}
\def\sumfund{\sideset{}{^\flat}\sum}
\def\pamod{\! \! \! \! \pmod}
\title[Signs of Fourier coefficients]{Signs of Fourier coefficients of half-integral weight modular forms}
\author{Stephen Lester and Maksym Radziwi\l\l}
\address{School of Mathematical Sciences, Queen Mary University of London, 327 Mile End Road, London E1 4NS, UK}
\email{s.lester@qmul.ac.uk}
\address{Department of Mathematics, Caltech, 1200 E California BLVD, Pasadena, CA 91125, USA}
\email{maksym.radziwill@gmail.com}
\date{\today}
\begin{document}

\begin{abstract}
  Let $g$ be a Hecke cusp form of half-integral weight, level $4$ and belonging to Kohnen's plus subspace. Let $c(n)$ denote the $n$th Fourier coefficient of $g$, normalized so that $c(n)$ is real for all $n \geq 1$. A theorem of Waldspurger determines the magnitude of $c(n)$ at fundamental discriminants $n$ by establishing that the square of $c(n)$ is proportional to the central value of a certain $L$-function. The signs of the sequence $c(n)$ however remain mysterious.

  Conditionally on the Generalized Riemann Hypothesis, we show that $c(n) < 0$ and respectively $c(n) > 0$ holds for a positive proportion of fundamental discriminants $n$. Moreover we show that the sequence $\{c(n)\}$ where $n$ ranges over fundamental discriminants changes sign a positive proportion of the time.  
  
  Unconditionally, it is not known that a positive proportion of these coefficients are non-zero
%   Our result requires the assumption of the Generalized Riemann Hypothesis since it is not known unconditionally that a positive proportion of the $c(n)$'s are non-zero.
and we prove results about the sign of $c(n)$ which are of the same quality as the best known non-vanishing results. Finally we discuss extensions of our result to general half-integral weight forms $g$ of level $4 N$ with $N$ odd, square-free. 
\end{abstract}

%  Unconditionally it remains a challenging open problem to show that a positive proportion of the $c(n)$'s are non-zero, and correspondingly we obtain weaker unconditional results on the signs of the coefficients of $g$. 

%Waldspurger's theorem determines the magnitude of the coefficients of half-integral weight forms, establishing that their squares are proportional to central value of $L$-functions. It does not however yield any information on the signs of the coefficients of a half-integral weight forms. 

%  For a half-integral weight Hecke cusp form, Waldspurger's theorem shows that the square of its $n^{th}$ Fourier coefficients is proportional to a central value of an $L$-function at square-free $n$. For forms with real Fourier coefficients, it remains to understand the sign of the coefficient. In this article, we study sign changes of Fourier coefficients of half-integral weight modular forms. Assuming the Generalized Riemann Hypothesis, we show that for Hecke forms in the Kohnen space that along the sequence of non-zero Fourier coefficients there are a positive proportion of sign changes. We also prove unconditional results, which are of the same quality as the best known non-vanishing results of these coefficients.
  %  
   % The main ingredients in the proof are mollified moments of $L$-functions and estimates for shifted convolution sums, which may be of independent interest.

\maketitle

\section{Introduction}

Let $k \ge 2 $ be an integer and $g$ be a weight $k+ \tfrac 12$ cusp form for $\Gamma_0(4)$. Every such $g$ has a Fourier expansion
\[
g(z)=\sum_{n \ge 1} c(n) n^{\frac{(k-1/2)}{2}}e(nz).
\]
The Fourier coefficients $c(n)$ encode arithmetic information. For instance under certain hypotheses, Waldspurger's Theorem shows that for fundamental discriminants $d$,  $|c(|d|)|^2$ is proportional to the central value of an $L$-function, so that the magnitude of the $L$-function essentially determines the size of the coefficient $c(n)$. However for $g$ with real Fourier coefficients, their signs remain mysterious.  In this article we contribute towards understanding the sign of such coefficients at fundamental discriminants through examining the number of coefficients which are positive (respectively negative) as well as the number of sign changes in-between. 

% In this article, we examine the sign of the Fourier coefficients of half-integral weight forms with real coefficients. In particular, we look at the problem of obtaining lower bounds on the number of sign changes of the sequence $\{ c(n) \}$ of such forms, where $n$ ranges over fundamental discriminants. 

%Since the coefficient of a half-integral weight form can be frequently zero, we should clarify that when talking of sign changes we ignore the values at which the coefficients are zero. 

%By sign changes we mean the following. Let $\mathcal N \subset \mathbb Z$ and $I \subset \mathbb R$ be an interval. Also, given $n \in \mathcal N$ let $n^+=\min_{m \in \mathcal N} \{ m>n :c(m) \neq 0\}$. The number of sign changes of $\{ a_g(n)\}$ over $\mathcal N$ up to $X$ equals
%\[
%\mathcal S_{g, \mathcal N}(X) =\# \{ n \in \mathcal N :  n^+ \le X \text{ and } \tmop{sgn}(c(n)c(n^+))=-1\},
%\]
%where we use the convention $\tmop{sgn}(0)=0$.
% Also, let
% \[
% \mathcal S_{g, \mathcal N}^{+}(X)=\# \{ n \in \mathcal N : n \le X \text{ and } a_g(n)>0\}.
% \]

Signs of Fourier coefficients of half-integral weight forms have been studied by many authors following the works of Knopp-Kohnen-Pribitkin \cite{KKP} and Bruinier-Kohnen \cite{BK}, the former of which showed that such forms have infinitely many sign changes. Subsequent works \cite{HKKI, KLW, LRW} showed that the sequence $\{c(n)\}$ exhibits many sign changes under suitable conditions (such as the form having real Fourier coefficients). Notably, Jiang-Lau-L\"u-Royer-Wu \cite{JLLRW} showed for suitable $g$ 
that 
% there are more than $\gg X^{1/2}$ sign changes along integers $1 \leq n \leq X$, and moreover that 
for every $\varepsilon > 0$ there are more than $\gg X^{2/9 - \varepsilon}$ sign changes along square-free integers $n \in [1, X]$.
%\[
%\mathcal S_{g, \mathbb N}(X) \gg X^{1/2} 
%\]
%and for $\mathcal N$ equal to the set of square-free integers that
%\[
%\mathcal S_{g, \mathcal N}(X) \gg X^{2/9-\varepsilon}.
%\]
They also showed 
% the latter 
this result can be improved assuming the Generalized Lindel\"of Hypothesis\footnote{Here the Generalized Lindel\"of Hypothesis is assumed for $L$-functions attached to quadratic twists of level 1 Hecke eigenforms.} with an exponent of $1/4$ in place of $2/9$.
% {\tt I removed mention of their result over the integers, since I think it basically follows from other results on $\{c(d p^2)\}_p$}

For an integral weight Hecke cusp form $f$ the second named author and Matom\"aki \cite{MatomakiRadziwill}  proved a stronger result, establishing a positive proportion of sign changes along the positive integers. This uses the multiplicativity of the Fourier coefficients of $f$ in a fundamental way. Fourier coefficients of half-integral weight Hecke cusp forms lack this property, except at squares. So one may wonder whether the Fourier coefficients of half-integral weight Hecke cusp forms also exhibit a positive proportion of sign changes, along the sequence of fundamental discriminants.

In this article we answer this question in the affirmative. We show that under the assumption of the Generalized Riemann Hypothesis (GRH) there exists a positive proportion of fundamental discriminants at which the Fourier coefficients of a suitable half-integral weight form are positive as well as a positive proportion at which the coefficients are negative. Moreover, we show under GRH that the coefficients exhibit a positive proportion of sign changes along the sequence of fundamental discriminants. 

For simplicity, our results are stated for the Kohnen space $S_{k+1/2}^+$, which consists of all weight $k+1/2$ modular forms on $\Gamma_0(4)$ whose $nth$ Fourier coefficient equals zero whenever $(-1)^k n \equiv 2,3 \pmod{4}$. In this space Shimura's correspondence between half-integral weight forms and integral weight forms is well understood. Kohnen proved \cite{KohnenMathAnn} there exists a Hecke algebra isomorphism between $S_{k+1/2}^+$ and the space of level $1$ cusp forms of weight $2k$.
% Forms in the Kohnen space are nicely behaved.
% For instance, an analogue of the multiplicity one theorem for integral weight forms holds. 
% Additionally, 
Also, every Hecke\footnote{We call a weight $k+1/2$ cusp form on $\Gamma_0(4)$ a Hecke cusp form if it is an eigenfunction of the Hecke operator $T_{p^2}$ (see \cite{Shimura1}) for each $p>2$.}  cusp form $g \in S_{k+1/2}^+$ can be normalized so that it has real coefficients\footnote{The numbers $c(n)n^{(k-1/2)/2}$ lie in the field generated over $\mathbb Q$ by the Fourier coefficients of its Shimura lift, which is a level $1$ Hecke eigenform of weight $2k$, so these numbers are real and algebraic (see Proposition 4.2 of \cite{KP} and also the remarks before Theorem 1 of \cite{KZ}).} and from here on we assume that $g$ has been normalized in this way.

Let $\mathbb{N}^{\flat}$ denote the set of fundamental discriminants of the form $8n $ with $n > 0$ odd, square-free. Also, let $\mathbb N_g^{\flat}(X)=\{n \in \mathbb{N}^{\flat} \cap [1,X] : c(n) \neq 0 \}$.

\begin{theorem} \label{thm:sign} Assume the Generalized Riemann Hypothesis. Let $k \ge 2$ be an integer and $g \in S_{k+1/2}^+$ be a Hecke cusp form. 
Then for all $X$ sufficiently large the number of sign changes of the sequence $\{ c(n) \}_{n \in \mathbb N_g^{\flat}(X)}$ is $ \gg X$.

  In particular, for all $X$ sufficiently large we can find $\gg X$ integers $d \in \mathbb{N}^{\flat} \cap [1,X]$ such that $c(d) < 0$, and we can find $\gg X$ integers $d \in \mathbb{N}^{\flat}\cap [1,X]$ such that $c(d) < 0$. 
\end{theorem}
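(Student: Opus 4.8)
The plan is to connect the signs of $c(n)$ to the central $L$-values $L(\tfrac12, f \otimes \chi_{8n})$ via Waldspurger's formula, and then to leverage GRH to control both the sign and the typical size of these values across the family $n \in \mathbb{N}^\flat$. Concretely, Kohnen's refinement of Waldspurger's theorem gives $c(8n)^2 = \kappa \cdot L(\tfrac12, f \otimes \chi_{8n}) \cdot (8n)^{-(k-1/2)} \cdot (\text{explicit archimedean factor})$ for $f$ the Shimura lift, so $c(8n) \neq 0$ exactly when the twisted central value is nonzero, and the magnitude $|c(8n)|$ is essentially $L(\tfrac12, f\otimes\chi_{8n})^{1/2}$. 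The \emph{sign} of $c(8n)$, however, is not captured by this squared relation; the right object to track is instead a suitable ratio $c(8n)/c(8m)$ or $c(8mn^2)$-type relation, or — following the strategy that makes positive proportion accessible — a comparison of $c(8n)$ against a ``twisted first moment'' of the $c$'s weighted by a short Dirichlet polynomial. First I would set up the mollified first and second moments of $c(8n)$ over $n \in \mathbb{N}^\flat \cap [1,X]$: by Cauchy--Schwarz applied in the standard way (as in non-vanishing arguments à la Soundararajan, Iwaniec--Sarnak), a lower bound $\gg X$ on the number of $n$ with $c(8n) \neq 0$ follows from matching upper and lower bounds on these moments.

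Next, to upgrade non-vanishing to sign control, I would exploit that GRH gives us pointwise bounds $|c(8n)| \leq (8n)^{o(1)}$ and, more importantly, a \emph{distributional} statement: the values $\log |c(8n)|$ (equivalently $\log L(\tfrac12, f\otimes\chi_{8n})$) are, on GRH, well-approximated by short Euler/Dirichlet sums $\sum_{p \leq X^\varepsilon} \tfrac{\lambda_f(p)\chi_{8n}(p)}{\sqrt p}$, which behave like sums of weakly-dependent random signs as $n$ varies (this is the analogue of Soundararajan's resonance/moment machinery and Radziwi\l\l--Soundararajan's work on moments of $L$-functions). The key new ingredient is to compute a first moment of $c(8n)$ itself, not its square: here I would expand $c(8n)$ as a Dirichlet series in $n$ (a Rankin--Selberg / approximate functional equation expansion of the half-integral weight form against a theta series), so that $\sum_{n \leq X} c(8n) w(n/X)$ becomes a main term plus error, and the main term should match against $c(8m^2)$-type diagonal contributions coming from the lack of multiplicativity except at squares. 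Comparing the sign of this first moment to the positivity of the second moment forces $c(8n)$ to take both signs on a positive-proportion subset; a standard argument (if $c(8n) \geq 0$ for all but $o(X)$ values, then the first moment would be $\gg$ the second moment's square root times $X$, contradicting the computed size of the first moment) then yields a positive proportion of sign changes after a dyadic/consecutive-elements bookkeeping step.

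The main obstacle, I expect, is controlling the \emph{first} moment $\sum_{n \in \mathbb{N}^\flat \cap [1,X]} c(8n)$ (and its mollified version) with a genuine power saving or at least an error term smaller than $X^{1/2+\varepsilon}$, since the family $\{c(8n)\}$ over fundamental discriminants is not multiplicative and one cannot simply invoke multiplicative-function technology as in Matom\"aki--Radziwi\l\l. One must instead open the Fourier coefficient via the Shimura/Waldspurger dictionary or via a Poisson-summation-in-the-discriminant approach (Soundararajan's technique for the first moment of $L(\tfrac12,\chi_d)$ over quadratic characters, or the Conrey--Iwaniec--type analysis for twisted $L$-values), and here the conductor-squared issue and the dual sum contributions are delicate; this is also where GRH enters to bound off-diagonal terms that classically obstruct an unconditional treatment. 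A secondary technical point is ensuring the relevant archimedean and local factors in Kohnen--Waldspurger are positive and uniformly bounded above and below on the restricted family $8n$, $n$ odd and square-free, so that the sign of $c(8n)$ genuinely tracks a quantity we can analyze; restricting to $\mathbb{N}^\flat$ (discriminants $8n$) rather than all fundamental discriminants is precisely what makes these local conditions clean and the theta-series expansion manageable.
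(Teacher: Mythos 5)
Your moment set-up (Waldspurger/Kohnen--Zagier to convert $|c(8n)|^2$ into $L(\tfrac12,f\otimes\chi_{8n})$, mollified second and fourth moments to produce $\gg X$ nonvanishing coefficients of roughly typical size, and a small signed first moment to force both signs to occur $\gg X$ times) is close in spirit to what the paper does for the ``in particular'' part, but there is a genuine gap at the main assertion: $\gg X$ positive values together with $\gg X$ negative values do \emph{not} imply $\gg X$ sign changes. The sequence could consist of a long block of positive terms followed by a long block of negative terms, giving a single sign change, and no ``dyadic/consecutive-elements bookkeeping'' applied to global moments can rule this out. To count sign changes at positive density you need \emph{localized} information: you must show that for $\gg X$ integers $x\in[X,2X]$ the window $[x,x+y]$, with $y$ an absolute constant (the paper takes $y=\varepsilon^{-6}$), already exhibits cancellation in the signed sum $\sum_{x\le 8n\le x+y}c(8n)M((-1)^k8n;\tfrac12)$ while the corresponding sum of $|c(8n)|M$ is large. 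The first of these is the missing ingredient in your plan: the paper proves it by bounding the mean value over $x$ of the short-interval signed sums (Proposition \ref{mainprop}), which reduces to a shifted convolution estimate for half-integral weight Fourier coefficients twisted by additive characters (Proposition \ref{scpprop}), proved with Jutila's circle method, the explicit transformation law \eqref{eq:transformation}, and Weil's bound for Kloosterman sums. A global first moment $\sum_{n\le X}c(8n)w(n/X)$ with a power-saving error, which is what you propose to compute, gives no control over almost all bounded-length windows and cannot substitute for this.

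Two smaller points. First, the comparison mechanism only detects sign changes of $c(8n)$ itself if the mollifier is pointwise positive; the paper's mollifier \eqref{eq:mollifierdef} is an Euler-product-type object built from truncated exponentials precisely so that $M(\cdot;\tfrac12)>0$, and this positivity has to be built into the construction rather than assumed. Second, to get a genuine positive proportion (rather than $\gg X^{1-\varepsilon}$, which is Theorem \ref{thm:sign2}) you need the mollified fourth moment of $c$, equivalently the mollified second moment of $L(\tfrac12,f\otimes\chi_d)$, to be $\ll X$ with no $\log$ or $X^{\varepsilon}$ loss; under GRH this is exactly where the Harper--Soundararajan upper-bound machinery (Proposition \ref{prop:moments}) with the Euler-product mollifier enters, and invoking Heath-Brown or Soundararajan--Young alone, as your sketch suggests, would only recover the weaker unconditional count.
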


The proof of Theorem \ref{thm:sign} uses the explicit form of Waldspurger's Theorem due to Kohnen and Zagier \cite{KZ}. Given a Hecke cusp form $g \in S_{k+1/2}^+$ they show that for each fundamental discriminant $d$ with $(-1)^k d>0$ that
\begin{equation} \label{eq:waldspurger}
    |c(|d|)|^2=  L(\tfrac12, f \otimes \chi_d) \cdot \frac{(k-1)!}{\pi^k } \cdot \frac{\langle g,g \rangle}{\langle f,f \rangle}.
\end{equation}
Here $f$ is a weight $2k$ Hecke cusp form of level $1$ which corresponds to $g$ as described above, $L(s,f\otimes \chi_d)$ is the $L$-function
\[
L(s,f\otimes \chi_d)=\sum_{n \ge 1} \frac{\lambda_f(n) \chi_d(n)}{n^s}, \qquad \tmop{Re}(s)>1
\]
where $\lambda_f(n)$ are the Hecke eigenvalues of $f$, $\chi_d(\cdot)$ is the Kronecker symbol. Also,
\[
\langle f,f \rangle= \int_{\tmop{SL}_2(\mathbb Z) \backslash \mathbb H} y^{2k}|f(z)|^2 \, \frac{dx dy}{y^2}, \qquad \qquad \langle g,g \rangle= \frac16 \int_{\Gamma_0(4) \backslash \mathbb H} y^{k+1/2}|g(z)|^2 \, \frac{dx dy}{y^2}.
\]
In Theorem \ref{thm:sign} we assume GRH for $L(s, f \otimes \chi_{d})$ for every fundamental discriminant $d$ with $(-1)^k d>0$.

The restriction to $d \in \mathbb{N}^{\flat}$ is important. As the proof of Theorem \ref{thm:uncondsign} below will show, it is easy to produce many positive (resp. negative) coefficients along \textit{the integers}, assuming a suitable non-vanishing result. 

%Theorem \ref{thm:sign} can be generalized to half-integral weight forms $g$ of general level $4N$ with $N$ odd, square-free, provided that $g$ satisfies a multiplicity one theorem. We discuss these details briefly in the Appendix. 

\subsection{Unconditional results}

We also are able to prove a quantitatively weaker yet unconditional result.
% for any half-integral weight cusp form.
% , which also holds in greater generality (we need not assume it is a Hecke eigenform).

\begin{theorem} \label{thm:sign2}
  Let $k \ge 2$ be an integer and $g \in S_{k+1/2}^+$ be a Hecke cusp form. Then for any $\varepsilon>0$ and all $X$ sufficiently large
  the sequence $\{ c(n) \}_{n \in \mathbb N_g^{\flat}(X)}$ has $\gg X^{1-\varepsilon}$ sign changes.
  
\end{theorem}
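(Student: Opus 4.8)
The plan is to localise Waldspurger's formula \eqref{eq:waldspurger} to short intervals. Fix $\varepsilon>0$, put $H=X^{\varepsilon}$, and partition $[1,X]$ into $K\asymp X/H=X^{1-\varepsilon}$ consecutive blocks $B_1,\dots,B_K$ of length $H$. The goal is to show that all but $o(K)$ of these blocks contain a sign change of $\{c(n)\}_{n\in\mathbb N_g^{\flat}(X)}$, which immediately gives $\gg K\gg X^{1-\varepsilon}$ sign changes. The mechanism is that, for a typical block $B$, the coefficients $c(n)$ with $n\in\mathbb N^{\flat}\cap B$ carry enough $\ell^1$-mass while their sum is small, and this forces a sign change inside $B$.

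First, the \emph{mass estimate}: for all but $o(K)$ blocks $B$ one should have $\sum_{n\in\mathbb N^{\flat}\cap B}|c(n)|\gg H^{1-\varepsilon}$. By \eqref{eq:waldspurger}, $\sum_{n\in\mathbb N^{\flat}\cap B}c(n)^2$ equals, up to a positive constant $\kappa=\kappa(g)$, the short first moment $\sum_{n\in\mathbb N^{\flat}\cap B}L(\tfrac12,f\otimes\chi_n)$, which is $\gg H$ for all but $o(K)$ of the block-centres: this follows from the first moment asymptotic for quadratic twists with a power-saving error term, together with a short-interval variance bound deduced from the unconditional second moment estimate $\sum_{n\le X}L(\tfrac12,f\otimes\chi_n)^2\ll X^{1+o(1)}$ and a shifted second-moment computation. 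That same second moment bound and Markov's inequality give $\sum_{n\in\mathbb N^{\flat}\cap B}c(n)^4=\kappa^2\sum_{n\in\mathbb N^{\flat}\cap B}L(\tfrac12,f\otimes\chi_n)^2\ll H^{1+o(1)}$ off an exceptional set of $o(K)$ blocks. Hölder's inequality in the shape $\sum_{n\in B}c(n)^2\le\big(\sum_{n\in B}|c(n)|\big)^{2/3}\big(\sum_{n\in B}c(n)^4\big)^{1/3}$ then yields $\sum_{n\in\mathbb N^{\flat}\cap B}|c(n)|\gg H^{1-\varepsilon}$ on the remaining set of ``rich'' blocks, which has size $(1-o(1))K$.

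Second, the \emph{cancellation estimate}: one wants $\sum_{j\le K}\big|\sum_{n\in\mathbb N^{\flat}\cap B_j}c(n)\big|^2\ll X^{1+o(1)}$, so that by Cauchy--Schwarz $\sum_{j\le K}\big|\sum_{n\in\mathbb N^{\flat}\cap B_j}c(n)\big|\ll\sqrt K\,X^{1/2+o(1)}=X^{1-\varepsilon/2+o(1)}$. Expanding the square, the diagonal contributes $\sum_{n\in\mathbb N^{\flat}\cap[1,X]}c(n)^2\asymp X$ (again by \eqref{eq:waldspurger} and the first moment asymptotic), while the off-diagonal is bounded by $\sum_{0<h<H}\big|\sum_n c(n)c(n+h)\big|$; since there are only $H=X^{\varepsilon}$ shifts, a power-saving bound $\sum_n c(n)c(n+h)\ll X^{1-\eta}$ for shifted convolution sums of half-integral weight Fourier coefficients — available from the spectral theory of shifted convolution sums, or through the Shimura lift, and alternatively from a Gallagher-type mean value linking short-interval sums to $\sum_n c(n)n^{-s}$ on short vertical segments — suffices. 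To conclude: if a rich block $B$ contained no internal sign change, then $c(n)$ has a fixed sign on $\mathbb N^{\flat}\cap B$ away from its zeros, so $\big|\sum_{n\in\mathbb N^{\flat}\cap B}c(n)\big|=\sum_{n\in\mathbb N^{\flat}\cap B}|c(n)|\gg H^{1-\varepsilon}$, and by the cancellation estimate the number of such blocks is $\ll X^{1-\varepsilon/2+o(1)}/H^{1-\varepsilon}=o(K)$; together with the $o(K)$ non-rich blocks, all but $o(K)$ blocks contain a sign change.

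Throughout, the restriction to $\mathbb N^{\flat}$, i.e.\ to discriminants $8m$ with $m$ odd and square-free, is essential both to make \eqref{eq:waldspurger} applicable and because the square-free condition is removed by M\"obius inversion, producing auxiliary sums over $c(8d^2m)$ which must be reduced to the case $d=1$ using the Hecke relations for $T_{p^2}$ (with the range $d>X^{\eta}$ truncated trivially using that it contributes a negligible amount of $\ell^1$-mass). I expect the main obstacle to be precisely the cancellation estimate: controlling the shifted convolution sums of half-integral weight coefficients uniformly over the short shifts $h\le X^{\varepsilon}$ and in the presence of the square-free sieving is the technical heart of the argument, with the short-interval variance estimate for $\sum_n L(\tfrac12,f\otimes\chi_n)$ a further, more routine, ingredient; the parameter $\varepsilon$ may then be taken arbitrarily small.
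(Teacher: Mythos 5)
Your overall skeleton (compare the $\ell^1$-mass of $c(n)$ in short blocks against the cancellation in $\sum_{n\in B}c(n)$, with Waldspurger \eqref{eq:waldspurger}, a global fourth-moment bound, H\"older, and power-saving shifted convolution estimates for half-integral weight coefficients) is the same as the paper's, and the cancellation half of your plan is essentially Proposition \ref{mainprop}/Proposition \ref{scpprop} together with Lemma \ref{le:smallvalues}. The genuine gap is in your \emph{mass estimate}. You claim that for all but $o(K)$ blocks $B$ of length $H=X^{\varepsilon}$ one has $\sum_{n\in\mathbb N^{\flat}\cap B}c(n)^2\gg H$, i.e.\ $\sum_{n\in B}L(\tfrac12,f\otimes\chi_{8n})\gg H$, and you propose to get this from the first-moment asymptotic plus a variance bound over blocks. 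That variance bound is not deducible from the ingredients you cite: expanding $\sum_j\bigl(\sum_{n\in B_j}L(\tfrac12,f\otimes\chi_{8n})\bigr)^2$ forces you to evaluate, with a main term and not merely bound, the shifted second moments $\sum_{n\le X}L(\tfrac12,f\otimes\chi_{8n})L(\tfrac12,f\otimes\chi_{8(n+h)})$ uniformly for $0<|h|\le H$; correlations of central values at nearby discriminants are not known unconditionally (Heath-Brown's bound only gives the diagonal and an upper bound), and they are not a ``routine'' ingredient. Worse, the statement you want is intrinsically out of reach of current unconditional technology: density-one richness of blocks of length $X^{\varepsilon}$ would in particular give nonvanishing of $L(\tfrac12,f\otimes\chi_{8n})$ for $\gg X^{1-o(1)}$ discriminants in essentially every short interval, whereas even a positive proportion of nonvanishing over $[1,X]$ is not known unconditionally (the paper only has Ono--Skinner's $\gg X/\log X$, and even the GRH argument of Theorem \ref{thm:sign} needs the mollifier precisely because one cannot show that \emph{almost all} short intervals carry large $L$-mass).

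The repair, which is what the paper does in Lemma \ref{le:largevalues}, is to notice that the theorem does not require almost all blocks to be rich: since one may take windows of length $y=X^{6\varepsilon}$ and only needs $X^{1-O(\varepsilon)}$ sign changes, it suffices to produce $\gg X^{1-\frac32\varepsilon}$ rich starting points $x$, with the weaker threshold $\sum_{x\le 8n\le x+y}|c(8n)|>y/X^{\varepsilon}$. This follows from purely \emph{global} inputs: $\sum_{X\le n\le 2X}|c(8n)|^2\asymp X$ (Lemma \ref{le:elementary}, via the twisted first moment, Proposition \ref{prop:twisted}), the global fourth moment $\ll X^{1+\varepsilon}$ from Heath-Brown, H\"older to get $\sum|c(8n)|\gg X^{1-\varepsilon/2}$, truncation at $|c(8n)|\le X^{\varepsilon}$, and an averaging/pigeonhole count over windows --- no short-interval variance and no shifted moments of $L$-values are needed. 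If you replace your almost-all mass estimate by this weaker rich-window count (and correspondingly only claim sign changes in a $\gg X^{-O(\varepsilon)}$ proportion of windows), your argument aligns with the paper's and goes through; as written, the mass estimate is the step that fails. A minor further caveat: the removal of the square-free condition should be done as in the paper (M\"obius with the large-$d$ range handled trivially and the small-$d$ divisibility folded into the shifted convolution as a congruence/additive twist, which is why Proposition \ref{scpprop} carries the factor $e(nv/\Delta)$), rather than only via the Hecke relations as you suggest.
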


Theorems \ref{thm:sign} and \ref{thm:sign2} quantitatively match the best known non-vanishing results for Fourier coefficients of half-integral weight forms that are proved using analytic techniques, conditionally under GRH and unconditionally (resp.). In particular, Theorem \ref{thm:sign} gives a different proof that a positive proportion of these coefficients are non-zero, for Hecke forms in the Kohnen space. It should be noted that Ono-Skinner \cite{OS} have shown that there exist $\gg X/\log X$ fundamental discriminants at which these coefficients are non-zero for such forms.\footnote{As discussed above for forms in the Kohnen space the Fourier coefficients are algebraic integers which lie in a number field \cite[Proposition 4.2]{KP} so that the Fundamental Lemma of \cite{OS} applies.} However, their result does not give a quantitative lower bound on the size of the coefficients, whereas the analytic estimates do provide such information, which is crucial for our argument.

% {\tt I'm looking into if anyone has pointed out the argument you mentioned about positive coefficients at integers. If not it might be nice to include. Seems similar to a result of Choie and Kohnen}

%  \marginpar{Strictly speaking Ono Skinner proved their result for elliptic curves. Are we sure it extends to all half-integral weight forms in the Kohnen space?}

 On the other hand, 
 using the result of Ono-Skinner it is not difficult to produce both many positive Fourier coefficients and many negative ones, at \textit{integers}.

 %To see this, let $g \in S_{k+1/2}^+$ be a Hecke cusp form and $f \in S_{2k}$ denote the Shimura lift of $g$. For $p>2$ prime and $d$ a fundamental discriminant with $(-1)^k d>0$
 %\begin{equation} \label{eq:shimura}
 %c(n^2|d|)=c(|d|) \sum_{r|n} \frac{\mu(r) \left(\frac{d}{r} \right)}{\sqrt{r}}\lambda_f\left( \frac{n}{r}\right),
 %\end{equation}
 % where $\lambda_f(n)$ denotes the $nth$ Hecke eigenvalue of $f$ and $\left( \frac{d}{r}\right)$ is the Kronecker symbol (see equation (2) of \cite{KZ}).
 %In particular if $p$ is prime this becomes
%\[
% c(|d|p^2)=c(|d|)\bigg(\lambda_f(p)-\frac{\left(\frac{d}{p} \right)}{\sqrt{p}}\bigg).
%\]
%Since there exists $p$ which depends at most on $k$ such that $\lambda_f(p) < -2/\sqrt{p}$ it follows that if $c(|d|) \neq 0$ then $c(|d|)$ and $c(|d|p^2)$ have opposite signs. 
% By Ono and Skinner's result there are $\gg X/(p^2 \log (X/p^2)) \gg X/\log X$ fundamental discriminants $|d| \le X/p^2$ such that $c(|d|) \neq 0$. So by considering the signs of the Fourier coefficients $c(n)$ at $n=|d| \le X/p^2$ with $c(|d|) \neq 0$ and at $m=|d|p^2 \le X$ we arrive at the following result.

 \begin{theorem} \label{thm:uncondsign}
 Let $k \ge 2$ be an integer and $g \in S_{k+1/2}^+$ be a Hecke cusp form.
Then, for all sufficiently large $X$
\[
\# \{ n \le X : c(n)>0 \} \gg \frac{X}{\log X}
\quad \text{and} \quad
\# \{ n \le X : c(n)<0 \} \gg  \frac{X}{\log X}.
\]
 \end{theorem}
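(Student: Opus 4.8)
The plan is to leverage the Ono--Skinner result, which guarantees $\gg X/\log X$ fundamental discriminants $d$ in a suitable range with $c(|d|) \neq 0$, together with the fact that for Hecke forms in the Kohnen space the multiplicativity of Fourier coefficients at squares lets us flip signs at will. Concretely, recall that for $g \in S_{k+1/2}^+$ a Hecke cusp form one has the Shimura-type relation expressing $c(n m^2)$ in terms of $c(n)$ and the Hecke eigenvalues $\lambda_f$ of the corresponding level $1$ weight $2k$ form $f$; in particular for a prime $p$ not dividing $n$ (and $p>2$),
\[
c(n p^2) = \Big( \lambda_f(p) - \Big(\tfrac{(-1)^k n}{p}\Big) p^{k-1} \cdot p^{-(k-1/2)/2 + \cdots} \Big) c(n),
\]
the precise normalization being as in the Fourier expansion in the introduction. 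The key structural point is only that $c(np^2)/c(n)$ is a fixed real algebraic number depending on $p$ and on $n \bmod p$, and that by varying $p$ over primes in a fixed congruence class we can arrange this ratio to be negative infinitely often (for instance, whenever $\lambda_f(p)$ is small or of a prescribed sign relative to the Legendre symbol term, which happens for a positive density of primes by the Sato--Tate law, or simply because $\lambda_f(p)$ changes sign infinitely often while the other term does not).

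First I would fix, using Ono--Skinner, a single fundamental discriminant $d_0 \in \mathbb{N}^\flat$ with $c(|d_0|) \neq 0$; say $c(|d_0|) > 0$ after possibly replacing the argument below by its mirror image. Then I would run the sign-flipping construction: choose a prime $p_0 \nmid d_0$, $p_0 > 2$, with $c(|d_0| p_0^2)$ of the opposite sign to $c(|d_0|)$ — such a $p_0$ exists because the ratio $c(|d_0| p^2)/c(|d_0|)$ takes both signs as $p$ ranges over primes (indeed over a positive proportion of primes, by Sato--Tate applied to $\lambda_f(p)$). This produces one positive and one negative value. To upgrade to $\gg X/\log X$ of each, I would instead apply Ono--Skinner to get a set $\mathcal{D}$ of $\gg X^{1/2}/\log X$ fundamental discriminants $|d| \le \sqrt{X}$ with $c(|d|) \neq 0$, and for each such $d$ multiply by the squares of primes $p \le (X/|d|)^{1/2}$ in a fixed residue class modulo a small modulus for which the ratio $c(|d| p^2)/c(|d|)$ is negative for a positive proportion of such $p$ (again Sato--Tate, uniformly enough). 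Each $d \in \mathcal{D}$ then contributes $\gg (X/|d|)^{1/2}/\log X$ integers $n = |d| p^2 \le X$ with $c(n) < 0$, and likewise $\gg (X/|d|)^{1/2}/\log X$ with $c(n) > 0$ (using primes in a class where the ratio is positive). Summing over $d \in \mathcal{D}$ — and noting the integers produced are distinct because each has a distinct squarefree part $|d|$ — gives $\gg X/\log X$ of each sign, after a routine dyadic summation $\sum_{d \in \mathcal{D}} (X/|d|)^{1/2} \gg X$.

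The main obstacle is ensuring that the sign of the ratio $c(|d| p^2)/c(|d|)$ can be controlled with enough uniformity in both $p$ and $d$ to keep a positive proportion of primes $p$ in the relevant ranges. This is where a quantitative, uniform form of Sato--Tate (or, more elementarily, the fact that $\lambda_f(p)$ lies in $[-2,2]$ so the sign of $\lambda_f(p) - (\tfrac{\pm|d|}{p}) p^{k-1}\cdot(\text{normalization})$ is governed by the larger term — one must check which term dominates after normalization) is needed. In the normalization of the introduction the coefficient $c(n)$ already has the polynomial growth stripped off, so $c(|d|p^2)/c(|d|)$ is essentially $\lambda_f(p) - \chi_{(-1)^k|d|}(p)\, p^{-1/2}$ up to bounded factors; since $|\lambda_f(p)| \le 2$ this quantity is positive for a positive proportion of primes and negative for a positive proportion of primes by Sato--Tate, uniformly in $d$, which is exactly what the counting argument requires. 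I expect verifying this normalization bookkeeping and the distinctness/disjointness of the produced integers to be the only genuinely nontrivial points; everything else is a short deduction from Ono--Skinner plus an elementary sum.
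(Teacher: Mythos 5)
Your strategy (Ono--Skinner non-vanishing plus a sign flip coming from the Shimura relation $c(|d|p^2)=c(|d|)\bigl(\lambda_f(p)-\chi_d(p)/\sqrt{p}\bigr)$) is the same basic mechanism as the paper's proof, but your quantitative implementation has a genuine gap. In your main construction you take $\gg X^{1/2}/\log X$ discriminants $|d|\le\sqrt X$ and, for each, $\gg (X/|d|)^{1/2}/\log X$ primes, and you assert that $\sum_{d\in\mathcal D}(X/|d|)^{1/2}\gg X$. That inequality is false: since the $|d|$ are distinct positive integers, $\sum_{d\in\mathcal D}|d|^{-1/2}\le\sum_{m\le|\mathcal D|}m^{-1/2}\ll|\mathcal D|^{1/2}\ll X^{1/4}$, so $\sum_{d\in\mathcal D}(X/|d|)^{1/2}\ll X^{3/4}$. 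Consequently your construction produces only $\gg X^{3/4}/(\log X)^{2}$ integers of each sign, which falls short of the claimed $\gg X/\log X$; no rebalancing of the cutoff for $|d|$ against the range of $p$ repairs this, because whenever many primes per discriminant are used the number of available discriminants must shrink correspondingly.

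The fix, which is how the paper argues, is to use a \emph{single} fixed prime: since $\lambda_f(p)<0$ for infinitely many $p$, one can choose $p$ (depending only on $f$, hence on $k$) with $\lambda_f(p)<-2/\sqrt{p}$, and then $\lambda_f(p)-\chi_d(p)/\sqrt{p}<0$ for \emph{every} fundamental discriminant $d$, regardless of the value $\chi_d(p)\in\{0,\pm1\}$ --- no Sato--Tate uniformity in $d$, and no residue-class restriction, is needed. Applying Ono--Skinner up to $X/p^2$ gives $\gg X/\log X$ discriminants $|d|\le X/p^2$ with $c(|d|)\neq 0$, and each pair $\{|d|,\,|d|p^2\}$ of integers $\le X$ carries coefficients of opposite signs; since the pairs are pairwise disjoint this yields $\gg X/\log X$ positive and $\gg X/\log X$ negative coefficients with no density loss. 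You in fact mention this one-prime flip in your first paragraph but only extract a single sign change from it before switching to the lossy many-prime count; keeping the prime fixed throughout is exactly what makes the $X/\log X$ bound come out.
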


 \subsection{Extensions beyond the Kohnen plus space}

Since by now Shimura's correspondence is fairly well-understood (see \cite{BP}) we show in the Appendix that the conclusion of
 Theorems \ref{thm:sign} and \ref{thm:sign2} holds for every half-integral weight Hecke cusp form on $\Gamma_0(4)$\footnote{For $g \notin S_{k+1/2}^+$ it is possible that $c(8n)\mu^2(2n)$ is zero for each $n \in \mathbb N$, in this case we detect sign changes of $\{c(n)\}$ where  $n$ ranges over the set $\{ n \le X : n \text{ is even and }  \mu^2(n)c(n) \neq 0 \}$.}.
%  for which $c(8n)\mu^2(2n)$ is not identically zero (the proof given in the Appendix describes precisely the forms for which $c(8n)\mu^2(2n)= 0$ for each $n$). 
%  More generally for $k \ge 2$, we can provd analogs of Theorems \ref{thm:sign} and \ref{thm:sign2} for weight $k+\tfrac12$ Hecke cusp forms on $\Gamma_0(4)$ provided that $c(8n)\mu^2(2n)$ is not identically zero.  
 Additionally, we can prove analogs of Theorems \ref{thm:sign} and \ref{thm:sign2} which hold for weight $k+\tfrac12$  ($k \ge 2$) cusp forms $g$ of level $4N$ with $N$ odd and square-free provided that $g$ corresponds (through Shimura's correspondence) to an integral weight newform. The necessary modifications to our argument and precise statements of the results are in the Appendix.
%  \marginpar{Added ``(through ...)''}
%  {\tt made some changes here to reflect changes to the appendix}

%  can be extended to a more general class of half-integral weight forms $g$ provided that the following Hypothesis is imposed on $g$. 

%  \begin{hypothesis} \label{hyp1}
%   Let $g$ be a half-integral weight Hecke cusp form of weight $k + \tfrac 12$ and level $4 N$ with $N$ odd, square-free and $k \geq 2$ integer. Denote by $\omega_{g}(p)$ the Hecke eigenvalue of $g$ at $p^2$. Suppose that for every Hecke cusp form $g'$ of weight $k + \tfrac 12$ and level $D$ with $D | N$ it holds that if $\omega_g(p) = \omega_{g'}(p)$ for all $p \nmid 2 N$ then $g'$ is a constant multiple of $g$. 
%  \end{hypothesis}

%  This Hypothesis postulates that $g$ satisfies a form of a ``multiplicity one theorem''. We note that there are example of Hecke cusp forms that \textit{do not} obey Hypothesis \ref{hyp1}. \marginpar{we should find examples -- that might hard to do numerically... in any case it would be good to clarify the assertions made here} Moreover any form $g$ that obeys Hypothesis \ref{hyp1} can be suitably normalized so that all of its coefficients are real. Conversely it is not necessarily possible to normalize in this way forms $g$ that do not obey Hypothesis \ref{hyp1}.

%  Both Theorem \ref{thm:sign} and \ref{thm:sign2} can be extended to forms for which Hypothesis \ref{hyp1} holds. We sketch the few modifications that are necessary to the proofs in the Appendix.

\subsection{Numerical examples} 

To illustrate our results above with a concrete example consider the following weight $\frac{13}{2}$ Hecke cusp form 
\[
\delta(z)=\frac{60}{2 \pi i} \left(2 G_4(4z)\theta'(z)-G_4'(4z)\theta(z) \right)=\sum_{\substack{n \ge 1 \\n \equiv 0,1 \pamod{4}}} \alpha_{\delta}(n) e(nz),
\]
where
\[
G_k(z)=\frac{1}{2} \cdot \zeta(1 - k) + \sum_{n \ge 1} \sigma_{k-1}(n)e(nz) \quad \text{ and} \quad \theta(z)=\sum_{n \in \mathbb Z} e(n^2z).
\]

The modular form $\delta(z)$ corresponds to the modular discriminant $\Delta(z)$ under the Shimura lift.
Assuming GRH for $L(\tfrac12, \Delta \otimes \chi_d)$ for every fundamental discriminant $d$, Theorem \ref{thm:sign} implies that there is a positive proportion of sign changes of $\alpha_{\delta}(n)$ along the sequence of fundamental discriminants of the form $8d$.
In fact numerical evidence below suggests that the Fourier coefficients of $\delta(z)$ change sign approximately one half of the time. Given a subset $\mathcal{L}$, let $\mathcal{S}_{\delta, \mathcal{L}}(X)$ denote the number of sign changes of $\alpha_{\delta}(n)$ along $n \in \mathcal{L} \cap [1, X]$, and denote by $\mathcal{N}_{\delta, \mathcal{L}}(X)$ the cardinality of $\mathcal{L} \cap [1, X]$. We then have the following numerical data. 

\vspace{.1 in}
\begin{tabular}{ |p{2.4cm}||p{2.1cm}|p{2.1cm}|p{2.1cm}|p{2.1cm}|p{2.1cm}|  }
 \hline
$X$ & $2\times 10^5$ & $2 \times 10^6$ & $2 \times 10^7$ & $2 \times 10^8 $ & $2 \times 10^9 $ \\
 \hline
 $\mathcal S_{\delta,\mathbb N}(X)$   &  $50291$&   $501 163$ & $5000867$ & $50000368$ & $500027782$ \\ 
 $ \frac{1}{\mathcal N_{\delta}(X) } \mathcal S_{\delta, \mathbb N}(X)$ &  $0.502915\ldots$ & $0.501163\ldots$ & $0.500086\ldots$ & $0.500003\ldots$ & $0.500027\ldots$\\
 \hline
\end{tabular}
\vspace{.1 in}

If we restrict to $\mathbb{N}^{\flat} = \{ 8 d : d \text{ odd, square-free}\}$ then we find the following data. 

\vspace{.1 in}
\begin{tabular}{ |p{2.4cm}||p{2.1cm}|p{2.1cm}|p{2.1cm}|p{2.1cm}|p{2.1cm}|p{2.1cm}|  }
 \hline
$X$ & $2\times 10^5$ & $2 \times 10^6$ & $2 \times 10^7$ & $2 \times 10^8$ & $2 \times 10^9$\\
 \hline
 $\mathcal S_{\delta, \mathbb N^{\flat}}(X)$   & $5049$&   $50734$ & $506589$ & $5065686$ & $50663938$\\ 
 $ \frac{1}{\mathcal N^{\flat}(X) } \mathcal S_{\delta,\mathbb N^{\flat}}(X)$ &  $0.498223\ldots$ & $0.500740\ldots$ & $0.499980\ldots$ & $0.499963\ldots$ & $0.500033\ldots$ \\
 \hline
\end{tabular}
\vspace{.1 in}

%The data seems to suggest that the signs are substantially more equally distributed along the integers than along fundamental discriminants in $\mathbb{N}^{\flat}$. 

% \begin{theorem}
% Assume GRH. Then
% \[
% \# \{ n \le x : a_{n} \ge \varepsilon \text{ and } a_{n+1} \le - \varepsilon \} \gg x.
% \]
% \end{theorem}
% Note that in the Kohnen space the coefficients $c(n)=0$ unless $(-1)^kn \equiv 0,1 \pmod 4$. Note that for $\Delta \equiv 0,1 \pmod 4$ we may write $\Delta=f^2 D$ where $D=\tmop{Disc}(\mathbb Q(\sqrt{\Delta}))$ so that since $n \ge 1$ and $(-1)^k n =f^2 D $ we must have $(-1)^k D >0$.
\subsection{Main Estimates} 

% The basic strategy aims to show that there exists a subset of $[X,2X]$ of relatively large measure such that
% \[
% \bigg| \sum_{x \le n \le x+y} a_g(n)\bigg| < \sum_{x \le n \le x+y} |a_g(n)|
% \]
% which implies that the interval $[x,x+y]$ contains a sign change of the sequence $a_g(n)$. We prove that the above holds on  a subset of $[X,2X]$ of measure $\gg X^{1-\epsilon}$.

The main results follow from the following three propositions. The first two of the propositions allow us to control the size of $c(\cdot)$ by introducing a mollifier $M(\cdot;\cdot)$ which is defined in \eqref{eq:mollifierdef}. In our application the specific shape of the mollifier is crucial to the success of the method.
We have constructed this mollifier to counteract the large values of
\[
\exp\left(\sum_{p \le |d|^{\varepsilon}} \frac{\lambda_f(p)}{\sqrt{p}} \chi_d(p) \right)
\]
that contribute to the bulk of the moments of $L(\tfrac12, f\otimes \chi_d)$. Essentially, we are mollifying the $L$-function through an Euler product, as opposed to traditional methods which use a Dirichlet series. This approach was sparked by innovations in understanding of the moments of $L$-functions, such as the works of Soundararajan \cite{SoundMoments}, Harper \cite{Harper}, and Radziwi\l \l \,-Soundararajan \cite{RadziwillSound}.

\begin{proposition} \label{prop:secondmoment}
Assume GRH. Let $g \in S_{k+1/2}^+$ be a Hecke cusp form. Also, let $M(\cdot;\cdot)$ be as defined in \eqref{eq:mollifierdef}. Then
\[
\sum_{\substack{ n \le X \\ 2n \text{ is } \square-\text{free}}} |c(8n)|^4 M( (-1)^k 8n; \tfrac 12 )^4 \ll X.
\]
\end{proposition}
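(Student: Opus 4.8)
The plan is to reduce the fourth-moment bound for the mollified Fourier coefficients to a sharp fourth-moment bound for the central $L$-value $L(\tfrac12, f\otimes\chi_d)$ times a fourth power of the mollifier, and then to evaluate the latter by the standard GRH machinery for moments of quadratic-twist $L$-functions. First I would invoke Waldspurger's formula \eqref{eq:waldspurger}: since $c(8n)$ is real and $8n$ is a fundamental discriminant (with $(-1)^k 8n>0$ after adjusting for the parity convention, which is why the set $\mathbb N^\flat$ consisting of $8n$ with $n>0$ odd squarefree is used), we have $|c(8n)|^2 = \kappa_g\, L(\tfrac12, f\otimes\chi_{(-1)^k 8n})$ for an explicit constant $\kappa_g>0$ depending only on $g$. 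Hence $|c(8n)|^4 = \kappa_g^2\, L(\tfrac12,f\otimes\chi_{(-1)^k 8n})^2$, and the sum in question is
\[
\kappa_g^2 \sum_{\substack{n\le X\\ 2n\ \square\text{-free}}} L(\tfrac12, f\otimes\chi_{(-1)^k 8n})^2\, M((-1)^k 8n;\tfrac12)^4,
\]
so the proposition is equivalent to a bound $\ll X$ for this mollified second moment of $L(\tfrac12,f\otimes\chi_d)^2$ over the relevant family of $d$.

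Next I would use the GRH-conditional upper bound on $\log L(\tfrac12, f\otimes\chi_d)$ in the spirit of Soundararajan's ``Moments and the mean value problem'' and its refinement by Harper: under GRH one has, for any $x\le |d|$ and with a main term truncated at primes $\le x$,
\[
\log L(\tfrac12, f\otimes\chi_d) \le \sum_{p\le x} \frac{\lambda_f(p)\chi_d(p)}{\sqrt p}\,\frac{\log(x/p)}{\log x} + O\!\Big(\frac{\log|d|}{\log x}\Big) + (\text{small prime-power terms}).
\]
The mollifier $M(d;\tfrac12)$ defined in \eqref{eq:mollifierdef} is, by design, a truncated Euler product (or a short Dirichlet polynomial approximating $\exp(-\sum_{p\le|d|^\varepsilon}\lambda_f(p)\chi_d(p)/\sqrt p)$), so $L(\tfrac12,f\otimes\chi_d)\,M(d;\tfrac12)^2$ is pointwise bounded by $\exp$ of a short character sum over primes in a range $(|d|^\varepsilon, |d|^{o(1)}]$ plus a bounded contribution. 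Raising to the square and summing, one is left to estimate
\[
\sum_{n\le X} \Big(\text{short Euler factor}\Big)^{\!4}
\]
which, after expanding, becomes a sum of $\chi_{8n}(\ell)$ over squarefree-supported $\ell$ built from primes in $(X^\varepsilon, X^{o(1)}]$. I would split the off-diagonal ($\ell$ not a perfect square) from the diagonal: the off-diagonal terms are handled by Poisson summation / the large sieve for quadratic characters (Heath-Brown), giving a negligible contribution because $\ell$ is a product of large primes, while the diagonal terms contribute $\ll X\cdot \prod_{X^\varepsilon<p\le X^{o(1)}}(1+O(1/p)) = X^{1+o(1)}$, and in fact $\ll X$ once the mollifier is chosen with the correct normalizing constant so that the Euler product over the mollified range converges.

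The main obstacle I anticipate is the precise interplay between the length of the mollifier and the strength of the pointwise GRH bound: one must choose the mollifier length $y=|d|^{\varepsilon}$ (or a function of $X$) small enough that the off-diagonal error from the large sieve is truly negligible (lengths up to $X^{1/4-\delta}$ or so), yet large enough that the remaining unmollified prime sum $\sum_{y<p\le x}\lambda_f(p)\chi_d(p)/\sqrt p$ is small enough in the exponent that $\exp$ of its fourth moment is $O(1)$ on average — this is exactly the Soundararajan--Harper balancing act, and it forces the genuinely sharp (rather than log-losing) bound. A secondary technical point is bookkeeping the squarefree condition $2n$ squarefree (a standard Möbius sieve, inserting $\sum_{d^2\mid 2n}\mu(d)$ and truncating, costs nothing) and correctly tracking the Euler factors at the ramified primes $2$ and $p\mid$ (conductor of $f$, here trivial since $f$ has level $1$), which only affect constants. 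I expect no difficulty with the Hecke relations $\lambda_f(p)^2 = \lambda_f(p^2)+1$, which are needed to compute the diagonal contribution and which show the relevant Euler product converges.
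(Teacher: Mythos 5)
Your opening reduction is exactly the paper's: by \eqref{eq:waldspurger} the sum is $\ll$ the mollified moment $\sum_{d} L(\tfrac12,f\otimes\chi_d)^{2}M(d;\tfrac12)^{4}$ over fundamental discriminants $|d|\le 8X$, which the paper then bounds by quoting Proposition \ref{prop:moments} with $l=\kappa=2$; and your general strategy for that moment (Chandee/Soundararajan's GRH bound for $\log L$ plus a mollifier designed to cancel the prime sum, in Harper's style) is also the paper's. The gap is in your central step. You claim that $L(\tfrac12,f\otimes\chi_d)\,M(d;\tfrac12)^{2}$ is \emph{pointwise} bounded by the exponential of a short prime sum plus a bounded quantity, and then reduce to a diagonal/off-diagonal character-sum computation. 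That pointwise bound is not available: $M(d;\tfrac12)$ is not an Euler product but a product of blocks $M_j$ truncated both in prime support ($p\in I_j$) and in degree ($\Omega(n)\le\ell_j$), and the identification of $M$ with $\exp\bigl(-\tfrac12\sum_p \lambda_f(p)w(p)\chi_d(p)/\sqrt{p}\bigr)$ via $e^{t}\le(1+e^{-\ell/2})E_{\ell}(t)$ is valid only on the event that every block sum satisfies $|P_{I_j}(d)|\le \ell_j/(le^{2})$. On the complementary set $M(d;\tfrac12)^{4}$ can be as large as a fixed positive power of $x$ (see \eqref{eq:mollifierbd}) and $L$ as large as $\exp(O(\log x/\log\log x))$, so these $d$ cannot be discarded; showing their total contribution is $\ll X$ — and not merely $X^{1+o(1)}$, which the H\"older step in Lemma \ref{lem:lowerbdmt} could not absorb — is the real content of the proof, and your sketch only gestures at it ("the balancing act") without supplying the mechanism. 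Relatedly, "expand the exponential of the leftover prime sum and average" does not literally parse, because the exponential is not a Dirichlet polynomial and its truncation is legitimate only on the good set you have already used.

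Concretely, the paper's machinery for the bad set is Proposition \ref{prop:harper} together with Lemma \ref{lem:avgbds}: for each $d$ one takes the largest $j$ with all blocks up to $I_j$ small, applies the GRH bound with the shorter cutoff $x^{\theta_j}$ (paying $\exp(3l/\theta_j)$), and detects the rarity of failure at $I_{j+1}$ by inserting the even power $\bigl(e^{2}lP_{I_{j+1}}(d)/\ell_{j+1}\bigr)^{s_{j+1}}$ with $s_{j+1}\asymp 1/\theta_{j+1}$; the degree truncations in each block are precisely what keep all resulting Dirichlet polynomials shorter than $x^{1/2}$, so that after Poisson summation against a majorant with compactly supported Fourier transform (Lemma \ref{lem:avg}) the non-square terms vanish identically — no quadratic large sieve is invoked, and indeed Heath-Brown's bound carries an $X^{\varepsilon}$ loss that would destroy the sharp constant you yourself note is indispensable. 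The combinatorial estimates of Lemmas \ref{lem:diag} and \ref{lem:mild} then beat the $\exp(3l/\theta_j)$ loss, and the very first exceptional event ($|P_{I_0}(d)|$ large) requires yet another device: Cauchy--Schwarz against the unmollified moment of $L(\tfrac12,f\otimes\chi_d)^{2l}$ bounded by Soundararajan's method, compensated by the superpolynomially small frequency of that event. Without these steps (or substitutes for them) your argument establishes the bound only on the typical set of discriminants, which is not enough for the proposition.
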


\begin{proposition} \label{prop:firstmoment}
Let $g \in S_{k+1/2}^+$ be a Hecke cusp form. Also, let $M(\cdot;\cdot)$ be as defined in \eqref{eq:mollifierdef}. Then
\[
\sum_{\substack{n \le X \\ 2n \text{ is } \square-\text{free}}} |c(8n)|^2 M((-1)^k8 n; \tfrac{1}{2})^2 \asymp X.
\]
\end{proposition}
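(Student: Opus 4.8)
The plan is to treat the left-hand side as a mollified first moment of $L(\tfrac12,f\otimes\chi_d)$ over the family $d=(-1)^k 8n$, and to show it equals $\mathcal{C}_g X+O(X^{1-\delta})$ for an explicit constant $0<\mathcal{C}_g<\infty$ and some $\delta>0$; no Riemann Hypothesis is needed. First I would invoke the Kohnen--Zagier formula \eqref{eq:waldspurger}: when $2n$ is square-free, $d:=(-1)^k 8n$ is a fundamental discriminant with $(-1)^k d>0$, so $|c(8n)|^2=C_g\,L(\tfrac12,f\otimes\chi_d)$ with $C_g=(k-1)!\,\pi^{-k}\langle g,g\rangle/\langle f,f\rangle>0$. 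Under the sign condition $(-1)^k d>0$ the root number of $L(s,f\otimes\chi_d)$ is $+1$ (consistent with the central value being $\ge 0$), and since $f$ has level one the conductor is $\asymp d^2$; the approximate functional equation therefore gives $L(\tfrac12,f\otimes\chi_d)=2\sum_m \lambda_f(m)\chi_d(m)m^{-1/2}V_k(m/8n)$ with $V_k$ smooth, $V_k(0)=1$, $V_k(y)\ll_A y^{-A}$, so the effective length is $m\ll X^{1+\varepsilon}$. Writing $M(d;\tfrac12)^2=\sum_j b_f(j)\chi_d(j)$ as the short Dirichlet polynomial it is by \eqref{eq:mollifierdef} (supported on smooth $j$, with only $X^{o(1)}$ terms and $|b_f(j)|\ll j^{-1/2+o(1)}$), the quantity to estimate equals $2C_g$ times
\[
\sum_{m,\,j}\frac{\lambda_f(m)\,b_f(j)}{\sqrt m}\sum_{\substack{n\le X\\ 2n\ \square\text{-free}}}\chi_{(-1)^k 8n}(mj)\,V_k\!\Big(\frac{m}{8n}\Big),
\]
after separating the $n$- and $m$-dependence in $V_k$ by Mellin inversion and a routine smoothing of the sharp cutoff $n\le X$.

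Next I would split according to whether $mj$ is a perfect square. On the diagonal $mj=\square$, writing $mj=\ell^2$ gives $\chi_{(-1)^k8n}(\ell^2)=\mathbf 1_{(\ell,2n)=1}$, and the elementary count of square-free $2n$ coprime to $\ell$ yields a contribution $\rho(\ell)X+O(X^{1/2+o(1)}\ell^{o(1)})$ for an explicit multiplicative $\rho$; summing over $\ell,m,j$ the diagonal is $\mathcal{C}_g X+O(X^{1/2+\varepsilon})$, where $\mathcal{C}_g$ is an Euler product. The decisive point — and the reason for the precise shape of $M$ in \eqref{eq:mollifierdef} — is that the local factor of $\mathcal{C}_g$ at an odd prime $p$ is the average of $L_p(\tfrac12,f\otimes\chi_d)\,M_p(d;\tfrac12)^2$ over $\chi_d(p)=\pm 1$ with equal weight; since $M_p(d;\tfrac12)$ is built to approximate $\exp(-\lambda_f(p)\chi_d(p)p^{-1/2})\approx 1-\lambda_f(p)\chi_d(p)p^{-1/2}$, which is the main term of $L_p(\tfrac12,f\otimes\chi_d)=(1-\lambda_f(p)\chi_d(p)p^{-1/2}+p^{-1})^{-1}$, the terms of order $p^{-1/2}$ cancel in the average and the local factor equals $1+O(\lambda_f(p)^2/p)=1+O(p^{-1})$ by Deligne's bound; it is moreover strictly positive, because $L_p(\tfrac12,f\otimes\chi_d)=(1\mp\lambda_f(p)p^{-1/2}+p^{-1})^{-1}>0$ (as $1-2t+t^2=(1-t)^2>0$ for $t=p^{-1/2}$, using $|\lambda_f(p)|\le 2$) and $M_p(d;\tfrac12)^2\ge 0$. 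Hence the product converges to a finite positive constant and $\mathcal{C}_g X$ is genuinely of order $X$.

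The main obstacle is the off-diagonal $mj\ne\square$, which must be shown to be $O(X^{1-\delta})$. Writing $mj=ab^2$ with $a>1$ square-free, the inner sum over $n$ involves $\chi_{(-1)^k 8n}(a)\,\mathbf 1_{(b,2n)=1}$, which as a function of $n$ is a non-principal real character of modulus $O(a)$ once the $2$-adic piece is separated off. I would detect the square-free condition with $\mu$ and apply Poisson summation in $n$: the zero frequency vanishes since the complete Gauss sum $\sum_t\left(\tfrac{t}{a}\right)$ is zero for a non-square $a>1$, and the nonzero frequencies are controlled by $\left|\sum_{t\bmod a}\left(\tfrac{t}{a}\right)e(tk/a)\right|\ll\sqrt a\,(k,a)^{1/2}$ together with the rapid decay of the transform of the smooth cutoff, which restricts to $|k|\ll aX^{\varepsilon}/X$ and so makes the total negligible unless $a\gg X^{1-\varepsilon}$; for such $a$ one sums the tail using $\sum_{mj=ab^2}|\lambda_f(m)b_f(j)|\ll(ab^2)^{o(1)}$ and Heath--Brown's quadratic large sieve over the square-free moduli $8n\le X$ to recover a power saving. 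Combining the diagonal main term $\mathcal{C}_g X$ with $\mathcal{C}_g>0$ and the off-diagonal bound $O(X^{1-\delta})$ yields both $\ll X$ and $\gg X$, which is the claim. The delicate parts are the bookkeeping of the $2$-adic and square-free conditions inside the Poisson step for the twisted family $\{8n\}$, and arranging that the large sieve, applied to the comparatively long $L$-part, still beats $X$ once the mollifier coefficients are folded in; the shortness and smooth support of $M$ in \eqref{eq:mollifierdef} are exactly what make that possible.
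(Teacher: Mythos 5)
Your overall strategy (Kohnen--Zagier, approximate functional equation, Poisson summation over the family $d=(-1)^k8n$, diagonal versus off-diagonal) is the same one the paper implements via its twisted first moment, Proposition \ref{prop:twisted}, but two steps in your outline do not hold up. First, the diagonal bookkeeping: the local factor you compute is not $1+O(p^{-1})$ with a convergent product. After averaging, the factor at $p$ is $1+\tfrac{a(p;J)^2}{2p}-\tfrac{\lambda_f(p)a(p;J)}{p}+O(p^{-2})\approx 1-\tfrac{\lambda_f(p)^2}{2p}$, and since $\sum_{p\le y}\lambda_f(p)^2/p\sim\log\log y$ (Rankin--Selberg), the product over the mollifier's support is $\asymp(\log x)^{-1/2}$, not a positive constant; it is exactly the $(\log x)^{1/(2\kappa)}$ normalization built into \eqref{eq:mollifierdef} (giving $(\log x)^{1/2}$ for $M^2$) that restores $\asymp X$. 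As written, your claim that ``the product converges to a finite positive constant'' either ignores that prefactor or double-counts it, and the lower bound also needs positivity of each local factor for small $p$, which the paper arranges by taking $I_0=(c,x^{\theta_0}]$ with $c$ large. This is fixable, but it is the heart of why the mollifier has its specific shape, so it cannot be waved through.

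The more serious gap is the off-diagonal. Your Poisson step correctly kills all $a\ll X^{1-\varepsilon}$ (zero frequency vanishes for non-square $a$), but the modulus $a$ ranges up to $mj\ll X^{1+2\delta_0+\varepsilon}$ (AFE length times mollifier length), and for $X^{1-\varepsilon}\ll a\ll X^{1+2\delta_0}$ the tool you propose cannot close: after Cauchy--Schwarz in the family, Heath--Brown's quadratic large sieve with moduli $8n\le X$ against coefficients of size $a^{-1/2+o(1)}$ on a range of length $A$ gives at best $\ll X^{1/2}(X+A)^{1/2+\varepsilon}$, which is $\gg X$ for $A\gtrsim X$ --- no power saving, indeed not even $o(X)$, so neither the asymptotic (hence the lower bound) nor the clean upper bound $\ll X$ follows. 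The missing idea is the one the paper uses for its $\ell\neq0$ terms: write the long $m$-sum as $L(1+s,f\otimes\psi_{4\ell})$ times an absolutely convergent Euler factor $R$, continue via the Mellin transform $\breve F$ of Lemma \ref{lem:continuation}, shift to $\operatorname{Re}(s)=-\tfrac12+\varepsilon$, and invoke the convexity bound for $L(\tfrac12+\varepsilon+it,f\otimes\psi_{4\ell})$; this exploits cancellation in the $m$-variable (the small size of the dual frequency $\ell$), which a mean-square bound over the family cannot supply. Heath--Brown's large sieve does appear in the paper, but only to control the tail $a>Y$ of the square-free extraction (via an average of $|L|$ over the family), not the long off-diagonal modulus. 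Without this $L$-function/contour-shift input (or an equivalent), your error term does not beat the main term.
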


The other key ingredient of our results is an estimate for sums of Fourier coefficients summed against a short Dirichlet polynomial over short intervals. This is proved through estimates for shifted convolution sums of half-integral weight forms.
 
\begin{proposition} \label{mainprop}
Let $g$ be a cusp form of weight $k+\tfrac12$ on $\Gamma_0(4)$.
  Let $(\beta(n))_{n \geq 1}$ be complex coefficients such that $|\beta(n)| \ll_{\varepsilon} n^{\frac12+\varepsilon}$ for all $\varepsilon > 0$. 
  Let
  $$
  M((-1)^k n) := \sum_{m \leq M} \frac{\beta(m) \chi_{(-1)^k n}(m)}{\sqrt{m}}
  $$
  where $\chi_{(-1)^k n}(m)$ denotes the Kronecker symbol. 
  Uniformly for $1 \leq y, M \leq X^{\frac{1}{1092}}$
  \begin{align*}
  \sum_{X \leq x \leq 2X} \Big | & \sum_{\substack{x \leq 8n \leq x + y \\ 2n \text{ is } \square-\text{free}}} c(8n) M((-1)^k 8 n) \Big | \\ & \ll_{k} X \sqrt{y} \cdot \Big ( \frac{1}{X^{k + \frac 12}} \sum_{\substack{2n \text{ is } \square-\text{free}}} |c(8 n) M((-1)^k 8 n)|^2 \cdot n^{k - \frac 12} e^{- \frac{4\pi n}{X}} \Big )^{1/2} + X^{1 - \frac{1}{2184} + \varepsilon}.
  \end{align*}
\end{proposition}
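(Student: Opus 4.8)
The plan is to pass from the $L^{1}$-average over $x$ to an $L^{2}$-average via Cauchy--Schwarz, open the square, and isolate the diagonal (which produces the main term) from a shifted convolution sum (which produces the error term $X^{1-1/2184+\varepsilon}$). Write $S(x):=\sum_{x\le 8n\le x+y,\ 2n\ \square-\text{free}}c(8n)M((-1)^{k}8n)$. Since there are $\ll X$ values of $x$, Cauchy--Schwarz gives $\sum_{X\le x\le 2X}|S(x)|\ll X^{1/2}\big(\sum_{X\le x\le 2X}|S(x)|^{2}\big)^{1/2}$. Expanding $|S(x)|^{2}$ and summing over $x$ first, a pair $(n_{1},n_{2})$ picks up a weight equal to the number of admissible $x$, which is $\ll y$ and vanishes unless $8|n_{1}-n_{2}|\le y$. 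Hence, with $h:=n_{1}-n_{2}$,
\[
\sum_{X\le x\le 2X}|S(x)|^{2}\ll y\,D+|\mathcal{O}|,\qquad D:=\sum_{\substack{8n\asymp X\\ 2n\ \square-\text{free}}}|c(8n)M((-1)^{k}8n)|^{2},
\]
where $\mathcal{O}$ collects the off-diagonal terms $1\le 8|h|\le y$.

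For the diagonal I would note that for $8n\in[X,2X]$ one has $1\ll_{k}(n/X)^{k-1/2}$ and $e^{-4\pi n/X}\gg 1$, so each summand of $D$ is $\ll_{k}X^{-(k-1/2)}|c(8n)M((-1)^{k}8n)|^{2}n^{k-1/2}e^{-4\pi n/X}$; summing and extending to all $n\ge 1$ gives $D\ll_{k}X\cdot Q$, where $Q$ is the bracketed quantity in the statement. Thus $yD\ll_{k}yXQ$, and this term contributes $\ll_{k}X\sqrt{y}\,\sqrt{Q}$ after Cauchy--Schwarz, which is precisely the main term. It remains to prove $\mathcal{O}\ll X^{1-1/1092+\varepsilon}$, since then $X^{1/2}|\mathcal{O}|^{1/2}\ll X^{1-1/2184+\varepsilon}$.

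For the off-diagonal I would expand the mollifiers and use that, since $(-1)^{k}8n$ is even, $\chi_{(-1)^{k}8n}(m)$ vanishes for even $m$ and, for odd $m$, equals a constant depending only on $m$ times the Jacobi symbol $\big(\tfrac{n}{m}\big)$, which is a character modulo $m$ in $n$. Consequently $M((-1)^{k}8n_{1})\overline{M((-1)^{k}8n_{2})}$, written with $n_{1}=n+h$, $n_{2}=n$, is a linear combination over $m_{1},m_{2}\le M$ of functions of $n$ that are constant on residue classes modulo $m_{1}m_{2}$; after removing the square-free condition by M\"obius and smoothing the sharp cutoffs $\mathbf{1}[x\le 8n\le x+y]$ and $\mathbf{1}[8n\asymp X]$ (each costing a short extra summation and a factor $X^{\varepsilon}$), $\mathcal{O}$ becomes a sum of $O(X^{\varepsilon})$ pieces, each a coefficient of size $\ll yX^{\varepsilon}$ times
\[
\sum_{n\equiv a\ (q)}c(8(n+h))\,\overline{c(8n)}\,\psi\!\Big(\frac{n}{X}\Big),\qquad 1\le h\le \tfrac{y}{8},\quad q\le M^{2},\quad\psi\in C_{c}^{\infty}.
\]
These are shifted convolution sums for $g$ restricted to an arithmetic progression, and the key input is a power-saving estimate $\ll_{k}X^{1-\delta}(q(1+h))^{A}$ for each of them, with fixed $\delta>0$, $A>0$; this can be obtained by unfolding against a Poincar\'e series and using the spectral theory of $\Gamma_{0}(4)$ (or by the $\delta$-method of Duke--Friedlander--Iwaniec, or Jutila's variant of the circle method), the crucial feature being the square-root cancellation of the Sali\'e-type Kloosterman sums attached to half-integral weight forms. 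Summing over $h$, $m_{1}$, $m_{2}$, and the $O(X^{\varepsilon})$ auxiliary parameters, all of which range over intervals of length $\ll X^{1/1092}$ (so $q\ll X^{2/1092}$), yields $\mathcal{O}\ll X^{1-\delta+O(1/1092)+\varepsilon}$, which is $\ll X^{1-1/1092+\varepsilon}$ as soon as $1092$ is chosen large enough in terms of $\delta$ and $A$ — this is how the constants $1092$ and $2184=2\cdot1092$ get pinned down.

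The main obstacle is this shifted convolution estimate: one needs a genuine power saving in $X$ \emph{together with} explicit polynomial control in the shift $h$ and the modulus $q$ of the twist, uniformly over the ranges above. This is the only place where one must engage with the fine structure of half-integral weight forms rather than with soft inequalities; all the remaining ingredients — the Cauchy--Schwarz step, the diagonal computation, the removal of the square-free condition, the smoothing of the cutoffs, and the final counting — are routine.
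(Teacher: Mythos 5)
Your skeleton is the one the paper uses: Cauchy--Schwarz in $x$, the diagonal pairs producing exactly the stated main term after comparing the sharp cutoff with the weight $n^{k-\frac12}e^{-4\pi n/X}$, and the off-diagonal reduced to shifted convolution sums of $c(\cdot)$ along arithmetic progressions, with a power-saving shifted-convolution estimate carrying polynomial uniformity in the shift and the modulus as the decisive input. That input is the paper's Proposition \ref{scpprop}, proved exactly along the route you name as one option (Jutila's variant of the circle method, Lemma \ref{le:circle}, combined with the explicit modularity computation of Lemma \ref{le:modularity} and the Weil bound --- the exponential sums that actually appear are ordinary Kloosterman sums to prime modulus, not Sali\'e sums), uniformly for moduli $\Delta\le X^{1/52}$ and shifts $0<|h|<X^{1/2}$. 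You correctly flag this as the main obstacle, but in your write-up it remains an assumption rather than a proof.

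The genuine gap is in the bookkeeping of the square-free condition. Removing $\mu^2(2n_1)\mu^2(2n_2)$ by M\"obius introduces conditions $d_1^2\mid n$ and $d_2^2\mid n+h$ with $d_1,d_2$ running up to $\sqrt{X}$, and these squares enter the modulus of the progression; so your claim that the off-diagonal splits into $O(X^{\varepsilon})$ pieces of modulus $q\le M^2$, with all auxiliary parameters confined to ranges of length $\ll X^{1/1092}$, fails as stated. One must truncate the M\"obius sums at a parameter $Y$, handle the tail $d>Y$ separately --- the paper does this using $|c(d^2n)|\ll_{\varepsilon} d^{\varepsilon}|c(n)|$, which comes from \eqref{eq:shimura} and Deligne, together with \eqref{eq:Parseval}; a purely soft Cauchy--Schwarz in the inner sum gives only a divergent $\sum_{d>Y}1/d$, so some such input (or a divisor-bound rearrangement) is genuinely needed and forces $Y$ to be a positive power of $X$ times $yM$ --- and then the retained head contributes progressions to moduli as large as $\asymp Y^4M^2$. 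It is precisely the constraint $4Y^4M^2\le X^{1/52}$ from Proposition \ref{scpprop}, together with $Y\approx yX^{\delta}M$ and the requirement $y^2M^2Y^2X^{1-\frac{1}{104}}\le X^{1-\delta/2}$, that pins down $\delta=\frac{1}{1092}$ and the exponent $\frac{1}{2184}$; your account, in which the moduli stay at $O(M^2)$ and $1092$ is simply ``chosen large enough,'' misdescribes where the numerology comes from. With the truncation, the tail estimate, and the enlarged moduli inserted, your plan becomes the paper's proof; without them the off-diagonal step does not go through in the stated ranges.
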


In particular, Proposition \ref{mainprop} holds for $M(\cdot)=M(\cdot;\tfrac12)$ as defined in \eqref{eq:mollifierdef}.

\section{The Proof of Theorem \ref{thm:sign}}

The basic method of proof follows a straightforward approach. Observe that since $M((-1)^k n; \tfrac{1}{2})>0$ (see the discussion before and after \eqref{eq:mollifierdef}), if 
\[
\bigg|\sum_{\substack{x \le 8n \le x+y\\ 2n \text{  is  } \square-\text{free}}} c(8n) M((-1)^k 8n; \tfrac{1}{2}) \bigg| < \sum_{\substack{x \le 8n \le x+y\\ 2n \text{  is  } \square-\text{free}}}
|c(8n)| M((-1)^k 8n; \tfrac{1}{2})
\]
then the sequence $\{ \mu^2(2n)c(8n) \}$ must have at least one sign change in the interval $[x, x+y]$. To analyze the sums above we use a direct approach, which was developed in \cite{MatomakiRadziwill}, where sign changes of integral weight forms was studied. The key input is Proposition \ref{mainprop}.

\begin{lemma} \label{lem:upperbdmt} 
Let $\Lambda\ge 1$.
There exists $\delta>0$ such that for $2 \le y \le X^{\delta}$ we have for all but at most $\ll X/\Lambda$
integers $X \le x \le 2X$ that
\[
\bigg| \sum_{\substack{ x \le 8n \le x+y \\ 2n \text{ is } \square\text{-free}}} c(8n) M((-1)^k 8n; \tfrac{1}{2}) \bigg| \le \Lambda \sqrt{y}.
\]
\end{lemma}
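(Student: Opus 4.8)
The plan is to deduce Lemma~\ref{lem:upperbdmt} from Proposition~\ref{mainprop} by a straightforward second-moment (Chebyshev) argument, exactly in the spirit of the method of Matom\"aki--Radziwi\l\l. First I would apply Proposition~\ref{mainprop} with $\beta(m)$ equal to the coefficients of the mollifier $M(\cdot;\tfrac12)$ from \eqref{eq:mollifierdef}; since those coefficients satisfy the required bound $|\beta(m)|\ll_\varepsilon m^{1/2+\varepsilon}$ and $M$ is a polynomial of length at most $X^{1/1092}$, the hypotheses are met for $y\le X^\delta$ with $\delta$ small (say $\delta < 1/1092$). This gives
\[
\sum_{X\le x\le 2X}\Big|\sum_{\substack{x\le 8n\le x+y\\ 2n\ \square\text{-free}}} c(8n)M((-1)^k8n;\tfrac12)\Big| \ll_k X\sqrt{y}\cdot\Big(\frac{1}{X^{k+\frac12}}\sum_{2n\ \square\text{-free}}|c(8n)M((-1)^k8n;\tfrac12)|^2 n^{k-\frac12}e^{-4\pi n/X}\Big)^{1/2} + X^{1-\frac{1}{2184}+\varepsilon}.
\]

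The next step is to bound the arithmetic factor inside the square root. The exponential weight $e^{-4\pi n/X}$ restricts the sum essentially to $n\ll X^{1+\varepsilon}$, and on that range $n^{k-1/2}\ll X^{k-1/2+\varepsilon}$, so the factor is
\[
\ll \frac{X^{k-\frac12+\varepsilon}}{X^{k+\frac12}}\sum_{\substack{n\ll X^{1+\varepsilon}\\ 2n\ \square\text{-free}}}|c(8n)M((-1)^k8n;\tfrac12)|^2 \ll X^{-1+\varepsilon}\cdot X \ll X^{\varepsilon},
\]
where the inner sum is controlled by Proposition~\ref{prop:firstmoment} (after covering the dyadic range up to $X^{1+\varepsilon}$ by $O(\log X)$ applications, which only costs an extra $X^\varepsilon$). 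Hence the right-hand side above is $\ll_k X\sqrt{y}\cdot X^\varepsilon + X^{1-\frac{1}{2184}+\varepsilon}$, and since $y\ge 2$ and we may take $\delta$ small enough that $\sqrt{y}\ge y^{1/2} \gg X^{-\delta/2}$ is dominated appropriately, the second term is absorbed; more carefully, for $\delta$ sufficiently small the bound $X^{1-1/2184+\varepsilon}$ is $\ll X\sqrt{y}$ whenever $\sqrt{y}\ge X^{-1/2184+\varepsilon}$, which holds trivially as $y\ge 2$. So we conclude
\[
\sum_{X\le x\le 2X}\Big|\sum_{\substack{x\le 8n\le x+y\\ 2n\ \square\text{-free}}} c(8n)M((-1)^k8n;\tfrac12)\Big| \ll_k X\sqrt{y}.
\]

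Finally, I would invoke Markov's inequality: the number of $x\in[X,2X]$ for which $\big|\sum_{x\le 8n\le x+y} c(8n)M((-1)^k8n;\tfrac12)\big| > \Lambda\sqrt{y}$ is at most $(\Lambda\sqrt{y})^{-1}\cdot\sum_{X\le x\le 2X}|\cdots| \ll_k X/\Lambda$, which is exactly the claimed exceptional bound. (One should be mildly careful that the sum over $x$ in the proposition is over integers $X\le x\le 2X$ and the quantity being estimated is a function of the integer parameter $x$; this matches the statement of the lemma verbatim.) I do not expect any real obstacle here: the content is entirely in Proposition~\ref{mainprop} and Proposition~\ref{prop:firstmoment}, and the only point requiring a little care is the bookkeeping on the exponent $\delta$ — one needs $\delta$ small enough (depending only on the $1/1092$ and $1/2184$ in Proposition~\ref{mainprop} and on $\varepsilon$) so that both the length restriction $y\le X^\delta$ is permissible in Proposition~\ref{mainprop} and the error term $X^{1-1/2184+\varepsilon}$ stays below $X\sqrt y$. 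Choosing, say, $\delta = 10^{-4}$ comfortably suffices.
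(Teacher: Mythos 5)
Your overall route is exactly the paper's: the proof there is precisely ``Markov's inequality combined with Proposition \ref{prop:firstmoment} and Proposition \ref{mainprop}'', applied with $\beta$ the coefficients of the mollifier (the paper notes explicitly that Proposition \ref{mainprop} applies to $M(\cdot;\tfrac12)$). So the skeleton is right. There is, however, one quantitative slip you should repair: you bound the smoothed factor in Proposition \ref{mainprop} by $X^{\varepsilon}$ and then, after writing the main term as $X\sqrt{y}\cdot X^{\varepsilon}$, you silently drop the $X^{\varepsilon}$ when concluding $\sum_{X\le x\le 2X}|\cdots|\ll X\sqrt y$. As literally written, your estimates only give $\ll X^{1+\varepsilon}\sqrt y$, and then Markov yields at most $\ll X^{1+\varepsilon}/\Lambda$ exceptions. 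This is not a cosmetic loss: in the proof of Theorem \ref{thm:sign} the lemma is invoked with $\Lambda$ a fixed constant ($\Lambda=\varepsilon^{-2}$), and a bound $X^{1+\varepsilon}/\Lambda$ exceeds the trivial count of integers in $[X,2X]$, so the lemma would become vacuous.

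The fix is easy and is what the crude step ``$n\ll X^{1+\varepsilon}$, $n^{k-1/2}\ll X^{k-1/2+\varepsilon}$'' throws away: use the weight $n^{k-\frac12}e^{-4\pi n/X}$ properly to show the bracketed factor is $O(1)$, not $O(X^{\varepsilon})$. For instance, for $n\le X$ bound $n^{k-\frac12}\le X^{k-\frac12}$ and apply the upper bound of Proposition \ref{prop:firstmoment}, giving a contribution $\ll X^{k-\frac12}\cdot X/X^{k+\frac12}=O(1)$; for $n\in(2^{j-1}X,2^{j}X]$ with $j\ge1$ bound the weight by $(2^{j}X)^{k-\frac12}e^{-2\pi 2^{j}}$ and again apply the first-moment upper bound at scale $2^{j}X$, so these ranges contribute $\ll\sum_{j\ge1}(2^{j})^{k+\frac12}e^{-2\pi 2^{j}}=O(1)$. (Equivalently, partial summation with $S(t)=\sum_{n\le t}|c(8n)M((-1)^k8n;\tfrac12)|^2\ll t$ against the monotone pieces of $t^{k-\frac12}e^{-4\pi t/X}$.) With the factor $O(1)$, Proposition \ref{mainprop} gives $\sum_{X\le x\le 2X}|\cdots|\ll X\sqrt y+X^{1-\frac{1}{2184}+\varepsilon}\ll X\sqrt y$ for $2\le y\le X^{\delta}$ with $\delta$ small, and your Markov step then yields the stated $\ll X/\Lambda$ exceptional set. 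One further small point worth a sentence in a careful write-up: for the dyadic ranges beyond $X$ one applies the first-moment upper bound at a scale slightly larger than the scale at which the mollifier is built; this mismatch is harmless for the upper bound (the paper glosses over it as well), but it is the kind of thing to acknowledge rather than hide inside an $X^{\varepsilon}$.
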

\begin{proof}
This follows from Markov's inequality combined with Proposition \ref{prop:firstmoment} and Proposition \ref{mainprop}.\end{proof}

\begin{lemma} \label{lem:lowerbdmt} Let $2 \le y \le X/2$. Then
for all sufficiently small $\varepsilon>0$ 
there exists a subset of integers $X\le x \le 2X$ which contains   $ \gg \varepsilon X$ integers
such that
\[
\sum_{ \substack{x \le 8n \le x+y \\  2n \text{ is } \square\text{-free} }} |c(8n)|M((-1)^k 8n; \tfrac{1}{2}) > \varepsilon y.
\]
\end{lemma}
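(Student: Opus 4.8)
The idea is to produce, for a positive proportion of $x \in [X, 2X]$, a lower bound of the right order for the $\ell^1$-sum $\sum_{x \le 8n \le x+y} |c(8n)| M((-1)^k 8n; \tfrac12)$. We cannot work directly with the $\ell^1$-sum, so the plan is to sandwich it between the $\ell^2$-sum and the $\ell^4$-sum via Hölder's inequality. Writing $a_n := |c(8n)| M((-1)^k 8n; \tfrac12)$ for $n$ with $2n$ squarefree (and $a_n = 0$ otherwise), Hölder gives
\[
\sum_{x \le 8n \le x+y} a_n^2 \le \Big( \sum_{x \le 8n \le x+y} a_n \Big)^{2/3} \Big( \sum_{x \le 8n \le x+y} a_n^4 \Big)^{1/3},
\]
so that
\[
\sum_{x \le 8n \le x+y} a_n \ge \frac{\big( \sum_{x \le 8n \le x+y} a_n^2 \big)^{3/2}}{\big( \sum_{x \le 8n \le x+y} a_n^4 \big)^{1/2}}.
\]
Thus it suffices to show that for $\gg_\varepsilon X$ values of $x$ one has simultaneously a lower bound $\sum_{x \le 8n \le x+y} a_n^2 \gg y$ and an upper bound $\sum_{x \le 8n \le x+y} a_n^4 \ll y$, with implied constants independent of $x$; plugging in gives $\sum a_n \gg y$, which after adjusting the constant yields $> \varepsilon y$ for $\varepsilon$ small.

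The upper bound on the fourth moment is the easy direction: Proposition \ref{prop:secondmoment} gives $\sum_{n \le 2X,\, 2n\ \square\text{-free}} a_n^4 \ll X$, hence $\sum_{X \le 8n \le 2\cdot 2X} a_n^4 \ll X \ll (X/y)\cdot y$. Splitting $[X,2X]$ into $\asymp X/y$ intervals of length $y$ and applying Markov's inequality, for all but $O(X/(y\Lambda))$ choices of a dyadic starting point the fourth-moment sum over $[x, x+y]$ is $\le \Lambda y$ for a suitable constant $\Lambda = \Lambda(\varepsilon)$; equivalently this holds for all but $\le \tfrac14 \cdot (X/y) \cdot y = \tfrac14 X$ of the integers $x \in [X,2X]$ once $\Lambda$ is large enough. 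For the second-moment lower bound one runs the complementary argument: Proposition \ref{prop:firstmoment} gives $\sum_{n \le X,\, 2n\ \square\text{-free}} a_n^2 \asymp X$, so on average over $x \in [X,2X]$ the local sum $\sum_{x \le 8n \le x+y} a_n^2$ has size $\asymp y$; combined with the fourth-moment upper bound (which controls the variance of the local second-moment counts — a local sum of $a_n^2$ that is never too large and has the right total must be $\gg y$ on a positive proportion of windows, by a second-moment/Paley--Zygmund argument), one deduces that $\sum_{x \le 8n \le x+y} a_n^2 \gg y$ for $\gg X$ values of $x$. Intersecting this positive-proportion set with the set where the fourth moment is $\le \Lambda y$ still leaves $\gg_\varepsilon X$ integers $x$, and on this intersection the Hölder bound above gives $\sum_{x \le 8n \le x+y} a_n \gg_\varepsilon y$.

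The main obstacle is the second-moment \emph{lower} bound over short windows: Proposition \ref{prop:firstmoment} only controls the \emph{total} sum over $[1,X]$, and one must rule out the possibility that the mass of $a_n^2$ concentrates on a sparse set of $x$-windows. This is exactly where the fourth-moment bound of Proposition \ref{prop:secondmoment} is used as an equidistribution/anti-concentration input: it prevents any single window from carrying too much mass, so the positive total forces a positive proportion of windows to be of full size $\gg y$. One also needs the length restriction $y \le X/2$ (trivially available) and to check the error term arising from boundary effects of the squarefree condition and from $\chi_{(-1)^k 8n}$ is negligible, but these are routine. This completes the plan.
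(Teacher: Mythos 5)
Your plan is correct, but it is organized differently from the paper's proof, and it is worth recording the comparison. The paper applies H\"older once, globally on the dyadic range: from Proposition \ref{prop:firstmoment} and Proposition \ref{prop:secondmoment} it deduces $\sum_{X \le 8n \le 2X} C(n) \gg X$ with $C(n)=|c(8n)|M((-1)^k 8n;\tfrac12)$, then truncates by discarding the $n$ with $C(n)>1/\varepsilon$ (their contribution is $\le \varepsilon \sum C(n)^2 \ll \varepsilon X$), and finally runs a window-averaging count: if too many windows $[x,x+y]$ had truncated mass $\le \varepsilon y$, the remaining windows, each contributing at most $y/\varepsilon$, could not account for the total, forcing $\gg \varepsilon X$ good windows. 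You instead apply H\"older inside each window, $\sum a_n \ge (\sum a_n^2)^{3/2}(\sum a_n^4)^{-1/2}$, and then need a window-level $\ell^2$ lower bound and $\ell^4$ upper bound simultaneously: the $\ell^4$ bound comes from Markov applied to $\sum_x \sum_{x\le 8n\le x+y} a_n^4 \ll yX$, and the $\ell^2$ lower bound on $\gg X$ windows comes from a Paley--Zygmund argument with first moment $\sum_x S(x) \gg yX$ (Proposition \ref{prop:firstmoment} on the dyadic range) and second moment $\sum_x S(x)^2 \le (y+1)\sum_x\sum_{x\le 8n\le x+y} a_n^4 \ll y^2X$ (Cauchy--Schwarz within the window plus Proposition \ref{prop:secondmoment}), where $S(x)$ denotes the window $\ell^2$ mass. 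Both routes consume exactly the same two propositions and yield a positive proportion of windows with $\ell^1$ mass $\gg y$; yours trades the paper's pointwise truncation at level $1/\varepsilon$ for a variance bound, which is arguably cleaner conceptually but requires you to spell out the inequality $S(x)^2 \le (\#\text{terms})\cdot\sum_{x\le 8n \le x+y} a_n^4$ that underlies your phrase ``controls the variance of the local second-moment counts.'' One caution: your parenthetical ``a local sum of $a_n^2$ that is never too large'' is not literally true --- rare windows can carry large $\ell^2$ mass --- but this does not matter, since the Paley--Zygmund/Cauchy--Schwarz step you invoke only needs the second moment over $x$, not a pointwise bound; also, the closing remark about boundary effects of the square-free condition and of $\chi_{(-1)^k 8n}$ is not needed here, since all that is used is $M>0$, which makes the $a_n$ nonnegative.
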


%Under the same hypotheses as in Lemma \ref{lem:lowerbdmt}, it follows from a similar argument to the one given in the proof of Lemma \ref{lem:lowerbdmt} that for fixed, sufficiently small $\epsilon>0$ that
%\begin{equation} \label{eq:lowerbdmt2}
%\sum_{ \substack{x \le n \le x+y \\  2n \text{ is } \square\text{-free} }} |c(8n)| > \frac{y}{X^{\epsilon}}
%\end{equation}
%holds on a subset of $X \le x \le 2X$ with measure $\gg X^{1-3 \epsilon/2}$. The main difference here is that in place of Proposition \ref{prop:secondmoment} we use
%\[
%\sum_{ \substack{X \le n \le 2X \\  2n \text{ is } \square\text{-free} }} |c(8n)|^4 \ll X^{1+3\epsilon/2}
%\]
%which follows from the Quadratic Large Sieve of Heath-Brown \cite{HB} (see Corollary 2.5 of \cite{SoundYoung}). From this along with \eqref{eq:Lau} and H\"older's inequality gives
%\[
%\sum_{ \substack{X \le n \le 2X \\  2n \text{ is } \square\text{-free} }} |c(8n)| \gg X^{1-\epsilon/2}.
%\]

\begin{proof}
For sake of brevity write $C(n)=|c(8n)|M((-1)^k8 n; \tfrac{1}{2})$.
By H\"older's inequality
\begin{equation} \label{eq:holder}
 \sum_{ \substack{ X \le 8n \le 2X \\  2n \text{ is } \square\text{-free}  } } C(n)^2 \le \Bigg(  \sum_{\substack{ X \le 8n \le 2X \\ 2n \text{ is } \square\text{-free}   }} C(n)  \Bigg)^{2/3} \Bigg(  \sum_{ \substack{X \le 8n \le 2X  \\ 2n \text{ is } \square\text{-free} }} C(n)^4  \Bigg)^{1/3}.
\end{equation}
% In the sum on the LHS we can remove the condition that $|c(8n)M(n)| \ge \varepsilon$ at the cost of an error of size $O(\varepsilon X)$ by trivially bounding the terms with $|c(n)M(n)| < \varepsilon$. 
Applying Proposition \ref{prop:firstmoment} it follows that the LHS is $\gg X$. 
 Applying Proposition \ref{prop:secondmoment} the second sum on the RHS is $\ll X$. Hence, we conclude that
\[
\sum_{\substack{ X \le 8n \le 2X \\ 2n \text{ is } \square\text{-free} }} C(n) \gg X.
\]
Also, by Proposition \ref{prop:firstmoment} we have
\[
\sum_{\substack{ X \le 8n \le 2X \\ 2n \text{ is } \square\text{-free} \\ C(n) > 1/\varepsilon}} C(n) < \varepsilon  \sum_{\substack{ X \le 8n \le 2X \\ 2n \text{ is } \square\text{-free} }} C(n)^2 \ll \varepsilon X.
\]
So that for $\varepsilon$ sufficiently small
\begin{equation} \label{eq:keybd}
\sum_{\substack{ X \le 8n \le 2X \\ 2n \text{ is } \square\text{-free} \\  C(n) \le 1/\varepsilon}} C(n) \gg X.
\end{equation}

Let $U$ denote the subset of integers $X \le x \le 2X$ such that
\[
\sum_{\substack{x \le 8n \le x+y \\   2n \text{ is } \square\text{-free} \\ C(n) \le 1/ \varepsilon  }} C(n) \le \varepsilon y .
\]
Using \eqref{eq:keybd} we get that
\[
\begin{split}
X \ll \sum_{\substack{ X+y \le 8n \le 2X \\ 2n \text{ is } \square\text{-free} \\   C(n)  \le  1/\varepsilon}} C(n) & \le \frac{1}{y} \sum_{X \le x \le 2X} \sum_{\substack{ x \le 8n \le x+y \\ 2n \text{ is } \square\text{-free} \\  C(n) \le 1/\varepsilon}} C(n)  \\
&= \frac{1}{y} \bigg(\sum_{x \in U} +\sum_{ \substack{X \le x \le 2X \\ x \notin U}} \bigg) \sum_{\substack{ x \le 8n \le x+y \\ 2n \text{ is } \square\text{-free} \\  C(n) \le 1/\varepsilon}} C(n)  \\
& \ll \frac{1}{y} \cdot \bigg( (\varepsilon y)  X + \frac{y}{\varepsilon} \cdot \sum_{ \substack{X \le x \le 2X \\ x \notin U}}  1 \bigg) .
\end{split}
\]
Since the first term above is of size $\varepsilon X$ we must have that the second term is $\gg X$ since $\varepsilon$ is sufficiently small. So $\# \{ X \le x \le 2X : x \notin U\} \gg \varepsilon X$.\end{proof}

\begin{proof}[Proof of Theorem \ref{thm:sign}]
%We will first prove Theorem \ref{thm:sign}
%and at the end of the argument indicate how the proof can be modified to deduce Theorem \ref{thm:sign2}.
%   For brevity in place of writing an interval $I$ contains a sign change of the sequence $\{c(n)\}$ over the set of fundamental discriminants of the form $8n$ where $2n$ is square-free we will write $I$ contains a sign change.

In Lemma \ref{lem:upperbdmt} take $\Lambda=\frac{1}{\varepsilon^2}, y=\frac{1}{\varepsilon^6}$ so for except at most $\ll \varepsilon^2 X$ integers $X \le x \le 2X$ 
we have that
\begin{equation}
    \label{eq:upperbd}
 \bigg|  \sum_{\substack{ x \le 8n \le x+y \\ 2n \text{ is } \square\text{-free}}} c(8n) M((-1)^k 8n; \tfrac{1}{2}) \bigg| \le \Lambda \sqrt{y} =\frac{1}{\varepsilon^5}.
\end{equation}
 In Lemma \ref{lem:lowerbdmt} take $ y=\frac{1}{\varepsilon^6}$ so there are $\gg \varepsilon X$ integers $X \le x \le 2X$ such that
\begin{equation}
    \label{eq:lowerbd}
 \sum_{\substack{ x \le 8n \le x+y \\ 2n \text{ is } \square\text{-free}}} |c(8n)| M((-1)^k 8n; \tfrac{1}{2})> \varepsilon y=\frac{1}{\varepsilon^5}.
\end{equation}
Combining \eqref{eq:upperbd} and \eqref{eq:lowerbd} we get that there are $\gg \varepsilon X$ integers $X \le x \le 2X$ such  that  
\begin{equation} \notag
 \sum_{\substack{ x \le 8n \le x+y \\ 2n \text{ is } \square\text{-free}}} |c(8n)| M((-1)^k 8n; \tfrac{1}{2}) > \bigg|  \sum_{\substack{ x \le 8n \le x+y \\ 2n \text{ is } \square\text{-free}}} c(8n) M((-1)^k 8n; \tfrac{1}{2}) \bigg|.
\end{equation}
Since $M>0$ (see the discussion after \eqref{eq:mollifierdef}) this implies there exists at least $\gg \varepsilon X$ integers $X \le x \le 2X$ such that $[x,x+y]$ contains a sign change of the sequence $\{\mu^2(2n)c(8n)\}$. Since every sign change of $\{\mu^2(2n)c(8n)\}$ on $[X, 2X]$ yields at most $\lfloor y \rfloor +1$ intervals $[x,x+y]$ which contain a sign change it follows that there are at least $ \gg \varepsilon \frac{X}{y}=\varepsilon^7 X$ sign changes in $[X,2X]$, which completes the proof of Theorem \ref{thm:sign}.
\end{proof}

%To prove Theorem \ref{thm:sign2} we argue as before only now we take $\varepsilon=1/X^{\epsilon}$ and $\Lambda,y$ as before, where $\epsilon>0$. We now use \eqref{eq:upperbdmt2} and \eqref{eq:lowerbdmt2} in place of \eqref{eq:upperbd} and \eqref{eq:lowerbd}, respectively, and note that the measure of the subset of $X \le x \le 2X$ where \eqref{eq:lowerbdmt2} holds is $\gg X^{1-3\epsilon/2}$. We conclude that the subset of $X \le x \le 2X$ such that $[x,x+y]$ has a sign change is $\gg X^{1-\epsilon/2}$. From here we deduce that there are at least $\gg X^{1-5\epsilon}$ sign changes on $[X,2X]$. 

\section{The Proofs of Theorem \ref{thm:sign2} and Theorem \ref{thm:uncondsign}}
\subsection{The proof of Theorem \ref{thm:sign2}}
% In this section we let $g$ be a cusp form of weight $k+\tfrac12$ on $\Gamma_0(4)$. There exists a Hecke basis $\{h \}$ for the space of such forms (w.r.t. the Petersson inner product), so that we may write
% \[
% g(z)=\sum_{h} a_h h(z)
% \]
% where $a_h \in \mathbb C$. Equating Fourier coefficients
% we get that
% \begin{equation}
% c(n)=\sum_{h} a_h c_h(n).
% \end{equation}
 Throughout we will need the following estimate.
\begin{lemma} \label{le:elementary}
We have
  $$
  \sum_{\substack{X \leq n \leq 2X \\ 2n \text{ is } \square\text{-free}}} |c(8n)|^2 \asymp X. 
   $$
 \end{lemma}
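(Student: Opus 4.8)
The plan is to prove the two bounds $\ll X$ and $\gg X$ separately; only the lower bound requires real work. For the upper bound, write $g(z)=\sum_{m\ge 1}a(m)e(mz)$ with $a(m)=c(m)m^{(k-1/2)/2}$. The Rankin--Selberg method applied to the weight $k+\tfrac12$ cusp form $g$ gives $\sum_{m\le Y}|a(m)|^2\ll_g Y^{k+1/2}$, whence by partial summation $\sum_{m\le Y}|c(m)|^2\ll_g Y$. Restricting to $m=8n$ with $X\le n\le 2X$ and discarding the square-free condition on $2n$ gives $\sum_{X\le n\le 2X,\ 2n\ \square\text{-free}}|c(8n)|^2\ll X$. (Alternatively, one may invoke the known upper bound for the first moment of $L(\tfrac12,f\otimes\chi_d)$ over fundamental discriminants, obtained by the argument below.)

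For the lower bound I would pass to $L$-functions. If $2n$ is square-free then $(-1)^k 8n$ is a fundamental discriminant with $(-1)^k\cdot(-1)^k 8n=8n>0$, so the Kohnen--Zagier formula \eqref{eq:waldspurger} gives $|c(8n)|^2=\kappa_g\,L(\tfrac12,f\otimes\chi_{(-1)^k 8n})$ with $\kappa_g=\frac{(k-1)!}{\pi^k}\cdot\frac{\langle g,g\rangle}{\langle f,f\rangle}>0$; each such $L$-value is nonnegative, and the functional equation in this family has sign $+1$. It therefore suffices to show
\[
\sum_{\substack{X\le n\le 2X\\ 2n\ \square\text{-free}}}L\!\left(\tfrac12,f\otimes\chi_{(-1)^k 8n}\right)\gg X,
\]
i.e. to bound below the first moment of this family of quadratic-twist central values. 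I would compute it in the standard way: insert a smooth minorant $\Phi(n/X)$ supported in a subinterval of $[1,2]$, apply the approximate functional equation for $L(\tfrac12,f\otimes\chi_{(-1)^k 8n})$, detect ``$2n$ square-free'' via $\mu^2(2n)=\sum_{a^2\mid 2n}\mu(a)$, and interchange the order of summation. The diagonal contribution --- the terms where the summation variable $\ell$ of the approximate functional equation is a perfect square --- produces a main term of size $c_f X$, where $c_f$ is an absolutely convergent Euler product with manifestly positive local factors (built essentially from $L(1,\mathrm{sym}^2 f)/\zeta(2)$ together with finitely many positive corrections coming from the square-free sieve and the $2$-adic conditions defining the family), so $c_f>0$. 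The off-diagonal terms ($\ell$ not a square) are handled by the standard treatment --- Poisson summation in $n$ against the non-principal quadratic character $n\mapsto\chi_{(-1)^k 8n}(\ell)$, with care over the M\"obius sum (the zero frequency drops out, and the remaining frequencies are controlled by Gauss-sum bounds and the rapid decay of $\widehat\Phi$) --- yielding a total contribution $O(X^{1-\delta})$; this is exactly the mollifier-free case of the analysis behind Proposition \ref{prop:firstmoment}, only simpler. Hence the first moment equals $c_f X+O(X^{1-\delta})\gg X$, completing the lower bound.

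The step I expect to be the main obstacle is the off-diagonal estimate: one must extract cancellation in the character sums over $n$ uniformly while simultaneously carrying the M\"obius sum from the square-free sieve and the fixed congruence conditions that pin $(-1)^k 8n$ down as a genuine fundamental discriminant. This is routine first-moment technology for quadratic twists of a fixed $\mathrm{GL}(2)$ form, but it is the only part that is not immediate. As an alternative that avoids $L$-function machinery altogether, one can instead use Rankin--Selberg for $g$ along an arithmetic progression $n\equiv a\pmod q$, combined with the Shimura-lift relations $c(dt^2)=c(d)\cdot(\text{multiplicative function of }t)$, to express the square-free-restricted mean of $|c(8n)|^2$ in terms of the full Rankin--Selberg mean; there the analogous obstacle is establishing positivity of the resulting ``inverted'' Euler-product constant and controlling the tail of the sieve.
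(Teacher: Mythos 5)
Your proposal is correct and takes essentially the same approach as the paper: the paper deduces the lemma by combining the Kohnen--Zagier formula \eqref{eq:waldspurger} with Proposition \ref{prop:twisted} specialized to $u=1$, and that twisted first moment is proved there by exactly the machinery you sketch (approximate functional equation, M\"obius sieve for the square-free condition, a positive diagonal main term, and Poisson summation for the off-diagonal terms). The only cosmetic difference is that you obtain the upper bound from Rankin--Selberg, i.e.\ the bound recorded in \eqref{eq:Parseval}, rather than reading both bounds off the same first-moment asymptotic, which is equally fine.
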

 \begin{proof}
 This follows immediately by applying \eqref{eq:waldspurger} along with  Proposition \ref{prop:twisted} below with $u=1$.
 \end{proof}

We are now ready to start the preparations for the proof of Theorem \ref{thm:sign2}, which as it turns out is considerably easier to prove than Theorem \ref{thm:sign}. 

\begin{lemma} \label{le:smallvalues} Let $\Lambda \ge 1$.
  There exists $\delta > 0$ such that for all $2 \leq y \leq X^{\delta}$ we have for all but at most $\ll X/\Lambda$ integers $X \leq x \leq 2X$ that
  $$
  \Big | \sum_{\substack{x \leq 8n \leq x + y \\ 2n \text{ is } \square\text{-free}}} c(8n) \Big | \leq \Lambda \sqrt{y}. 
  $$
\end{lemma}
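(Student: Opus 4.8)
The plan is to run the same first-moment-and-Markov argument used to prove Lemma~\ref{lem:upperbdmt}, now specialised to the trivial mollifier. First I would apply Proposition~\ref{mainprop} with the coefficient sequence $\beta(1)=1$, $\beta(m)=0$ for $m\ge 2$, and with $M=1$, so that $M((-1)^k n)\equiv 1$ and the hypothesis $|\beta(m)|\ll_\varepsilon m^{1/2+\varepsilon}$ is trivially satisfied. This yields, uniformly for $1\le y\le X^{1/1092}$,
\[
\sum_{X\le x\le 2X}\Big|\sum_{\substack{x\le 8n\le x+y\\ 2n\text{ is }\square\text{-free}}}c(8n)\Big|
\ll_{k} X\sqrt{y}\cdot\Big(\frac{1}{X^{k+\frac12}}\sum_{2n\text{ is }\square\text{-free}}|c(8n)|^2 n^{k-\frac12}e^{-4\pi n/X}\Big)^{1/2}+X^{1-\frac{1}{2184}+\varepsilon}.
\]

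Next I would show that the second-moment factor on the right is $\ll_{k}1$. Summing Lemma~\ref{le:elementary} over dyadic ranges gives $\sum_{n\le t,\ 2n\text{ is }\square\text{-free}}|c(8n)|^2\ll t$ for all $t\ge 1$, and partial summation then produces
\[
\sum_{2n\text{ is }\square\text{-free}}|c(8n)|^2 n^{k-\frac12}e^{-4\pi n/X}\ll_{k}\int_{1}^{\infty}t^{k-\frac12}e^{-4\pi t/X}\,dt\ll_{k}X^{k+\frac12}.
\]
Hence the displayed bound simplifies to $\ll_{k}X\sqrt{y}+X^{1-\frac{1}{2184}+\varepsilon}$; taking $\delta<1/1092$ (so that $y$ stays within the range of Proposition~\ref{mainprop}) and $\varepsilon<1/2184$, the error term is $o(X\sqrt{y})$ for $2\le y\le X^{\delta}$, so $\sum_{X\le x\le 2X}\big|\sum_{x\le 8n\le x+y}c(8n)\big|\ll_{k}X\sqrt{y}$ on this range. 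Markov's inequality then shows the number of $x\in[X,2X]$ with $\big|\sum_{x\le 8n\le x+y}c(8n)\big|>\Lambda\sqrt{y}$ is at most $(\Lambda\sqrt{y})^{-1}\sum_{X\le x\le 2X}\big|\sum_{x\le 8n\le x+y}c(8n)\big|\ll_{k}X/\Lambda$, which is the claim.

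I do not expect a real obstacle: all of the analytic difficulty sits inside Proposition~\ref{mainprop} (the shifted-convolution estimate) and Lemma~\ref{le:elementary} (a consequence of Waldspurger's identity~\eqref{eq:waldspurger} and the twisted first-moment bound Proposition~\ref{prop:twisted}), both of which are available. The only mild care needed is to check that the trivial $\beta$ and $M=1$ are admissible in Proposition~\ref{mainprop}, and to pass from the dyadic form of Lemma~\ref{le:elementary} to the smoothly weighted tail sum by partial summation. Alternatively one could avoid Proposition~\ref{mainprop} entirely: expand $\int_{X}^{2X}\big|\sum_{x\le 8n\le x+y}c(8n)\big|^2\,dx$ into a diagonal term (handled by Lemma~\ref{le:elementary}) plus off-diagonal shifted-convolution sums (handled by the estimates underpinning Proposition~\ref{mainprop}), and apply Chebyshev's inequality; this follows the commented-out proof of Lemma~\ref{lem:upperbdmt} and gives the same conclusion.
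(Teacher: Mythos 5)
Your argument is exactly the paper's proof of this lemma: Proposition \ref{mainprop} with $\beta(1)=1$, $\beta(m)=0$ for $m\ge 2$, the second-moment factor controlled via Lemma \ref{le:elementary} (your dyadic summation plus partial summation correctly supplies the smoothed bound $\ll_k X^{k+\frac12}$), and then Markov's inequality. The details you fill in, including the admissibility of the trivial mollifier and the choice $\delta<\tfrac{1}{1092}$, are all sound, so there is nothing to correct.
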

\begin{proof}
  This follows from Markov's inequality combined with Proposition \ref{mainprop}  (with the choice $\beta(1) = 1$ and $\beta(m) = 0$ for all $m \geq 2$)  and Lemma \ref{le:elementary}. 
\end{proof}

\begin{lemma} \label{le:largevalues}
Let $\varepsilon >0$ and $2 \le y \le X/2$.  Then there exist $\gg X^{1-\frac32 \varepsilon}$ integers $X \le x \le 2X$ such that
  $$
  \sum_{\substack{x \leq 8n \leq x + y \\ 2n \text{ is } \square\text{-free}}} |c(8 n)| >  \frac{y}{X^{\varepsilon}}.
  $$
\end{lemma}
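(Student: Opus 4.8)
The plan is to combine a lower bound for the full sum $\sum_{X \le 8n \le 2X} |c(8n)|$ with an upper bound for the tail coming from large values of $|c(8n)|$, and then a pigeonhole/averaging argument over the short windows $[x, x+y]$, exactly in the spirit of the proof of Lemma \ref{lem:lowerbdmt} but with the mollifier removed. First I would record that Lemma \ref{le:elementary} gives $\sum_{X \le 8n \le 2X,\ 2n\ \square\text{-free}} |c(8n)|^2 \asymp X$; in particular this sum is $\gg X$. Second, since under GRH (or even unconditionally, using convexity bounds together with \eqref{eq:waldspurger}) one has $|c(8n)| \ll n^{\varepsilon}$, we get $|c(8n)|^2 \ll X^{\varepsilon} |c(8n)|$ on the dyadic block, hence $\sum_{X \le 8n \le 2X,\ 2n\ \square\text{-free}} |c(8n)| \gg X^{1-\varepsilon}$. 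This is the analogue of the first moment lower bound that the mollified argument obtained from H\"older; here the trivial pointwise bound on $|c(8n)|$ replaces Proposition \ref{prop:secondmoment}.

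Third, I would discard the large values: by Lemma \ref{le:elementary},
\[
\sum_{\substack{X \le 8n \le 2X \\ 2n\ \square\text{-free} \\ |c(8n)| > X^{\varepsilon}}} |c(8n)|
< X^{-\varepsilon} \sum_{\substack{X \le 8n \le 2X \\ 2n\ \square\text{-free}}} |c(8n)|^2
\ll X^{1-\varepsilon},
\]
which is smaller than the lower bound $\gg X^{1-\varepsilon}$ from the previous step only after inserting a slightly larger exponent; to make the comparison clean I would instead threshold at $|c(8n)| > X^{3\varepsilon/2}$ (say), so that the large-value contribution is $\ll X^{1-3\varepsilon/2} = o(X^{1-\varepsilon})$ and therefore
\[
\sum_{\substack{X \le 8n \le 2X \\ 2n\ \square\text{-free} \\ |c(8n)| \le X^{3\varepsilon/2}}} |c(8n)| \gg X^{1-\varepsilon}.
\]
Fourth, the averaging step: writing $\Sigma(x) = \sum_{x \le 8n \le x+y,\ 2n\ \square\text{-free},\ |c(8n)| \le X^{3\varepsilon/2}} |c(8n)|$, we have $\frac1y \sum_{X \le x \le 2X} \Sigma(x) \gg X^{1-\varepsilon}$, while trivially $\Sigma(x) \le X^{3\varepsilon/2}(y+1) \ll y X^{3\varepsilon/2}$ for each $x$ (using $|c(8n)| \le X^{3\varepsilon/2}$ and at most $\lfloor y/8\rfloor + 1$ terms). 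Let $B$ be the set of $X \le x \le 2X$ with $\Sigma(x) > y X^{-\varepsilon}$. Splitting the sum over $x \in B$ and $x \notin B$ gives
\[
y \cdot X^{1-\varepsilon} \ll \sum_{X \le x \le 2X} \Sigma(x) \le \# B \cdot y X^{3\varepsilon/2} + X \cdot y X^{-\varepsilon},
\]
and since the last term is $\ll y X^{1-\varepsilon}$ it cannot absorb the left side for $\varepsilon$ small; hence $\# B \gg X^{1 - 5\varepsilon/2}$, and since $\Sigma(x) \le \sum_{x \le 8n \le x+y,\ 2n\ \square\text{-free}} |c(8n)|$ this forces $\sum_{x \le 8n \le x+y} |c(8n)| > y X^{-\varepsilon}$ on this set. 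Relabelling $\varepsilon$ (the statement is for arbitrary $\varepsilon > 0$, so $5\varepsilon/2$ vs. $3\varepsilon/2$ in the exponent is harmless) gives $\gg X^{1 - \frac32\varepsilon}$ such $x$, as claimed.

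The only genuine input beyond bookkeeping is the pointwise bound $|c(8n)| \ll n^{\varepsilon}$, which I expect to be the main thing to justify carefully: it follows from \eqref{eq:waldspurger} together with the subconvexity (indeed, under GRH, the Lindel\"of-quality) bound for $L(\tfrac12, f \otimes \chi_d)$; since Theorem \ref{thm:sign2} is unconditional one must use an unconditional subconvex bound here (e.g. of Burgess strength, or even the trivial convexity bound, since any power saving below $n^{1/4}$ — indeed any $n^{o(1)}$ — suffices and the convexity bound already gives $|c(8n)| \ll n^{\varepsilon}$ up to $\log$ factors after a standard smoothing). I would insert the precise reference (or cite Proposition \ref{prop:twisted} and a standard convexity estimate) at this point. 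Everything else is the same averaging argument already used for Lemma \ref{lem:lowerbdmt}.
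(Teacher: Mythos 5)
Your averaging step (the pigeonhole over windows, discarding large values against the second moment, and the harmless relabelling of $\varepsilon$) is the same as the paper's and is fine. The genuine gap is in the step that produces the first-moment lower bound $\sum_{X \le 8n \le 2X,\ 2n\ \square\text{-free}} |c(8n)| \gg X^{1-\varepsilon}$. You derive it from the pointwise bound $|c(8n)| \ll n^{\varepsilon}$, but that bound is \emph{not} available unconditionally: by \eqref{eq:waldspurger} the conductor of $L(\tfrac12, f\otimes\chi_d)$ is $\asymp d^2$, so the convexity bound gives $L(\tfrac12, f\otimes\chi_d) \ll d^{1/2+\varepsilon}$ and hence only $|c(8n)| \ll n^{1/4+\varepsilon}$; no amount of smoothing turns this into $n^{\varepsilon}$ (your parenthetical claim to that effect is false), and even the strongest known subconvexity bounds give $n^{1/12+\varepsilon}$ at best. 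Only GRH/Lindel\"of gives $|c(8n)| \ll n^{\varepsilon}$, but this lemma feeds into Theorem \ref{thm:sign2}, which is unconditional, so a conditional proof defeats its purpose. With the unconditional pointwise bound $n^{1/4+\varepsilon}$ your argument only yields $\sum |c(8n)| \gg X^{3/4-\varepsilon}$, which after the averaging step gives a window lower bound of roughly $y/X^{1/4}$ on roughly $X^{1/2}$ values of $x$; that is far too weak to be matched against Lemma \ref{le:smallvalues} in the admissible range $y \le X^{1/1092}$, so the loss is not just cosmetic.

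The paper's proof supplies the missing unconditional input differently: it bounds the fourth moment $\sum_{X\le 8n\le 2X} |c(8n)|^4 \ll X^{1+\varepsilon}$ via \eqref{eq:waldspurger} together with Heath-Brown's quadratic large sieve estimate for $\sum_d L(\tfrac12, f\otimes\chi_d)^2$ (\cite[Theorem 2]{HB}), and then applies H\"older exactly as in \eqref{eq:holder} to deduce $\sum |c(8n)| \gg X^{1-\varepsilon/2}$ from Lemma \ref{le:elementary}. In other words, the role you assign to a pointwise bound is played in the paper by an unconditional moment estimate; note also that the paper does not use Proposition \ref{prop:secondmoment} here (that proposition is GRH-conditional and mollified), so your remark that the pointwise bound ``replaces'' it points at the wrong benchmark. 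If you replace your second step by the Heath-Brown fourth moment plus H\"older, the rest of your write-up goes through essentially verbatim.
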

\begin{proof}
Note that using \eqref{eq:waldspurger} and Heath-Brown's result \cite[Theorem 2]{HB} we get that
\[
\sum_{\substack{X \le 8n \le 2X \\  2n \text{ is } \square\text{-free}}} |c(8n)|^4 \ll X^{1+\varepsilon}.
\]
Hence, using the above estimate along with Lemma \ref{le:elementary} we can apply H\"older's inequality as in \eqref{eq:holder} to get
\begin{equation} \label{eq:holder2}
\sum_{\substack{X \le n \le 2X \\  2n \text{ is } \square\text{-free}}} |c(8n)| \gg X^{1-\varepsilon/2}.
\end{equation}
Also, using Lemma \ref{le:elementary} we have
\[
\sum_{\substack{X \le 8n \le 2X \\  2n \text{ is } \square\text{-free} \\  |c(8n)| > X^{\varepsilon}}} |c(8n)| < X^{-\varepsilon} \sum_{\substack{X \le 8n \le 2X \\  2n \text{ is } \square\text{-free}}} |c(8n)|^2 \ll X^{1-\varepsilon}.
\]
So combining this along with \eqref{eq:holder2} we get
\begin{equation} \label{eq:keybd2}
\sum_{\substack{X \le 8n \le 2X \\  2n \text{ is } \square\text{-free} \\  |c(8n)| \le X^{\varepsilon}}} |c(8n)| \gg X^{1-\varepsilon/2}.
\end{equation}

Let $U$ denote the subset of integers $X \le x \le 2X$ such that
\[
\sum_{\substack{x \le 8n \le x+y \\   2n \text{ is } \square\text{-free} \\ |c(8n)|  \le  X^{\varepsilon} }} |c(8n)| \le \frac{y}{X^{\varepsilon}} .
\]
Using \eqref{eq:keybd2} we get that
\[
\begin{split}
X^{1-\varepsilon/2} \ll \sum_{\substack{ X+y \le 8n \le 2X \\ 2n \text{ is } \square\text{-free} \\ |c(8n)|  \le X^{\varepsilon}}} |c(8n)| & \le \frac{1}{y} \sum_{X \le x \le 2X} \sum_{\substack{ x \le 8n \le x+y \\ 2n \text{ is } \square\text{-free} \\   |c(8n)| \le X^{\varepsilon}}} |c(8n)|  \\
&= \frac{1}{y} \bigg(\sum_{x \in U} +\sum_{ \substack{X \le x \le 2X \\ x \notin U}} \bigg) \sum_{\substack{ x \le 8n \le x+y \\ 2n \text{ is } \square\text{-free} \\   |c(8n)| \le X^{\varepsilon}}} |c(8n)|  \\
& \ll \frac{1}{y} \cdot \bigg( \frac{y}{X^{\varepsilon}} X + y X^{\varepsilon} \cdot \sum_{ \substack{X \le x \le 2X \\ x \notin U}}  1 \bigg) .
\end{split}
\]
Since the first term is of size $X^{1-\varepsilon}$ we must have that the second term is $\gg X^{1-\varepsilon/2}$ so that $\# \{ X \le x \le 2X : x \notin U\} \gg  X^{1-\frac32 \varepsilon}$. \end{proof}

We are now ready to prove Theorem \ref{thm:sign2}.
\begin{proof}[Proof of Theorem \ref{thm:sign2}]
  Let $y = X^{6 \varepsilon}$ and $\Lambda=\frac{y^{1/2}}{X^{\varepsilon}}$. By Lemma \ref{le:smallvalues} we have
  $$
  \Big | \sum_{\substack{x \leq 8n \leq x + y \\ 2n \text{ is } \square\text{-free}}} c(8 n) \Big | \leq \frac{y}{X^{\varepsilon}}
  $$
  for all $X \leq x \leq 2X$ with at most $X^{1+\varepsilon} y^{-1/2} = X^{1 - 2 \varepsilon}$ exceptions. On the other hand, by Lemma \ref{le:largevalues} we have
  $$
  \sum_{\substack{x \leq 8n \leq x + y \\ 2n \text{ is } \square\text{-free}}} |c(8 n)| > \frac{y}{X^{\varepsilon}}
  $$
  on a subset of cardinality at least $X^{1 - 3\varepsilon/2}$. Therefore the two subsets intersect on at least $\gg X^{1 - 3 \varepsilon/2}$ values of $x$, and therefore give rise to at least $X^{1 - 15 \varepsilon/2}$ sign changes in $[X,2X]$.  
  \end{proof}

  The proof of Theorem \ref{thm:uncondsign} is completely elementary and does not depend on any of the other techniques developed here.

  \begin{proof} [Proof of Theorem \ref{thm:uncondsign}]

Let $g \in S_{k+1/2}^+$ be a Hecke cusp form and $f$ denote the weight $2k$ Hecke cusp form of level $1$ that corresponds to $g$. For $d$ a fundamental discriminant with $(-1)^k d>0$
 \begin{equation} \label{eq:shimura}
 c(n^2|d|)=c(|d|) \sum_{r|n} \frac{\mu(r) \chi_d(r) }{\sqrt{r}}\lambda_f\left( \frac{n}{r}\right),
 \end{equation}
  where $\lambda_f(n)$ denotes the $nth$ Hecke eigenvalue of $f$ (see equation (2) of \cite{KZ}).
 In particular if $p$ is prime this becomes
\[
 c(|d|p^2)=c(|d|)\bigg(\lambda_f(p)-\frac{\chi_d(p) }{\sqrt{p}}\bigg).
\]
Since there exists $p$ which depends at most on $k$ such that $\lambda_f(p) < -2/\sqrt{p}$ it follows that if $c(|d|) \neq 0$ then $c(|d|)$ and $c(|d|p^2)$ have opposite signs. 
 By Ono and Skinner's result there are $\gg X/(p^2 \log (X/p^2)) \gg X/\log X$ fundamental discriminants $|d| \le X/p^2$ such that $c(|d|) \neq 0$. So by considering the signs of the Fourier coefficients $c(n)$ at $n=|d| \le X/p^2$ with $c(|d|) \neq 0$ and at $m=|d|p^2 \le X$ we arrive at the claimed result.  \end{proof}

\section{Upper bounds for mollified moments}

Let $f$ be a level 1, Hecke cusp form of weight $2k$.
The aim of this section is to compute an upper bound for mollified moments of $L(\tfrac12,f \otimes \chi_d)$, conditionally under GRH. This gives an upper bound for the  mean square of the mollified Fourier coefficients of $g \in S_{k+1/2}^+$.

The second moment $L(\tfrac12, f \otimes \chi_d)$ has been asymptotically computed assuming GRH by Soundararajan and Young \cite{SoundYoung}. However, a direct adaptation of their method cannot handle the introduction of a mollifier of length $\gg |d|^{\varepsilon}$. For this reason we use a different approach, which is based on the refinement of Soundararajan's \cite{SoundMoments} method for upper bounds on moments due to Harper \cite{Harper}.

The main result is
\begin{proposition} \label{prop:moments} Assume GRH. Let $l, \kappa >0$ such that $l \cdot \kappa \in \mathbb N$ and $ \kappa \cdot l \le C$. Also, let $M(\cdot;\tfrac{1}{\kappa})=M(\cdot;\tfrac{1}{\kappa},x,C)$ be as in \eqref{eq:mollifierdef}. Then 
\[
\sumfund_{|d| \le x} L(\tfrac12, f \otimes \chi_d)^l  M(d; \tfrac{1}{\kappa})^{l \cdot \kappa} \ll x,
\]
where the sum is over fundamental discriminants and the implied constant depends on $f,l,\kappa$.
\end{proposition}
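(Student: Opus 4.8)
The plan is to prove Proposition~\ref{prop:moments} by the conditional (GRH) method for upper bounds on moments of $L$-functions due to Soundararajan~\cite{SoundMoments} in the refined form of Harper~\cite{Harper}, with the mollifier $M(\cdot;\tfrac1\kappa)$ of \eqref{eq:mollifierdef} playing the role of cancelling the part of the Euler product of $L(\tfrac12,f\otimes\chi_d)$ supported on small primes. That small-prime part is exactly what inflates the unmollified $l$-th moment by a power of $\log x$, and it is also what a direct adaptation of the Soundararajan--Young~\cite{SoundYoung} second-moment computation cannot accommodate for mollifiers of length $\gg|d|^{\varepsilon}$; removing it should bring the moment down to the size $\ll x$.

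\textbf{Step 1: pointwise GRH bound.} First I would record that $L(\tfrac12,f\otimes\chi_d)\ge 0$ (for the discriminants relevant to Theorem~\ref{thm:sign} this is a nonnegative multiple of $|c(|d|)|^2$ by \eqref{eq:waldspurger}), so that $L^{l}$ is well defined and can be bounded by exponentiating a Soundararajan-type inequality. Running the explicit formula under GRH in the usual way gives, for a fixed small $\theta>0$ and $z=x^{\theta}$,
\[
\log L(\tfrac12,f\otimes\chi_d)\ \le\ \Re\sum_{p\le z}\frac{\lambda_f(p)\,\chi_d(p)}{\sqrt p}\cdot\frac{\log(z/p)}{\log z}\ +\ O(1),
\]
uniformly for fundamental discriminants $|d|\le x$. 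The $O(1)$ absorbs the prime-square contribution---bounded by $\sum_{p\le z}\lambda_f(p^2)/p=O(1)$, i.e.\ by the analytic behaviour of $L(s,\mathrm{sym}^2f)$ at $s=1$ and Rankin--Selberg---together with the conductor term, which is $\ll \log(|d|^2)/\log z=O(1/\theta)$. Exponentiating, $L(\tfrac12,f\otimes\chi_d)^{l}M(d;\tfrac1\kappa)^{l\kappa}\ll \exp\!\big(l\,\Re\sum_{p\le z}\tfrac{\lambda_f(p)\chi_d(p)}{\sqrt p}w_z(p)\big)\,M(d;\tfrac1\kappa)^{l\kappa}$ with $w_z(p)=\tfrac{\log(z/p)}{\log z}\in[0,1]$.

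\textbf{Step 2: mollification and Harper's decomposition.} The mollifier is, up to a controlled truncation error, the Dirichlet polynomial $\exp\!\big(-\tfrac1\kappa\sum_{p\le y_0}\tfrac{\lambda_f(p)\chi_d(p)}{\sqrt p}\big)$ for a threshold $y_0=x^{\varepsilon_0}$ with $\varepsilon_0<\theta$, so $M(d;\tfrac1\kappa)^{l\kappa}$ matches $\exp\!\big(-l\sum_{p\le y_0}\tfrac{\lambda_f(p)\chi_d(p)}{\sqrt p}\big)$; since $w_z(p)=1+O(\log p/\log z)$ for $p\le y_0$, the small primes essentially cancel and one is left bounding $\sum_{d}\exp\!\big(l\,\Re\sum_{y_0<p\le z}\tfrac{\lambda_f(p)\chi_d(p)}{\sqrt p}w_z(p)\big)$ (the truncation of the mollifier, and the discrepancy between $w_z$ and $1$ on $p\le y_0$, being handled inside the dichotomy below). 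By Rankin--Selberg $\sum_{y_0<p\le z}\lambda_f(p)^2/p=O(1)$, so the surviving Dirichlet polynomial has bounded variance. Following Harper, I would partition the primes in $(y_0,z]$ into consecutive ranges $I_1,\dots,I_R$ together with orders $N_1,\dots,N_R$ chosen so that products of at most $N_j$ primes from $I_j$ (and across the ranges, together with the mollifier) stay below $x^{\delta_0}$ for a fixed small $\delta_0>0$, so that summation over fundamental discriminants is diagonal-dominated; and then define the good set $\mathcal G$ of $|d|\le x$ on which $\big|\Re\sum_{p\in I_j}\tfrac{\lambda_f(p)\chi_d(p)}{\sqrt p}\big|\le \ell_j$ for every $j$, with $\ell_j\asymp N_j$ so that the elementary inequality $e^{u}\le 1+\sum_{0\le m\le N_j}u^m/m!$ holds there.

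\textbf{Step 3: main term, complement, and the obstacle.} On $\mathcal G$ one replaces each $\exp\!\big(l\Re\sum_{p\in I_j}\cdots\big)$ by a genuine Dirichlet polynomial, multiplies these together with $M(d;\tfrac1\kappa)^{l\kappa}$, and sums over fundamental discriminants using the standard fact that $\sumfund_{|d|\le x}\chi_d(n)$ has a main term $\asymp x$ when $n$ is a perfect square and is $\ll \sqrt{x}\,n^{o(1)}$ otherwise. The combined polynomial has length $\le x^{\delta_0}$, so the off-diagonal is $O(x^{1-\delta})$; the diagonal factors as an Euler product in which the small-prime factors that would otherwise telescope to a power of $\log x$ have been replaced by $O(1)$ by the mollifier, while each of the $R=O(1)$ surviving ranges contributes $\exp(O(l^2))$, giving $\ll x$. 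For $d\notin\mathcal G$ one picks a range with $\big|\Re\sum_{p\in I_j}\cdots\big|>\ell_j$; bounding the $2N_j$-th moment of that Dirichlet polynomial (again diagonal-dominated) gives $\#(\mathcal G^{c})\ll x(\log x)^{-A}$ for any fixed $A$, and then Cauchy--Schwarz together with the crude GRH bound $L(\tfrac12,f\otimes\chi_d)\ll \exp(c\log|d|/\log\log|d|)=x^{o(1)}$, the trivial bound on $M$, and a higher mollified moment of the same type shows the contribution of $\mathcal G^{c}$ is $\ll x^{1-\delta}$. The main difficulty lies in Step~2 and in the diagonal analysis of Step~3: one must choose the mollifier threshold $y_0$ (hence the shape of \eqref{eq:mollifierdef}), the Soundararajan weights $w_z$, the ranges $I_j$, and the orders $N_j$ compatibly, so that the small-prime Euler factors are genuinely neutralised \emph{and} every Dirichlet polynomial that appears has length $\le x^{\delta_0}$. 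Meeting both constraints at once is precisely what fails for a direct second-moment computation and is the reason the argument is built on Harper's range decomposition rather than on the Soundararajan--Young method.
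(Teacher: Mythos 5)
Your overall plan (a Chandee--Soundararajan pointwise bound under GRH, a Harper-type decomposition of the primes into ranges, and a reduction to diagonal terms after averaging over discriminants) is the same strategy as the paper's, but two concrete steps in your sketch fail, and they are exactly the points where the actual construction of \eqref{eq:mollifierdef} and Proposition \ref{prop:harper} does the work. First, the prime squares: in the explicit-formula inequality they enter with the Satake combination $\alpha_p^2+\beta_p^2=\lambda_f(p)^2-2=\lambda_f(p^2)-1$, not $\lambda_f(p^2)$, so their total is $\tfrac12\sum_{p\le z}(\lambda_f(p)^2-2)/p=-\tfrac12\log\log z+O(1)$, not $O(1)$ as you assert. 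This $(\log x)^{-l/2}$ saving is essential here, because the mollifier of \eqref{eq:mollifierdef} is not the object you describe: it carries the explicit normalization $(\log x)^{1/(2\kappa)}$ (so $M(d;\tfrac1\kappa)^{l\kappa}$ contributes $(\log x)^{l/2}$) and its Euler support runs over \emph{all} the ranges $I_0,\dots,I_J$ up to $x^{\theta_J}$, i.e.\ the same primes as the main-case application of the $L$-function inequality, not merely $p\le y_0$ with $y_0$ far below the cutoff. Discarding the prime-square term, your computation can only yield $\ll x(\log x)^{l/2}$ for the $M$ of \eqref{eq:mollifierdef}; the paper keeps the $-\tfrac12\log\log x$ (this is the factor $(A(d)\log x)^{l/2}$ on the left of \eqref{eq:harperineq}) and cancels it against the mollifier's $(\log x)^{l/2}$ in \eqref{eq:together}.

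Second, the treatment of the bad set. You handle the whole complement $\mathcal G^{c}$ at once and claim $\#\mathcal G^{c}\ll x(\log x)^{-A}$; this is only true for the first range, whose cutoff $\theta_0\asymp(\log\log x)^{-5}$ forces $\ell_0\to\infty$ with $x$. For the wider ranges both the variance $\sum_{p\in I_j}\lambda_f(p)^2/p$ and the admissible truncation length $\ell_j$ are bounded, so the event that $P_{I_j}(d;\cdot)$ exceeds its threshold has probability a fixed (small) constant and $\mathcal G^{c}$ has cardinality $\gg x$. Moreover, even on a set of size $x(\log x)^{-A}$ your proposed crude bound (pointwise $L=x^{o(1)}$ times the trivial bound on $M$, which by \eqref{eq:mollifierbd} is a small \emph{power} of $x$) cannot produce $x^{1-\delta}$, and replacing it by ``a higher mollified moment of the same type'' is circular. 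Harper's mechanism, implemented in Proposition \ref{prop:harper} and Lemma \ref{lem:avgbds}, is designed precisely to avoid this: when the polynomial over $I_{j+1}$ is the first large one, the Chandee inequality is re-applied with the \emph{shorter} cutoff $x^{\theta_j}$, the loss $(1/\theta_j)^{C_1}\exp(3l/\theta_j)$ is accepted, and one inserts the high even power $\big(e^2lP_{I_{j+1}}/\ell_{j+1}\big)^{s_{j+1}}\ge1$, keeping everything inside the Poisson-summed average against $D_j(d;l)M(d;\tfrac1\kappa)^{l\kappa}$ so that the factorial saving from $s_{j+1}!/\big(2^{s_{j+1}/2}\lfloor\tfrac38 s_{j+1}\rfloor!\,\ell_{j+1}^{s_{j+1}}\big)$ beats $\exp(3l/\theta_j)$; only the genuinely rare event that $P_{I_0}$ is large is removed by Chebyshev and Cauchy--Schwarz against Soundararajan's bound $\sumfund L^{2l}\ll x(\log x)^{O(1)}$. (A smaller point: the paper avoids off-diagonal error terms altogether by majorizing with a Schwartz function whose Fourier transform has compact support, so only exact squares survive in Lemma \ref{lem:avg}; your Polya--Vinogradov-style off-diagonal would also be fine, but it is not where the difficulty lies.) As written, your argument does not reach the stated bound $\ll x$.
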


Using the proposition above we can easily deduce Proposition \ref{prop:secondmoment}.

\begin{proof}[Proof of Proposition \ref{prop:secondmoment}]
Using \eqref{eq:waldspurger} it follows that
\[
\sum_{\substack{1 \le n \le x \\ 2n \text{ is } \square-\text{free}}} |c(8n)|^4 M((-1)^k 8n;\tfrac12)^4 \ll \sumfund_{|d| \le 8x} L(\tfrac12, f\otimes \chi_d)^2 M( d;\tfrac12)^4 \ll x
\]
where the sum is over all fundamental discriminants. \end{proof}

\subsection{Preliminary results.} 
In this section we introduce our mollifier for the Fourier coefficients of $g$ at fundamental discriminants. The shape of the mollifier is motivated by Harper's refinement \cite{Harper} of Soundararajan's \cite{SoundMoments} bounds for moments.

\vspace{.1 in}

\paragraph{\bf{Notation.}}
Let $\lambda(n)=(-1)^{\Omega(n)}$ denote the Liouville function (which should not be confused with $\lambda_f(n)$). Denote by $\nu(n)$ the multiplicative function with $\nu(p^a)=\frac{1}{a!}$ and write $\nu_{j}(n)=(\nu \ast \cdots \ast \nu)(n)$ for the $j$-fold convolution of $
\nu$. A useful observation is that
\begin{equation} \label{eq:nu}
\nu_j(p^a)=\frac{j^a}{a!},
\end{equation}
which may be proved by induction or otherwise. Also for an interval $I$ and $m \in \mathbb Z$ let
\[
P_{I}(m; a( \cdot) )=\sum_{\substack{p \in I }} \frac{a(p)}{\sqrt{p}} \left( \frac{m}{p}\right).
\]

\vspace{.1 in}

\vspace{.1 in}
    \paragraph{\textbf{An $L$-function inequality}.} We now prove an inequality in the spirit of Harper's work on sharp upper bounds for moments of $L$-functions, in the context of quadratic twists of $L$-functions attached to Hecke cusp forms. The upshot of the inequality is it essentially allows us to almost always bound the $L$-function by a very short Dirichlet polynomial. 

Let us now introduce the following notation, which will be used throughout this section. Let $l, \kappa>0$ be such that $l \cdot \kappa \in \mathbb N$ and suppose that $1 \le l \cdot \kappa \le C$.
Also, let $\eta_1, \eta_2>0$ be sufficiently small in terms of $C$.
For $j=0, \ldots, J$ let 
\begin{equation} \label{eq:elldef}
\theta_j= \eta_1 \frac{e^j}{(\log \log x)^{5}} \qquad  \mbox{ and } \qquad \ell_j=2 \lfloor  \theta_j^{-3/4}  \rfloor,
\end{equation} 
where $J$ is chosen so that $ \eta_2  \le \theta_J \le e \eta_2 $ (so $J \asymp \log \log \log x$).
For $j=1, \ldots, J$ let $I_j=(x^{\theta_{j-1}}, x^{\theta_j}]$ and set $I_0=(c, x^{\theta_0}]$, where $c \ge 2$ is fixed. Note that the choice of parameters $\theta_{j}, \ell_j$ depends on $C$ and is independent of $l, \kappa$ satisfying $1 \le l \cdot \kappa \le C$.

% {\tt I changed the definition of $I_0$ here. There is some annoying technical issue with the lower bound for the first mollified moment that the Euler product might equal zero since it's possible one of the factors is zero for $p$ small.}

For $j=0, \ldots, J$ and $t>0$ let 
\begin{equation} \notag
w(t;j)=\frac{1}{t^{\theta_j \log x}} \left(1-\frac{\log t}{\theta_j \log x} \right)
\end{equation}
and define the completely multiplicative function $a(\cdot; j)$ by
\begin{equation} \label{eq:adef}
a(p;j)=\lambda_f(p) w(p;j).
\end{equation}
The smooth weights $w(\cdot; j)$ appear for technical reasons and their effect is mild.

  Let $\ell$ be an positive even integer. For $t \in \mathbb R$, let
  \[
  E_{\ell}(t)=\sum_{s \le \ell} \frac{t^s}{s!}.
  \]
  Note $E_{\ell}(t) \ge 1$ if $t \ge 0$ and $E_{\ell}(t)>0$ for any $t \in \mathbb R$ since $\ell$ is even; the latter inequality may be seen using the Taylor expansion for $e^t$.
Moreover, using the Taylor expansion it follows for $t \le \ell/e^2$  that 
\begin{equation} \label{eq:taylor}
e^t \le (1+e^{-\ell/2})E_{\ell}(t).
\end{equation}

For each $j=0,\ldots, J$ let
\begin{equation} \label{eq:ddef}
D_j(m;l)=\prod_{r=0}^j (1+e^{-\ell_j/2}) E_{\ell_r}(lP_{I_r}(m;a( \cdot;j)))
\end{equation}
and note $D_j(m;l)>0$, since each term in the product is $>0$.
A useful observation is that for a positive integer $s$
\begin{equation} \label{eq:PID}
\begin{split}
    P_I(m; a(\cdot))^s=&\sum_{p_1, \ldots, p_s \in I} \frac{a(p_1 \cdots p_s)}{\sqrt{p_1 \cdots p_s}} \left( \frac{m}{p_1 \cdots p_s}\right)\\
    =&\sum_{\substack{p|n \Rightarrow p \in I \\ \Omega(n)=s}} \frac{a(n)}{\sqrt{n}} \left( \frac{m}{n} \right) \sum_{p_1\cdots p_s=n} 1 \\
    =& s! \sum_{\substack{p|n \Rightarrow p \in I \\ \Omega(n)=s}} \frac{a(n)}{\sqrt{n}} \left( \frac{m}{n} \right) \nu(n).
    \end{split}
\end{equation}
Additionally,
for any real number $l \neq 0$
\begin{equation} \label{eq:id}
\begin{split}
E_{\ell}(l P_{I}(m))=&\sum_{s \le \ell} \frac{l^s}{s!}  P_{I}(m)^s\\
=&\sum_{\substack{p|n \Rightarrow p \in I \\ \Omega(n) \le \ell}} \frac{ l^{\Omega(n)}  a(n) \nu(n)}{\sqrt{n}}  \left( \frac{m}{n} \right).
\end{split}
\end{equation}

We are now ready to state our main inequality for $L(\tfrac12,f\otimes \chi_d)$.
\begin{proposition} \label{prop:harper}
Assume GRH. Suppose that $ 1 \le |d| \le x$ is a fundamental discriminant and $l>0$ is a real number.
Also let
\[
A(d)=A(d;f)=\prod_{\substack{p|d \\ p>3}}\left(1+ \frac{\lambda_f(p^2)-1}{p}\right).
\]
Let $\ell_j,\theta_j, I_j$ and $a(\cdot,j)$ be as in \eqref{eq:elldef} and \eqref{eq:adef} (resp.).
Then there exists a sufficiently large absolute constant $C_1=C_1(l)>0$ such that for each $|d| \le x$ either:
\begin{equation}
|P_{I_{0}}(d;a(\cdot, j))| \ge \frac{ \ell_0}{l e^2}
\end{equation}
for some $0 \le j \le J$
or
\begin{equation} \label{eq:harperineq}
\begin{split}
&\left(A(d) \log x \right)^{l/2}
L(\tfrac12, f\otimes \chi_d)^l \\
&  \ll     D_J(d;l)+ 
\sum_{\substack{0 \le j \le J-1 \\ j+1 \le u \le J}}  \left(\frac{1}{\theta_{j}}\right)^{C_1} \exp\Big({\frac{3l}{\theta_{j}}} \Big) D_{j}(d;l) \left( \frac{ e^2 l P_{I_{j+1}}(d; a( \cdot , u))}{ \ell_{j+1}}\right)^{ s_{j+1}} ,
\end{split}
\end{equation}
for any even integers $s_1,\ldots,s_{J}$,
where the implied constant depends on $f$ and $l$.

\end{proposition}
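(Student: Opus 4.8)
The plan is to follow Harper's strategy for conditional upper bounds on moments, adapted to the quadratic-twist family $L(\tfrac12, f\otimes\chi_d)$. The starting point is a GRH-conditional upper bound for $\log L(\tfrac12,f\otimes\chi_d)$ in terms of a short prime sum: under GRH one has
\[
\log L(\tfrac12, f\otimes\chi_d) \le \sum_{p} \frac{\lambda_f(p)\chi_d(p)}{\sqrt p}\, w_0(p) - \frac{l'}{2}\log\log x + O(1)
\]
for a suitable smoothing, where the $-\tfrac{l'}{2}\log\log x$ term (coming from the $p^2$ contribution, whose main term is $-\tfrac12\sum_{p\nmid d}\frac1p$ plus the correction captured by $A(d)$) is exactly what produces the factor $(A(d)\log x)^{l/2}$ on the left-hand side of \eqref{eq:harperineq}. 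So first I would record this explicit-formula/Riemann–von Mangoldt-type bound, isolate the $A(d)$ factor by treating primes dividing $d$ separately, and reduce to bounding $\exp\big(l\sum_p \tfrac{\lambda_f(p)\chi_d(p)}{\sqrt p} w_0(p)\big)$.

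Next comes the dyadic decomposition of the prime sum over the intervals $I_0,\dots,I_J$ defined in \eqref{eq:elldef}, with the weights $w(\cdot;j)$ absorbing the tail beyond $x^{\theta_j}$. Writing $\exp\big(l\sum_p\cdots\big) = \prod_{r=0}^{J}\exp\big(l P_{I_r}(d;a(\cdot;u))\big)$ for the appropriate truncation index $u$, I would then run the standard case analysis: either the first block $P_{I_0}$ is ``large'', i.e. $|P_{I_0}(d;a(\cdot,j))|\ge \ell_0/(le^2)$ for some $j$ — this is the first alternative in the Proposition, to be discarded later via a moment bound on that short sum — or else every $P_{I_r}$ is small relative to $\ell_r/(le^2)$, in which case \eqref{eq:taylor} lets me replace $\exp(lP_{I_r})$ by $(1+e^{-\ell_r/2})E_{\ell_r}(lP_{I_r})$, giving the main term $D_J(d;l)$. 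If some block in the chain is the \emph{first} one to be large, say $I_{j+1}$, I multiply and divide by a high power $\big(e^2 l P_{I_{j+1}}/\ell_{j+1}\big)^{s_{j+1}}$ (a quantity $\ge 1$ by largeness) to convert the badly-behaved exponential $\exp(lP_{I_{j+1}})$ into $\exp(3l/\theta_{j+1})$-type bounds times this power, which is exactly the error-term structure in \eqref{eq:harperineq}; the polynomial factor $(1/\theta_j)^{C_1}$ and $\exp(3l/\theta_j)$ come from crudely bounding $\exp(lP_{I_r})$ on the remaining blocks $r>j+1$ using the trivial bound $|P_{I_r}(d)|\ll \theta_r^{-1}$ (valid since the primes in $I_r$ have $\sqrt p \ge x^{\theta_{r-1}/2}$ and there are $O(x^{\theta_r})$ of them, or more simply via the $w$-weight).

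I would organize the write-up as: (i) state and cite the GRH bound for $\log L$; (ii) peel off the $A(d)$ factor; (iii) dyadically decompose and insert the weights; (iv) do the ``first large block'' case analysis, using \eqref{eq:taylor} on the small blocks and the multiply-divide trick on the critical block; (v) collect the crude bounds on the trailing blocks into the $(1/\theta_j)^{C_1}\exp(3l/\theta_j)$ factors. The main obstacle I anticipate is bookkeeping the dependence on the truncation index $u$ versus $j$ in $a(\cdot;u)$ and $P_{I_{j+1}}(d;a(\cdot,u))$ — one must be careful that the smoothing weight used is the one matching the \emph{longest} block still in play, and that the largeness hypothesis transfers consistently between the differently-weighted versions $P_{I_0}(d;a(\cdot,j))$; handling this, together with verifying that $l P_{I_r} \le \ell_r/e^2$ really does hold on the non-critical small blocks so that \eqref{eq:taylor} applies, is where the care is needed. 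The precise value of $C_1=C_1(l)$ is harmless: it only needs to be large enough to dominate the finitely many polynomial losses per block summed over $O(\log\log\log x)$ blocks.
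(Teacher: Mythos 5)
Your overall architecture matches the paper's: a GRH (Chandee/Soundararajan-type) upper bound for $\log L(\tfrac12,f\otimes\chi_d)$, separation of the prime-square contribution to produce the factor $(A(d)\log x)^{l/2}$, decomposition into the blocks $I_0,\dots,I_J$, the ``first large block'' case analysis, and replacement of $\exp(lP_{I_r})$ by $(1+e^{-\ell_r/2})E_{\ell_r}(lP_{I_r})$ via \eqref{eq:taylor} on the small blocks. However, your treatment of the case where the chain of smallness events breaks at $I_{j+1}$ --- which is the heart of the proposition --- does not work as described. First, the ``trivial bound $|P_{I_r}(d)|\ll \theta_r^{-1}$'' for the trailing blocks $r>j$ is false: trivially $|P_{I_r}(d)|\le \sum_{p\in I_r}2/\sqrt{p}\asymp x^{\theta_r/2}/(\theta_r\log x)$, which is exponentially large, and any bound of the shape $\ll 1/\theta_r$ would itself require a separate GRH argument, not the $w$-weights (with weight index $u\ge j+1$ the weight does not kill primes in $(x^{\theta_j},x^{\theta_u}]$). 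Second, the multiply-and-divide step cannot ``convert'' $\exp(lP_{I_{j+1}})$ into anything: there is no pointwise control of that exponential on the bad block, and the factor $\bigl(e^2 l P_{I_{j+1}}(d;a(\cdot,u))/\ell_{j+1}\bigr)^{s_{j+1}}$ in \eqref{eq:harperineq} is inserted purely because it is $\ge 1$ (even exponent, and $\ge 1$ for the maximizing $u$ by the failure of the smallness event), not as a substitute for the exponential.

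The missing idea is that when the chain breaks at $I_{j+1}$ one re-applies the GRH inequality \eqref{eq:soundbound} with the \emph{shorter} truncation $X=x^{\theta_j}$, so that the critical block and all later blocks simply never appear in the exponential; this is also why the events must be defined with $\max_{r\le u\le J}|P_{I_r}(d;a(\cdot;u))|$, so that smallness holds for the weight $a(\cdot;j)$ actually used in $D_j(d;l)$ while largeness of the critical block holds for some $a(\cdot;u)$, $u\ge j+1$. The penalty factors then have a different origin than you assign them: $\exp(3l/\theta_j)$ comes from the term $3\log|d|/\log X\le 3/\theta_j$ in \eqref{eq:soundbound} at the shorter length, and $(1/\theta_j)^{C_1}$ comes from the $O(\log(1/\theta_j))$ error in the prime-square evaluation \eqref{eq:squares} when $X=x^{\theta_j}$ (one still extracts $-\tfrac12\log\log x$, not $-\tfrac12\log\log x^{\theta_j}$, at that cost). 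Your case (ii) and case (i) analyses, and the final passage from a maximum over $u$ to the sum over $u$ in \eqref{eq:harperineq}, are fine; but as written the critical case rests on a false bound and would need to be replaced by the shorter-truncation argument above.
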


\begin{proof}
Applying Theorem 2.1 of Chandee \cite{Phi} gives, for any $X \ge 10$, that
\begin{equation} \label{eq:soundbound}
L(\tfrac12, f \otimes \chi_d) \le \exp\left( \sum_{p^n \le X} \frac{\chi_d(p) (\alpha_p^n +\beta_p^n)}{n p^{n(\frac12 +\frac{1}{\log X})}} \frac{\log X/p^n}{\log X}+3 \cdot \frac{\log |d|}{\log X}+O(1) \right),
\end{equation}
where $\alpha_p,\beta_p$ are the Satake parameters (note $\alpha_p^n+\beta_p^n$ is real valued see Remark 2 of \cite{LR}). In the sum over primes, the contribution from the prime powers with $n \ge 3$ is bounded so that it may be included in the $O(1)$ term. Also, noting that $\alpha_p^2+\beta_p^2=\lambda_f(p^2)-1$ the squares of primes contribute
\begin{equation} \label{eq:squares}
\frac{1}{2} \cdot \sum_{\substack{p \le X \\ p \nmid d}} \frac{\lambda_f(p^2)-1}{p^{1+\frac{2}{\log X}}} \frac{\log X/p^2}{\log X}=-\frac{1}{2}\log \log X-\sum_{\substack{p \le X \\ p|d}} \frac{(\lambda_f(p^2)-1)}{2p}+O\left(1\right)
\end{equation}
where the implied constant depends on $f$.

For $r=0, \ldots, J$, let $\mathcal E_r$ be the set of fundamental discriminants $d$ such that $\max_{r \le  u \le J} |P_{I_r}(d;a(\cdot; u))| < \frac{ \ell_r}{ l e^2}$. 
For each $d$ we must have one of the following: (i) $d \notin \mathcal E_0$; (ii) $d \in \mathcal E_r$ for each $0 \le r \le J$; (iii) there exists $0 \le j \le J-1$ such that $d \in \mathcal E_r$ for $0 \le r \le j$ and $d \notin \mathcal E_{j+1}$. 
Hence, we get for each fundamental discriminant with $|d| \le x$
either:
\begin{eqnarray}
\max_{0 \le  u \le J} |P_{I_0}(d;a(\cdot; u))| &\ge& \frac{ \ell_0}{ l e^2}; \label{eq:conditionone} \\ 
\max_{j \le u \le J} |P_{I_j}(d;a(\cdot; u))| &<& \frac{ \ell_j}{ l e^2} \qquad \text{ for each } \qquad j =0, \ldots, J; \label{eq:conditiontwo}
\end{eqnarray}
or
for some $j=0, \ldots, J-1$ 
\begin{equation} \label{eq:conditionthree}
\max_{r \le u \le J} |P_{I_r}(d;a(\cdot; u))|  <  \frac{ \ell_r}{l e^2} \qquad \text{for each} \qquad r \le j
\end{equation}
and
\begin{equation} \label{eq:tbd}
\max_{j+1 \le u \le J} |P_{I_{j+1}}(d;a(\cdot; u))| \ge  \frac{ \ell_{j+1}}{ l e^2}.
\end{equation}
Here \eqref{eq:conditionone}, \eqref{eq:conditiontwo} correspond to possibilities (i) and (ii) above (resp.), while \eqref{eq:conditionthree} and \eqref{eq:tbd} corresponds to (iii).

If \eqref{eq:conditionone} holds for $d$ then we are done. If \eqref{eq:conditiontwo} holds, we apply \eqref{eq:soundbound} with $X=x^{\theta_J}$ and so the term $3 \cdot \frac{\log |d|}{\log X}$ is $\ll 1$. Also, the contribution from \eqref{eq:squares} is
\[
-\tfrac12\log \log x-\sum_{\substack{p \le X\\p|d}} \frac{(\lambda_f(p^2)-1)}{2p}+O(1).
\]
Hence, applying these estimates along with \eqref{eq:taylor} we get that
\begin{equation} \label{eq:case1bd}
\begin{split}
\left(A(d) \log x \right)^{l/2}
L(\tfrac12, f\otimes \chi_d)^l \ll& \exp\left( l \sum_{p \le x^{\theta_J}} \frac{a(p;J)}{\sqrt{p}} \chi_d(p)\right)\\
\ll &\prod_{j=0}^{J} \exp\left(l P_{I_j}(d;a(\cdot;J)) \right) \\
\le& \prod_{j=0}^J (1+e^{-\ell_j/2}) E_{\ell_j}(lP_{I_j}(m;a( \cdot;J)))
= D_J(d;l).\\
\end{split}
\end{equation}

Finally, if \eqref{eq:conditionthree} and \eqref{eq:tbd} hold
we apply \eqref{eq:soundbound} with $X=x^{\theta_j}$ 
and for each $0 \le r \le j$ we argue as above and bound the term $\exp(lP_{I_r}(d; a(\cdot; j)))$ by $(1+e^{-\ell_r/2})E_{\ell_j}(lP_{I_r}(d;a(\cdot, j)))$. 
In \eqref{eq:soundbound} we use the inequality $3 \cdot \frac{\log |d|}{\log X}\le \frac{3}{\theta_j}$ and note that the contribution from the squares of primes \eqref{eq:squares} is 
$$-\tfrac{1}{2}\log \log x-\sum_{\substack{ p|d}} \frac{(\lambda_f(p^2)-1)}{2p}+O\left(\log \frac{1}{\theta_j}\right).$$
So that applying these estimates, we get that for any non-negative, even integer $s_{j+1}$
\begin{equation} \label{eq:case2bd}
\begin{split}
& \left(A(d) \log x \right)^{l/2}
L(\tfrac12, f\otimes \chi_d)^l \\
& \qquad \ll \left(\frac{1}{\theta_{j}}\right)^{C_1} \exp(3l/\theta_{j})  D_{j}(d;l) \max_{j+1 \le u \le J}\left( \frac{ e^2 l P_{I_{j+1}}(d; a( \cdot , u))}{ \ell_{j+1}}\right)^{ s_{j+1}} ,
\end{split}
\end{equation}
where we have included the extra factor 
\[
\max_{j+1 \le u \le J}\left( \frac{ e^2 l P_{I_{j+1}}(d; a( \cdot , u))}{ \ell_{j+1}}\right)^{ s_{j+1}} \ge 1.
\]

Combining \eqref{eq:case1bd} and \eqref{eq:case2bd} and using the inequality $\max(a,b) \le a+b$ for $a,b>0$ gives \eqref{eq:harperineq}.
\end{proof}

\paragraph{\textbf{The definition of the mollifier.}}

With Proposition \ref{prop:harper} in mind we now choose our mollifier so that it will counteract the large values of $D_J(d;l)$. For $j=0, \ldots, J$ let
\[
M_j(m;\tfrac{1}{\kappa})=\sum_{\substack{p|n \Rightarrow p \in I_j \\ \Omega(n) \le \ell_j}} \frac{\kappa^{-\Omega(n)} a(n;J) \lambda(n)}{\sqrt{n}} \nu(n) \left( \frac{m}{n}\right),
\]
where $\ell_j,\theta_j$ and $I_j$ are as in \eqref{eq:elldef}.
Let
\begin{equation} \label{eq:mollifierdef}
M(m; \tfrac{1}{\kappa})
= (\log x)^{1/(2\kappa)}\prod_{0\le j \le J} M_j(m;\tfrac{1}{\kappa});
\end{equation}
and note that in Propositions \ref{prop:secondmoment} and \ref{prop:firstmoment} we specialize $I_0=(c,x^{\theta_0}]$ so that $c$ is sufficiently large, yet fixed.
% {\tt I added an awkward comment here to resolve this issue with the lower bound. Alternatively could just let $c$ be sufficiently large from the outset, but $c$ is sufficiently large in terms of constants from the twisted first moment calculation}
A useful observation is that $M(m;\tfrac{1}{\kappa})>0$, which can be seen by using \eqref{eq:id} along with the fact that $E_{\ell}(t)>0$ for even $\ell$ and $t\in \mathbb R$.
Also, let $\delta_0=\sum_{j \le J } \ell_j \theta_j$.  Observe that by construction
$\delta_0>0$ is fixed, sufficiently small and the length of the Dirichlet polynomial $M(m;\frac{1}{\kappa})$ is $x^{\delta_0}$. Also,
\begin{equation} \label{eq:mollifieropen}
    M(m;\tfrac{1}{\kappa})^{l \cdot \kappa}=(\log x)^{l/2} \sum_{n \le x^{l \cdot \kappa \delta_0}} \frac{h(n) a(n;J) \lambda(n)}{\kappa^{\Omega(n) } \sqrt{n}} \left( \frac{m}{n}\right) 
\end{equation}
where
\begin{equation} \label{eq:hdef}
h(n)=\sum_{\substack{n_0 \cdots n_J=n \\ p|n_0 \Rightarrow p \in I_0, \ldots, p|n_J \Rightarrow p \in I_J \\ \Omega(n_0) \le l \cdot \kappa \ell_0,\ldots, \Omega(n_J) \le l \cdot \kappa \ell_J}} \nu_{l \cdot \kappa}(n_0; \ell_0) \cdots  \nu_{l \cdot \kappa}(n_J; \ell_J).
\end{equation}
and for $n,r, \ell \in \mathbb N$
\begin{equation} \label{eq:nualt}
\nu_{r}(n;\ell)= \sum_{\substack{n_1 \cdots n_r= n \\ \Omega(n_1) \le \ell, \ldots, \Omega(n_r) \le \ell}} \nu(n_1) \cdots \nu(n_r).
\end{equation}
Note that $\nu_r(n;\ell) \le \nu_r(n)$ and if $\Omega(n) \le \ell$ then $\nu_r(n;\ell)=\nu_r(n)$.
Also,
\begin{equation} \label{eq:mollifierbd}
    M(m;\tfrac{1}{\kappa})^{l \cdot \kappa} \ll x^{(l \cdot \kappa) \delta_0} (\log x)^{l/2}. 
\end{equation}

\subsection{The Proof of Proposition \ref{prop:moments}}

We first require the following fairly standard lemma, which follows from Poisson summation.
\begin{lemma} \label{lem:avg}
Let $F$ be a Schwartz function such that its Fourier transform $\widehat F$ has compact support in $(-A,A)$, for some fixed $A>0$. Also, let $I_0,\ldots, I_J \subset [1,x]$ be disjoint intervals of the form $I_0=(c, x^{\theta_0}]$, $I_j=(x^{\theta_{j-1}}, x^{\theta_j}]$ where $\theta_0,\ldots, \theta_J \in \mathbb R$ and $c \ge 1$ is fixed. Suppose $\ell_0, \ldots, \ell_J\ge 1$ are real numbers such that $\sum_{j=0}^J \ell_j \theta_j <1/2$. 
 Then for any arithmetic function $g$ we have
\begin{equation} \label{eq:easysum}
\sum_{m \in \mathbb Z} \prod_{0 \le j \leq J}  \sum_{ \substack{ p| n \Rightarrow p \in I_j \\ \Omega(n) \le \ell_j}  } \frac{g(n)  \left( \frac{m}{n}\right)}{\sqrt{n}}  F\left( \frac{m}{x}\right)
=\widehat F(0) x
\prod_{0 \le j \leq J} \sum_{\substack{ p| n \Rightarrow p \in I_j \\ \Omega(n^2) \le \ell_j} } \frac{g(n^2)}{n} \frac{\varphi(n^2)}{n^2}.
\end{equation}

\end{lemma}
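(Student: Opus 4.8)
The plan is to prove the identity \eqref{eq:easysum} by expanding the product over $j$ into a single sum over $n$ supported on squarefree-in-each-block integers, interchanging the (finite) sum over $n$ with the sum over $m \in \mathbb{Z}$, and then evaluating the inner sum $\sum_{m \in \mathbb{Z}} \left(\frac{m}{n}\right) F(m/x)$ by Poisson summation. First I would note that for each $j$ the inner sum over $n$ with $p \mid n \Rightarrow p \in I_j$ and $\Omega(n) \le \ell_j$ is a finite sum, and the product over $0 \le j \le J$ collapses (since the $I_j$ are disjoint) to a single sum $\sum_n \frac{G(n)}{\sqrt n}\left(\frac{m}{n}\right)$ where $G(n) = \prod_j g(n_j)$ for the unique factorization $n = n_0 \cdots n_J$ with $n_j$ supported on $I_j$, and where the constraint is $\Omega(n_j) \le \ell_j$ for each $j$. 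The crucial point for convergence/interchange is that every such $n$ satisfies $n \le \prod_j (x^{\theta_j})^{\ell_j} = x^{\sum_j \ell_j \theta_j} < x^{1/2}$, so the outer sum is finite and absolutely convergent uniformly in $m$.

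Next I would evaluate, for each fixed $n$, the character sum $S(n) := \sum_{m \in \mathbb{Z}} \left(\frac{m}{n}\right) F(m/x)$. Writing the Kronecker symbol $m \mapsto \left(\frac{m}{n}\right)$ as a function of $m$ that is periodic modulo $n$ (or modulo $4n$, but since the $I_j \subset [1,x]$ and everything is built from squarefree pieces, and since $\left(\frac{\cdot}{n}\right)$ for $n$ a perfect square is the principal character mod $n$ — this is where the appearance of $n^2$ on the right comes from), I would split $m$ into residue classes $a \pmod{n}$ and apply Poisson summation to each arithmetic progression:
\[
\sum_{\substack{m \equiv a \,(n)}} F(m/x) = \frac{x}{n} \sum_{t \in \mathbb{Z}} \widehat F(tx/n) e(at/n).
\]
Because $\widehat F$ is supported in $(-A,A)$ and $n < x^{1/2} \ll x/A$ for $x$ large, only the term $t = 0$ survives, giving $\sum_{m \equiv a\,(n)} F(m/x) = \frac{x}{n}\widehat F(0) + O(\text{negligible})$. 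Hence $S(n) = \frac{x}{n}\widehat F(0) \sum_{a \bmod n}\left(\frac{a}{n}\right) + (\text{negligible})$. The main arithmetic input is then the standard fact that $\sum_{a \bmod n}\left(\frac{a}{n}\right)$ equals $\varphi(n)$ when $n$ is a perfect square and $0$ otherwise (the Kronecker symbol $\left(\frac{\cdot}{n}\right)$ is a nonprincipal real character mod $n$ unless $n = \square$). This forces $n$ to be a square, say $n = m_0^2$ for the block decomposition, which produces exactly the right-hand side once one re-indexes each block by writing $n_j = $ (something)$^2$; the condition $\Omega(n_j) \le \ell_j$ becomes $\Omega(n^2) \le \ell_j$ in the re-indexed variable, and $\frac{G(n)}{\sqrt n} \cdot \frac{\varphi(n)}{n}$ becomes $\prod_j \frac{g(n_j^2)}{n_j}\frac{\varphi(n_j^2)}{n_j^2}$.

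The main obstacle, and the only step requiring genuine care, is bookkeeping around the Kronecker symbol: for even $n$ (and in our application $n$ is built from odd primes in $I_j$, but one should handle the general statement) one must be careful that $\left(\frac{\cdot}{n}\right)$ has the correct conductor, and that after Poisson summation the truncation $n < x^{1/2}$ genuinely kills all $t \ne 0$ — this uses $\sum_j \ell_j \theta_j < 1/2$ with a little room to spare, so that $x/n > x^{1/2} > A$ for large $x$ and $\widehat F(tx/n) = 0$ for $t \ne 0$. A secondary (purely cosmetic) point is confirming that the error terms from Poisson summation are not merely small but exactly zero for $x$ large enough given that $\widehat F$ has \emph{compact} support; this is what lets the final identity be an exact equality rather than an asymptotic. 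Everything else — the interchange of summation, the collapse of the product over blocks, the Gauss-sum-type evaluation — is routine.
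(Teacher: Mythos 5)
Your proposal is correct and follows essentially the same route as the paper: expand the product into a sum over block-factored $n=n_0\cdots n_J < x^{1/2}$, evaluate $\sum_{m\in\mathbb Z}\left(\frac{m}{n}\right)F(m/x)$ by Poisson summation (the paper simply cites this evaluation, namely that it equals $\widehat F(0)\,x\,\varphi(n)/n$ for $n=\square$ and vanishes otherwise, while you rederive it), and use the disjointness of the $I_j$ so that $n=\square$ forces each block $n_j$ to be a square, giving the stated right-hand side.
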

\begin{proof}
The LHS of \eqref{eq:easysum} equals
\begin{equation} \label{eq:calc}
\sum_{ \substack{n_0, \ldots, n_J \in \mathbb N\\ p | n_0 \Rightarrow p \in I_0, \ldots, p|n_J \Rightarrow p \in I_j\\ \Omega(n_0) \le \ell_0, \ldots, \Omega(n_J) \le \ell_J}} \frac{g(n_0) \cdots g(n_J)}{\sqrt{n_0 \cdots n_J}} \sum_m \left( \frac{m}{n_0 \cdots n_J} \right) F\left(\frac{m}{x} \right) .
\end{equation}
Using Poisson summation, see Remark 1 of \cite{LR}, we get that for $n \le X^{1/2}$ if $n=\square$ that
\[
\sum_{m} \left( \frac{m}{n} \right) F\left(\frac{m}{x} \right)= \widehat F(0) x \frac{\varphi(n)}{n}
\]
and for $n \neq \square$ the above sum equals zero.
By assumption $n_0 \cdots n_J \le x^{\ell_0 \theta_0} \cdots x^{\ell_J \theta_J} < x^{1/2}$. Hence,
the inner sum in \eqref{eq:calc} equals zero unless $n_0 \cdots n_J =\square$ and since $(n_j,n_i)=1$ for $i \neq j$ in this case $n_0, \ldots, n_J =\square$. We conclude that the inner sum in \eqref{eq:calc} equals
\[
\widehat F(0) x \frac{\varphi(n_0 \cdots n_J)}{n_0 \cdots n_J}=\widehat F(0) x \frac{\varphi(n_0) \cdots \varphi(n_J)}{n_0 \cdots n_J}
\]
when $n_0, \ldots, n_J=\square$ and is zero otherwise, thereby giving the claim.
\end{proof}

To prove Proposition \ref{prop:moments} we will use Proposition \ref{prop:harper} and then apply Lemma \ref{lem:avg}. This leaves us with the problem of bounding the resulting sum. This task will be accomplished in the next two lemmas. 

Before stating the next lemma let us introduce the following notation. Given a statement $S$ let $1_S$ equal one if $S$ is true and zero otherwise.

\begin{lemma} \label{lem:diag} Let 
% $\ell_j,\theta_j$ and $I_j$ be as in \eqref{eq:elldef} and 
$a(\cdot),b(\cdot)$
be real valued, completely multiplicative functions with $a(p),b(p) \ll 1$.
 Then for each $j=0, \ldots, J$
\begin{equation} \label{eq:diagbd}
\begin{split}
& \bigg|\sum_{\substack{mn=\square \\ p|mn \Rightarrow p \in I_j \\ \Omega(m) \le  (l \cdot \kappa) \ell_j, \Omega(n) \le \ell_j }} \frac{l^{\Omega(n)} a(m)b(n) \lambda(m)}{\kappa^{\Omega(m)} \sqrt{mn}} \nu_{l \cdot \kappa}(m; \ell_j) \nu(n)  \cdot \frac{\varphi(mn)}{mn} \bigg| \\
& \qquad \qquad \le \left(1+O\left(\frac{1_{j=0} \cdot (\log x)^{O(1)}+1}{2^{\ell_j}} \right) \right) \prod_{\substack{p \in I_j }} \left(1-l^2\frac{\alpha(p)}{p}+O\left( \frac{1}{p^2}\right) \right),
\end{split}
\end{equation}
where
\begin{equation} \label{eq:alphadef}
\alpha(p)=\alpha(p;a(\cdot),b(\cdot))= a(p)b(p)-\tfrac12 a(p)^2-\tfrac12 b(p)^2.
\end{equation}
\end{lemma}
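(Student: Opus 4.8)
We need to establish the bound on that diagonal sum. Let me think about the structure first. We have $mn = \square$, both $m$ and $n$ supported on primes in $I_j$. The key constraint: since $mn$ is a perfect square, for each prime $p \in I_j$, $v_p(m) + v_p(n)$ is even. We want a local (Euler product) factorization.

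So the first step: I want to factor the sum over $p \in I_j$ into a product of local sums at each prime $p$. The issue: the conditions $\Omega(m) \le (l\kappa)\ell_j$ and $\Omega(n) \le \ell_j$ are not multiplicative (they're global constraints on total number of prime factors). And $\nu_{l\kappa}(m;\ell_j)$ is itself a restricted convolution. So the first sub-task: argue that dropping these $\Omega$ constraints (i.e. replacing $\nu_{l\kappa}(m;\ell_j)$ by $\nu_{l\kappa}(m)$ and removing $\Omega(m)\le(l\kappa)\ell_j$, $\Omega(n)\le\ell_j$) introduces an error of size $O(2^{-\ell_j})$ (times the extra $(\log x)^{O(1)}$ when $j=0$, where $I_0$ contains many small primes so the "main term" product is large and one needs the crude count). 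This is a "completion" argument: the terms removed have $\Omega \ge \ell_j$ somewhere, and one bounds the tail of the relevant exponential-type series by $2^{-\ell_j}$ via the Rankin trick / the observation $\nu_r(p^a) = r^a/a!$ from \eqref{eq:nu} and the bound $\sum_{a > \ell} t^a/a! \ll 2^{-\ell}$ for $t$ bounded — together with $\sum_{p\in I_j} 1/\sqrt{p}$ being small when $j\ge 1$ (since $I_j$ is high up), and absorbed into $(\log x)^{O(1)} 2^{-\ell_j}$ when $j=0$.

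Second step: with the constraints removed, the sum becomes a genuine Euler product over $p \in I_j$. At each prime $p$, the local factor is a sum over pairs $(a,b)$ of nonnegative integers with $a+b$ even, weighted by $l^b a(p)^a b(p)^b \lambda(p)^a \kappa^{-a} p^{-(a+b)/2} \nu_{l\kappa}(p^a)\nu(p^b) \cdot \varphi(p^{a+b})/p^{a+b}$. Using $\nu(p^b) = 1/b!$, $\nu_{l\kappa}(p^a) = (l\kappa)^a/a!$ from \eqref{eq:nu}, and $\lambda(p)^a = (-1)^a$, this local factor is $\sum_{a+b \text{ even}} \frac{(-l)^a a(p)^a}{a!}\frac{(l b(p))^b}{b!} p^{-(a+b)/2}(1 - 1/p)^{[a+b\ge 1]}$. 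The terms with $a+b=0$ give $1$; the terms with $a+b=2$ give $\frac{1}{p}(1-\tfrac1p)\big(\tfrac{l^2 a(p)^2}{2} - l^2 a(p)b(p) + \tfrac{l^2 b(p)^2}{2}\big) = -l^2\alpha(p)/p \cdot(1 + O(1/p))$ with $\alpha(p)$ as in \eqref{eq:alphadef}; and all terms with $a+b \ge 3$ (so $\ge 4$, since $a+b$ even) contribute $O(1/p^2)$ because $a(p),b(p),l,\kappa \ll 1$. Hence the local factor is $1 - l^2\alpha(p)/p + O(1/p^2)$, giving the claimed product.

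The main obstacle is the first step — the completion argument handling the truncations $\Omega \le \ell_j$ and the restricted convolution $\nu_{l\kappa}(\,\cdot\,;\ell_j)$ versus $\nu_{l\kappa}(\,\cdot\,)$, and in particular getting the right shape of the error term: a clean $O(2^{-\ell_j})$ for $j \ge 1$ but with the extra $(\log x)^{O(1)}$ factor for $j = 0$. One must be careful that when $j=0$ the Euler product $\prod_{p\in I_0}(1 - l^2\alpha(p)/p + O(1/p^2))$ can be as large as a power of $\log x$ (since $\sum_{p \le x^{\theta_0}} 1/p \asymp \log\log x$ and $-l^2\alpha(p)$ can have a sign making the product large), so the additive completion error must be compared against it — that is precisely why the statement isolates $1_{j=0}(\log x)^{O(1)}$. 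The second step is a routine local computation once the factorization is legitimate.
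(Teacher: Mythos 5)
Your overall architecture is the same as the paper's: complete the truncated sum at a cost of $O(2^{-\ell_j})$ (with an extra $(\log x)^{O(1)}$ when $j=0$) using the Rankin-type insertion $2^{\Omega(m)+\Omega(n)-\ell_j}\ge 1$ on the discrepancy terms, then evaluate the completed sum by an Euler product whose local factor is $1-l^2\alpha(p)/p+O(1/p^2)$. Your second step is fine and in fact slightly cleaner than the paper's: the paper extracts the Euler product by writing $m=gm_1^2$, $n=gn_1^2$ with $g=(m,n)$ and then juggling the auxiliary factors $\sigma(p;u,v)$, $f_1$, $f_2$, whereas you factor the bivariate sum directly over pairs $(v_p(m),v_p(n))=(a,b)$ with $a+b$ even; your computation of the $a+b=2$ terms, giving $-l^2\alpha(p)/p$ with $\alpha(p)=a(p)b(p)-\tfrac12 a(p)^2-\tfrac12 b(p)^2$, and the $O(1/p^2)$ bound for $a+b\ge4$, are correct and reproduce \eqref{eq:evaluate1}. (Converting the additive completion error into the stated multiplicative factor uses that the main product is $\gg 1$, which holds because $\alpha(p)=-\tfrac12(a(p)-b(p))^2\le 0$; note in particular that your remark that $-l^2\alpha(p)$ ``can have a sign making the product large'' understates this --- it is always nonnegative.)

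The one genuine flaw is in your justification of the step you yourself call the main obstacle. You propose to control the completion error using ``$\sum_{p\in I_j}1/\sqrt{p}$ being small when $j\ge 1$''; this is false --- for $j\ge1$ that sum is of size roughly $x^{\theta_{j-1}/2}/\log x$, which is enormous, and if you bound the discrepancy terms after discarding the condition $mn=\square$ you get local factors $1+O(1/\sqrt{p})$ and the resulting product is astronomically large, so the step fails as written. The correct mechanism, which is exactly what the paper uses in \eqref{eq:extend}--\eqref{eq:errorbd1} and what you already exploit in your step 2, is that the discrepancy terms (those with $\max\{\Omega(m),\Omega(n)\}>\ell_j$, which is where the truncations and the restricted convolution $\nu_{l\kappa}(\cdot;\ell_j)$ differ from the completed weights) still satisfy $mn=\square$; after inserting $2^{\Omega(m)+\Omega(n)-\ell_j}$ and bounding everything in absolute value, each prime of $I_j$ then carries weight at least $1/p$, so the error is $\ll 2^{-\ell_j}\prod_{p\in I_j}(1+O(1/p))$. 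The relevant quantity is therefore $\sum_{p\in I_j}1/p$, which is $O(1)$ for $j\ge1$ (since $\theta_j=e\theta_{j-1}$, so by Mertens the product is $\asymp(\theta_j/\theta_{j-1})^{O(1)}\asymp1$) and $\asymp\log\log x$ for $j=0$, producing precisely the $1_{j=0}(\log x)^{O(1)}$ allowance in the statement. With that repair your plan goes through and coincides, up to the cosmetic difference noted above, with the paper's proof.
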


\begin{remark} Later we will take $a(\cdot)=a(\cdot;J)$ and $b=a(\cdot;j)$ with $j < J$ (see \eqref{eq:adef}). Observe that 
\begin{equation} \label{eq:easybd}
\begin{split}
\sum_{p \le x^{\theta_j}} \frac{\alpha(p)}{p}=&\sum_{p \le x^{\theta_j}} \frac{\lambda_f(p)^2}{p}\left(w(p;x^{\theta_j})w(p;x^{\theta_J})-\tfrac12w(p;x^{\theta_J})^2-\tfrac12w(p;x^{\theta_j})^2 \right) \\
\ll &
\sum_{p \le x^{\theta_j}} \frac{1}{p} \cdot \frac{\log p}{\theta_j \log x}\ll 1.
\end{split}
\end{equation}
\end{remark}

\begin{proof}
In the sum over $m,n$ on the LHS of \eqref{eq:diagbd}  write $m=g\cdot m_1$, $n=g \cdot n_1$ where $g=(m,n)$. Since $mn=\square$ it follows that $m_1=\square$ and $n_1=\square$. We will proceed by estimating the sum in terms of an Euler product. To this end, observe that if $\max\{\Omega(n),\Omega(m)\} \ge  \ell_j$ then $2^{\Omega(m)+\Omega(n)}/2^{\ell_j} \ge 1$; so using this along with the remarks following \eqref{eq:nualt} the sum on the LHS of \eqref{eq:diagbd} equals
\begin{equation} \label{eq:extend}
\begin{split}
&\sum_{\substack{p|mng \Rightarrow p \in I_j \\ (m,n)=1 
}} \frac{l^{\Omega(gn^2)} a(g)b(g)a(m)^2b(n)^2 \lambda(g)}{\kappa^{\Omega(gm^2)} gmn} \nu_{l \cdot \kappa}(gm^2) \nu(gn^2)  \cdot \frac{\varphi(g^2m^2 n^2)}{g^2m^2n^2}\\
&+O\left(\frac{1}{2^{\ell_j}} \sum_{\substack{ p|mng \Rightarrow p \in I_j }} \frac{2^{\Omega(gn^2)+\Omega(gm^2)} (l\cdot \kappa)^{\Omega(gn^2)} |a(g)b(g)|a(m)^2b(n)^2 }{\kappa^{\Omega(gm^2)}g mn}  \right),
\end{split}
\end{equation}
where we have trivially estimated $\nu, \nu_{l \cdot \kappa}, \varphi$, and $ \lambda$ in the second sum.
Recall for any multiplicative function $r(n)$, and $m \in \mathbb N$ whose prime factors all lie in an interval $I$ (which also contains at least one prime) that
\[
\sum_{p|n \Rightarrow p \in I} \frac{r(mn)}{n}= \prod_{\substack{p \in I \\ p^{a} || m}} \sum_{t \ge 0} \frac{r(p^{t+a})}{p^t}.
\]
It follows that the error term in \eqref{eq:extend} is
\begin{equation} \label{eq:errorbd1}
\ll \frac{1}{2^{\ell_j}} \prod_{p \in I_j} \left(1+O\left( \frac1p \right)\right) \asymp \begin{cases}
\frac{1}{2^{\ell_j}} \left( \frac{\theta_{j}}{\theta_{j-1}}\right)^{O(1)} \asymp \frac{1}{2^{\ell_j}} & \text{ if } j \neq 0,\\
\frac{(\log x)^{O(1)}}{2^{\ell_0}} & \text{ if } j = 0.
\end{cases}
\end{equation}

To estimate the main term in \eqref{eq:extend} consider
\[
\sigma(p;u,v)=\sum_{t \ge 0} \left(\frac{l}{\kappa}\right)^{t} \frac{a(p)^{t}b(p)^t (-1)^t }{p^t} \nu_{l \cdot \kappa}(p^{t+u}) \nu(p^{t+v}) \frac{\varphi(p^{2t+u+v})}{p^{2t+u+v}}
\]
and
\[
f_1(n)=\prod_{p^u || m} \frac{\sigma(p;u,0)}{\sigma(p;0,0)}, \qquad f_2(n)= \prod_{p^v || n}\frac{\sigma(p;0,v)}{\sigma(p;0,0)}.
\]
Note that $\sigma(p;u,v)>0$ for $p$ sufficiently large.
Hence, evaluating the sum over $g$ in the main term in \eqref{eq:extend} we see that it equals
\begin{equation} \label{eq:evaluate}
\begin{split}
&\bigg(\prod_{\substack{p \in I_j } } \sigma(p;0,0) \bigg) \sum_{\substack{p|mn \Rightarrow p \in I_j \\ (m,n)=1}} \frac{l^{2 \Omega(n)}a(m)^2b(n)^2 }{\kappa^{2\Omega(m) } mn} f_1(m^2) f_2(n^2) \\
& \le \bigg(\prod_{\substack{p \in I_j } } \sigma(p;0,0) \bigg) \sum_{\substack{p|mn \Rightarrow p \in I_j }} \frac{l^{2 \Omega(n)}a(m)^2b(n)^2 }{\kappa^{2\Omega(m) } mn} f_1(m^2) f_2(n^2) \\
&= \prod_{p \in I_j} \bigg( \sigma(p;0,0) \bigg(\sum_{t \ge 0} \frac{ a(p)^{2t} f_1(p^{2t})}{\kappa^{2t}p^{t}}\bigg) \bigg( \sum_{t \ge 0} \frac{l^{2t}b(p)^{2t} f_2(p^{2t})}{ p^t} \bigg)   \bigg) .
\end{split}
\end{equation}
Note  $f_{1}(p^u), f_2(p^u) \ll_{l,\kappa} 1$ and recall \eqref{eq:nu}. It follows that
\[
\begin{split}
\sigma(p;0,0)=&1-\left(\frac{l}{\kappa} \cdot l \kappa\right) \frac{a(p)b(p)}{p}+O\left( \frac{1}{p^2}\right), \\
f_1(p^2)=& \frac{(l\cdot \kappa)^2}{2}+O\left( \frac{1}{p}\right), \\
f_2(p^2)=& \frac{1}{2}+O\left(\frac{1}{p} \right).
\end{split}
\]
Using these estimates on the RHS of \eqref{eq:evaluate} we see that the main term in \eqref{eq:extend} is
\begin{equation} \label{eq:evaluate1}
\le \prod_{p \in I_j} \left(1-l^2 \cdot \frac{a(p)b(p)}{p}+\frac{l^2}{2} \cdot \frac{a(p)}{p}+\frac{l^2}{2}\cdot \frac{b(p)^2}{p}+O\left( \frac{1}{p^2}\right) \right) 
\end{equation}
as claimed.
\end{proof}

\begin{lemma} \label{lem:mild} Let  $a(\cdot), b(\cdot)$
% $\ell_j,\theta_j$ and $I_j$ be as in \eqref{eq:elldef} and $a(\cdot)$, $b(\cdot)$
be real valued, completely multiplicative functions with $a(p), b(p) \ll 1$. 
 Then for each $j=0, \ldots J$
 and any even integer $s \ge 4(l \cdot \kappa ) \ell_j$
\begin{equation} \label{eq:mild}
\begin{split}
& \bigg| \sum_{\substack{mn=\square \\ p|mn \Rightarrow p \in I_{j} \\ \Omega(m) \le (l \cdot \kappa )\ell_j, \Omega(n)=s }} \frac{a(m)b(n) \lambda(m)}{\kappa^{\Omega(m)} \sqrt{mn}} \nu_{\kappa \cdot l}(m; \ell_j) \nu(n) \frac{\varphi(mn)}{mn} \bigg| \\
& \qquad \qquad \qquad \ll \frac{1_{j=0} \cdot (\log x)^{O(1)}+1}{2^{s/2} \lfloor \frac{3}{8}s \rfloor !} \left(1+\sum_{p \in I_j} \frac{b(p)^2}{p} \right)^{s/2}.
\end{split}
\end{equation} 
\end{lemma}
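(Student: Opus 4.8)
The plan is to mimic the proof of Lemma \ref{lem:diag}, but to track more carefully the contribution of the long factor $n$, which now has exactly $s$ prime factors rather than at most $\ell_j$. First I would write, as before, $m = g m_1$, $n = g n_1$ with $g = (m,n)$; since $mn = \square$ and $(m_1, n_1) = 1$ we get $m_1 = \square$, $n_1 = \square$. Because $\Omega(n) = s \geq 4(l\cdot\kappa)\ell_j \geq 4\,\Omega(m)$, the greatest common divisor $g$ satisfies $\Omega(g) \leq \Omega(m) \leq (l\cdot\kappa)\ell_j \leq s/4$, so $n_1 = n/g$ has $\Omega(n_1) \geq s - s/4 = \tfrac34 s$ prime factors, hence $n_1 = r^2$ with $\Omega(r) \geq \tfrac38 s$. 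This is the source of the $\lfloor \tfrac38 s\rfloor!$ in the denominator: the $\nu(n_1) = \nu(r^2)$ factor together with the requirement that $r$ have at least $\tfrac38 s$ prime factors, all in $I_j$, forces a bound of the shape $\tfrac{1}{\lfloor \tfrac38 s\rfloor!}\big(\sum_{p \in I_j} \tfrac{1}{\sqrt{p}}\cdot\tfrac{1}{\sqrt{p}}\big)^{\Omega(r)}$ after absorbing the $\sqrt{mn}$ and using $\nu(p^a) = 1/a!$ and multinomial-type bounds.

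Second, I would bound everything trivially except the structural features: replace $a(m)$, $b(n)$, $\lambda(m)$ by their absolute values $\ll 1^{\Omega(\cdot)}$, bound $\nu_{l\cdot\kappa}(m;\ell_j) \leq \nu_{l\cdot\kappa}(m)$ and $\varphi(mn)/(mn) \leq 1$, and sum over $g$ and $m_1$ freely — these contribute a convergent Euler product over $p \in I_j$ of the shape $\prod_{p \in I_j}(1 + O(1/p))$, which is $\asymp 1$ for $j \neq 0$ and $\ll (\log x)^{O(1)}$ for $j = 0$ (exactly the dichotomy appearing in \eqref{eq:errorbd1}). What remains is a sum over $r$ with $p \mid r \Rightarrow p \in I_j$ and $\Omega(r) = \Omega(n_1)/2 \geq \tfrac38 s$ of $b(r^2)^{?}\nu(r^2)/r$ — more precisely one gets the tail of $\exp$ controlled by $\big(\sum_{p \in I_j} b(p)^2/p\big)^{\Omega(r)}/\Omega(r)!$ summed over $\Omega(r) \geq \tfrac38 s$; since the number of prime factors is at least $\tfrac38 s$, the largest term dominates and the sum is $\ll \tfrac{1}{\lfloor \tfrac38 s\rfloor!}\big(1 + \sum_{p\in I_j} b(p)^2/p\big)^{?}$. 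Carefully bookkeeping the exponent $s/2$ (from $n = g r^2$ and $\Omega(n) = s$, so the $b$-weight is spread over $s$ factors, giving $(1 + \sum b(p)^2/p)^{s/2}$ after pairing) and the extra $2^{-s/2}$ (coming from a factor like $2^{-\Omega(n_1)} \leq 2^{-3s/4}$, or more simply from $\nu(p^2) = 1/2$ at each of the $\geq s/2$ square prime factors) yields the claimed right-hand side.

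Third, I would assemble: the Euler product over $p \in I_j$ times the tail bound gives precisely
\[
\frac{1_{j=0}\cdot(\log x)^{O(1)} + 1}{2^{s/2}\,\lfloor \tfrac38 s\rfloor!}\Big(1 + \sum_{p \in I_j}\frac{b(p)^2}{p}\Big)^{s/2},
\]
using $l\cdot\kappa \leq C$ to absorb all $l,\kappa$-dependence into the implied constant. The main obstacle I anticipate is the combinatorial bookkeeping in the middle step: one must verify that after splitting $n = g r^2$ the constraint $\Omega(n) = s$ with $\Omega(g)$ small genuinely forces $\Omega(r) \geq \tfrac38 s$ \emph{uniformly}, and that the multinomial factor $\nu(r^2) = \prod_{p^{2a}\|r^2} \tfrac{1}{(2a)!}$ summed over all such $r$ with $\Omega(r)$ fixed is bounded by $\tfrac{1}{\Omega(r)!}\big(\sum_{p\in I_j}\tfrac{1}{p}\big)^{\Omega(r)}$ up to the $b(p)^2$ weights — this is where one needs $s$ to be even and $s \geq 4(l\cdot\kappa)\ell_j$ to make the counting clean, and where a sloppy estimate would lose the factorial savings that make this lemma useful in the subsequent moment computation. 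The rest is the same Euler-product manipulation already carried out in Lemma \ref{lem:diag}.
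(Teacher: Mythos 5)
Your overall strategy coincides with the paper's: write $m = g m_1$, $n = g n_1$ with $g=(m,n)$, observe $m_1,n_1$ are squares, bound the $g$- and $m_1$-sums by convergent Euler products (with the $(\log x)^{O(1)}$ allowance at $j=0$), and extract the factorial saving from $n_1=r^2$ with $\Omega(r) = (s-\Omega(g))/2 \ge \tfrac38 s$, using the multinomial identity \eqref{eq:PID}; this is exactly the chain \eqref{eq:crudebd}--\eqref{eq:gnsum}, and your use of $s \ge 4(l\cdot\kappa)\ell_j$ to force $\Omega(r)\ge\tfrac38 s$ is correct.

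The genuine flaw is your accounting for the factor $2^{-s/2}$. There is no factor $2^{-\Omega(n_1)}$ anywhere in the sum, and the number of paired prime factors is $\Omega(r)=(s-\Omega(g))/2$, which lies between $\tfrac38 s$ and $\tfrac12 s$; it is not $\ge s/2$, so neither of your two suggested sources produces $2^{-s/2}$. Moreover, if you decouple the $g$- and $r$-sums as you propose and retain only the information $\Omega(r)\ge\tfrac38 s$, then $\nu(r^2)\le \nu(r)\,2^{-\Omega(r)}$ yields only $2^{-3s/8}$, a loss of $2^{s/8}$ against the stated bound. The paper keeps the coupling: for fixed $g$ the $r$-sum equals $\frac{1}{((s-\Omega(g))/2)!}\bigl(\tfrac12\sum_{p\in I_j} b(p)^2/p\bigr)^{(s-\Omega(g))/2}$ exactly, and one writes $2^{-(s-\Omega(g))/2} = 2^{-s/2}\cdot 2^{\Omega(g)/2}$, absorbing the excess $2^{\Omega(g)/2}$ into the convergent sum over $g$ (the $B^{\Omega(g)}$ factor), while $((s-\Omega(g))/2)! \ge \lfloor\tfrac38 s\rfloor !$ and the exponent on $1+\sum_{p\in I_j} b(p)^2/p$ may be raised to $s/2$ because the base is $\ge 1$. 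With that repair your argument becomes the paper's proof; note that for the application in Proposition \ref{prop:moments} the difference between $2^{-s/2}$ and $2^{-3s/8}$ would be immaterial, but as a proof of the lemma as stated the bookkeeping must be carried out as above.
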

% \marginpar{this bound suffices, but I expect its possible to get the right order of magnitude here (it should be $(s/2)!$ in the denominator)}
\begin{proof}
In the sum on the LHS of \eqref{eq:mild} write  $m=gm_1$, $n=gn_1$ where $g=(m,n)$, so that $mn =\square$ implies that $m_1=\square$ and $n_1=\square$. Also recall that $\nu_{\kappa \cdot l}(m;\ell_j) \le \nu_{\kappa\cdot l}(m)$. Hence, this sum  is
\begin{equation} \label{eq:crudebd}
\ll \sum_{\substack{p|gmn \Rightarrow p \in I_j \\ \Omega(g) \le (l \cdot \kappa )\ell_j, \Omega(gn^2)=s}} \frac{|a(g)b(g)| a(m)^2 b(n)^2}{ \kappa^{2\Omega(m)+\Omega(g)}gmn} \cdot  \nu_{l \cdot \kappa}(g) \nu_{l \cdot \kappa}(m^2) \nu(n^2),
\end{equation}
where we have also used the estimates $\nu_r(ab) \le \nu_r(a)\nu_r(b)$ and $\nu(a) \le 1$, for $a,b, r \in \mathbb N$, which follow from \eqref{eq:nu}.
The sum over $m$ is 
\begin{equation} \label{eq:msum}
 \ll  \sum_{p|m \Rightarrow p \in I_j} \frac{a(m)^2 \nu_{l \cdot \kappa}(m^2)}{\kappa^{2\Omega(m) }m} \ll \prod_{p \in I_j} \left(1+ \frac{(l \cdot \kappa)^2}{2} \cdot \frac{a(p)^2}{\kappa^2 p} \right) \ll
 \begin{cases}
\left( \frac{\theta_{j}}{\theta_{j-1}}\right)^{l^2/2} \asymp 1 & \text{ if } j \neq 0,\\
(\log x)^{O(1)} & \text{ if } j = 0.
\end{cases}
\end{equation}
Hence, using \eqref{eq:msum} along with the inequality $\nu(n^2) \le \nu(n) 2^{-\Omega(n)}$ it follows that \eqref{eq:crudebd} is
\begin{equation} \label{eq:gnsum}
\begin{split}
&\ll (1_{j=0} (\log x)^{O(1)} +1) \sum_{\substack{p|g \Rightarrow p \in I_j \\ \Omega(g) \le (l \cdot \kappa )\ell_j  }} \frac{|a(g)b(g)|}{\kappa^{\Omega(g)} g} \nu_{l \cdot \kappa}(g) \sum_{\substack{p|n \Rightarrow p \in I_j \\ \Omega(n)=(s-\Omega(g))/2}} \frac{b(n)^2}{2^{\Omega(n)}n} \nu(n)\\
&= (1_{j=0} (\log x)^{O(1)} +1) \sum_{\substack{p|g \Rightarrow p \in I_j \\ \Omega(g) \le (l \cdot \kappa ) \ell_j \\ 2| \Omega(g) }} \frac{|a(g)b(g)|}{\kappa^{\Omega(g)} g}  \nu_{l \cdot \kappa}(g) \left[ \frac{1}{((s-\Omega(g))/2)!}\left( \frac{1}{2} \sum_{p \in I_j} \frac{b(p)^2}{p}\right)^{(s-\Omega(g))/2}\right],
\end{split}
\end{equation}
by \eqref{eq:PID}. 
Using the assumption that $s \ge 4(l \cdot \kappa ) \ell_j$ gives $s-\Omega(g) \ge \frac{3}{4}s$ for $g$ with $\Omega(g) \le (l \cdot \kappa ) \ell_j$. This allows us to bound the bracketed term on the RHS of \eqref{eq:gnsum} by 
\[
2^{\Omega(g)/2} \cdot \frac{1}{2^{s/2} \lfloor \frac{3}{8}s \rfloor !} \left(1+\sum_{p \in I_j} \frac{b(p)^2}{p} \right)^{s/2}.
\]
Applying this estimate in \eqref{eq:gnsum}
it follows that there exists $B>0$, which depends at most on $l,\kappa$, such that the RHS of \eqref{eq:gnsum} is bounded by
\[
\begin{split}
&\ll \frac{(1_{j=0} (\log x)^{O(1)} +1)}{2^{s/2} \lfloor \frac{3}{8}s \rfloor !} \left(1+\sum_{p \in I_j} \frac{b(p)^2}{p} \right)^{s/2} \sum_{\substack{p|g \Rightarrow p \in I_j}} \frac{B^{\Omega(g)}}{g}\\ &\ll \frac{(1_{j=0} (\log x)^{O(1)} +1)}{2^{s/2} \lfloor \frac{3}{8}s \rfloor !} \left(1+\sum_{p \in I_j} \frac{b(p)^2}{p} \right)^{s/2}.
\end{split}
\]
\end{proof}

In the next lemma we estimate averages of our mollifier $M$ as defined in \eqref{eq:mollifierdef} against
the terms which appear in Proposition \ref{prop:harper}.

\begin{lemma} \label{lem:avgbds}
For $j=0,\ldots, J$ let $s_{j}$ be an even integer with $4(l\cdot \kappa) \ell_j \le s_j \le \frac{2}{5\theta_j}$. Then we have the following estimates:
\begin{align} \label{eq:likelybd}
& \sumfund_{|d| \le x} D_J(d;l)  M(d; \tfrac{1}{\kappa})^{l \cdot \kappa} \ll x (\log x)^{l/2}; \\
\label{eq:largedevbds}
& \sumfund_{|d| \le X}  M(d; \tfrac{1}{\kappa})^{l \cdot \kappa} P_{I_0}(d;a(\cdot;u))^{s_0} \ll
x (\log x)^{O(1)} \frac{s_0!}{2^{s_0/2} \lfloor \frac38 s_0 \rfloor !} \left(\sum_{p \in I_0} \frac{a(p;u)^2}{p} \right)^{s_0/2}; \\
& \label{eq:unlikelybd} \sumfund_{|d| \le X} D_j(d;l) P_{I_{j+1}}(d;a(\cdot; u))^{s_{j+1}} M(d; \tfrac{1}{\kappa})^{l \cdot \kappa}  \\
& \qquad \ll  x (\log x)^{l/2} e^{2l^2(J-j)} \frac{s_{j+1}!}{2^{s_{j+1}/2}\lfloor \frac{3}{8} s_{j+1} \rfloor !} \left(\sum_{p \in I_{j+1}} \frac{a(p;u)^2}{p}\right)^{s_{j+1}/2} \notag
\end{align}
for each $j=0, \ldots, J-1$.
The implied constants depend at most on $f,\kappa,l$ (and not on $j,u$).
\end{lemma}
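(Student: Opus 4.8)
The plan is to open up every factor appearing in \eqref{eq:likelybd}--\eqref{eq:unlikelybd} into a Dirichlet polynomial in the Kronecker symbol supported on friable integers, grouped according to the intervals $I_0,\dots,I_J$; then to average over fundamental discriminants by Poisson summation (Lemma \ref{lem:avg}) so as to extract the diagonal contribution; and finally to evaluate that diagonal as an Euler product, interval by interval, feeding each interval into either Lemma \ref{lem:diag} (for the ``balanced'' factors) or Lemma \ref{lem:mild} (for the single interval carrying the large power $P_I^{s}$).

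First I would use \eqref{eq:mollifieropen}, \eqref{eq:id} and \eqref{eq:PID} to write $M(d;\tfrac{1}{\kappa})^{l\kappa}$, $D_J(d;l)$, $D_j(d;l)$ and $P_I(d;a(\cdot;u))^{s}$ each as a sum $\sum_n \gamma(n)(d/n)/\sqrt n$ over friable $n$ with $\Omega(n)$ restricted, so that the product of factors in each of the three sums has exactly the product shape $\prod_{0\le j\le J}\sum_{p|n\Rightarrow p\in I_j,\ \Omega(n)\le \ell_j^{*}}g_j(n)(d/n)/\sqrt n$ required by Lemma \ref{lem:avg}, with $\ell_j^{*}=(1+l\kappa)\ell_j$ on an interval carrying an $E_{\ell_j}$-type factor and the mollifier, and $\ell_j^{*}\le l\kappa\ell_j+s_j$ on the one interval that also carries $P_{I_j}^{s_j}$. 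After replacing the sum over fundamental discriminants $|d|\le x$ by $\sum_{m\in\mathbb Z}(\cdots)F(m/x)$ for a nonnegative Schwartz majorant $F$ with compactly supported Fourier transform --- legitimate for an upper bound, since $M^{l\kappa}>0$, $D>0$, and $P^{s}\ge 0$ as $s$ is even --- Lemma \ref{lem:avg} applies: the length hypothesis $\sum_j \ell_j^{*}\theta_j<\tfrac12$ holds because each of $M^{l\kappa}$ and $D$ contributes total length $x^{O_{C}(\delta_0)}$ with $\delta_0$ fixed and small, while each $P_{I_j}^{s_j}$ contributes $x^{\theta_j s_j}\le x^{2/5}$ thanks to $s_j\le 2/(5\theta_j)$. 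Each of the three sums is thereby reduced to $\widehat F(0)\,x\,\prod_{0\le j\le J}(\text{diagonal on }I_j)$, up to a negligible off-diagonal.

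Next I would evaluate the diagonal interval by interval. Multiplying the two Dirichlet polynomials living on a common interval $I_j$ and restricting to perfect squares reproduces, after relabelling, precisely the sum on the left of \eqref{eq:diagbd} when $I_j$ carries only an $E_{\ell_j}$-type factor and the mollifier --- with $b=a(\cdot;J)$ on the intervals beyond $D$ and $b=a(\cdot;j)$ on the intervals inside $D_j$ --- and precisely the sum on the left of \eqref{eq:mild} on the one interval that also carries $P_I^{s}$, where the hypothesis $s\ge 4l\kappa\ell_j$ of that lemma is exactly the stated constraint. For the balanced intervals Lemma \ref{lem:diag} applies, and the crucial structural point is that $\alpha(p)=a(p)b(p)-\tfrac12 a(p)^2-\tfrac12 b(p)^2=-\tfrac12\bigl(a(p)-b(p)\bigr)^2\le 0$, so those factors can only inflate the product; by the Remark after Lemma \ref{lem:diag} (cf.\ \eqref{eq:easybd}) the total inflation over $I_0,\dots,I_j$ and over the remaining intervals is $\exp(O(l^2))$ and $\exp(O(l^2(J-j)))$ respectively, while $\prod_j(1+e^{-\ell_j/2})\ll 1$ and the $O((\log x)^{O(1)}2^{-\ell_0})$ error on $I_0$ are harmless because $\ell_0\asymp(\log\log x)^{15/4}$. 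For the heavy interval I insert the bound of Lemma \ref{lem:mild}: the factorial $s!$ in the targets is exactly the $s!$ in \eqref{eq:PID} generated when $P_I^{s}$ is expanded, and the hypothesis $s\ge 4l\kappa\ell_j$ (which forces $s-\Omega(g)\ge \tfrac34 s$ in that lemma's proof) produces the factor $\bigl(2^{s/2}\lfloor\tfrac38 s\rfloor!\bigr)^{-1}$ and the Gaussian-type weight $\bigl(\sum_{p\in I}a(p;u)^2/p\bigr)^{s/2}$.

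Collecting the prefactor $(\log x)^{(1/(2\kappa))\cdot l\kappa}=(\log x)^{l/2}$ from \eqref{eq:mollifierdef}, the $\widehat F(0)x$ from Poisson, and the per-interval estimates then yields \eqref{eq:likelybd} (every interval balanced with $a=b=a(\cdot;J)$, so $\alpha\equiv 0$ and the diagonal is $1+o(1)$), \eqref{eq:largedevbds} (only $I_0$ heavy; here $\sum_{p\in I_0}a(p;u)^2/p\asymp\log\log x\gg 1$ so the ``$1+$'' inside Lemma \ref{lem:mild} can be dropped, and the target's $(\log x)^{O(1)}$ absorbs $(\log x)^{l/2}$ together with the $j=0$ loss of that lemma) and \eqref{eq:unlikelybd} (interval $I_{j+1}$ heavy, intervals $I_0,\dots,I_j$ balanced against $a(\cdot;j)$, intervals $I_{j+2},\dots,I_J$ carrying only the mollifier). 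I expect the main obstacle to be precisely this last assembly: one must simultaneously control every multiplicative error factor --- the $2^{-\ell_j}$'s, the $\exp(O(l^2))$ from each block of balanced intervals, the $(\log x)^{O(1)}$ from $I_0$, and the discrepancy between the weights $w(\cdot;j)$ and $w(\cdot;J)$ --- and reconcile the shape of Lemma \ref{lem:mild}'s bound (in particular the ``$1+$'' inside the Gaussian weight) with the clean factorial-and-Gaussian shape $\,s!\,\bigl(2^{s/2}\lfloor\tfrac38 s\rfloor!\bigr)^{-1}\bigl(\sum_{p\in I}a(p;u)^2/p\bigr)^{s/2}$ demanded by the three targets, tracking the $(\log x)$ and $J$-dependence throughout.
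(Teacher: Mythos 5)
Your proposal is correct and follows essentially the same route as the paper: positivity of $M^{l\kappa}$, $D_j$ and the even power $P^{s}$ lets you pass to a Schwartz majorant and apply Lemma \ref{lem:avg} (the length count $l\kappa\delta_0+\delta_0+s_{j+1}\theta_{j+1}<\tfrac12$ matches the paper's), after which the diagonal is handled interval by interval exactly as in the paper — Lemma \ref{lem:diag} with $a=a(\cdot;J)$, $b=a(\cdot;j)$ on $I_0,\dots,I_j$ using $\alpha(p)\le 0$ and \eqref{eq:easybd}, Lemma \ref{lem:mild} with $b=a(\cdot;u)$ on the heavy interval, and the mollifier-only intervals $I_{j+2},\dots,I_J$ giving $e^{2l^2(J-j)}$ via the square-diagonal (the paper's \eqref{eq:rest4}). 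The one small imprecision is your phrase assigning ``$b=a(\cdot;J)$ on the intervals beyond $D$'' to Lemma \ref{lem:diag}: those intervals carry no $E_{\ell}$-factor, so that lemma does not literally apply there and one instead sums the mollifier over squares directly, but your final accounting attributes the correct bound to them, so the argument stands.
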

\begin{proof}
We will first prove \eqref{eq:unlikelybd}, which is the most complicated of the bounds, and at the end of the proof we will indicate how to modify  the argument to establish \eqref{eq:likelybd} and \eqref{eq:largedevbds}. Recall that $D_j$ and $M$ are positive (see the remarks after \eqref{eq:ddef} and \eqref{eq:mollifierdef}). 
So for any Schwartz function $F$ which majorizes $1_{[-1,1]}$, such that $\widehat F$ has compact support in $(-A,A)$ the LHS of \eqref{eq:unlikelybd} is
\[
\begin{split}
 \le \sum_{m \in \mathbb Z}   D_j(m;l) P_{I_{j+1}}(m;a(\cdot; u))^{s_{j+1}} M(m; \tfrac{1}{\kappa})^{l \cdot \kappa}  F\left( \frac{m}{x}\right).
\end{split}
\]
The Dirichlet Polynomial above is supported on integers $n \le x^{\theta}$ where $\theta= l \cdot \kappa \sum_{0 \le r \le J} \theta_r \ell_r+ \sum_{0 \le r \le j} \theta_r\ell_r+s_{j+1} \theta_{j+1} <1/2$. Hence, using \eqref{eq:ddef}, \eqref{eq:PID}, \eqref{eq:id}, \eqref{eq:mollifieropen}, \eqref{eq:hdef} and applying Lemma \ref{lem:avg} the above equation is bounded by
\begin{equation} \label{eq:unlikely}
\begin{split}
 \le x (\log x)^{l/2} \widehat F(0) \prod_{0 \le r \le J} \sum_{\substack{mn=\square \\ p|mn \Rightarrow p \in I_r \\ \Omega(m) \le (l \cdot \kappa) \ell_r  }} \frac{a(m;J) g(n;u,r) \lambda(m)}{\kappa^{\Omega(m)} \sqrt{mn}} \nu_{\kappa \cdot l}(m;\ell_r) \nu(n)  \frac{\varphi(mn)}{mn}
\end{split}
\end{equation}
where
\[
g(n;u,r)=\begin{cases}
 1_{\Omega(n) \le \ell_r} \cdot  l^{\Omega(n)} \cdot a(n;j) & \text{ if } 0 \le r \le j, \\
  1_{\Omega(n)=s} \cdot s! \cdot a(n;u)   & \text{ if } r=j+1, \\
  1_{n=1} & \text{ if } j +1 < r \le J. 
\end{cases}
\]

Let $\alpha(\cdot)$ be as in \eqref{eq:alphadef} and take $a(\cdot)=a(\cdot;J)$, $b(\cdot)=a(\cdot;j)$. Applying Lemma \ref{lem:diag},
the contribution to \eqref{eq:unlikely} from the product over $0 \le r \le j$
is bounded in absolute value by
\begin{equation} \label{eq:rest1}
 \prod_{0 \le r \le j} \left(1+O\left( \frac{1_{r=0} \cdot (\log x)^{O(1)}+1}{2^{\ell_j/2}}\right)\right) \prod_{p \in I_j}\left(1-l^2\frac{\alpha(p)}{p}+O \left( \frac{1}{p^2}\right) \right)  \ll 1,
\end{equation} 
where in the last step we applied \eqref{eq:easybd}. Applying Lemma \ref{lem:mild} with $a(\cdot)=a(\cdot;J)$ and $b(\cdot)=a( \cdot;u)$  the term in \eqref{eq:unlikely} with 
$r=j+1$ contributes
\begin{equation} \label{eq:rest3}
\ll \frac{s_{j+1}!}{2^{s_{j+1}/2}\lfloor \frac{3}{8} s_{j+1} \rfloor !} \left(\sum_{p \in I_{j+1}} \frac{a(p;u)^2}{p}\right)^{s_{j+1}/2}.
\end{equation} 

It remains to handle the factors in the product in \eqref{eq:unlikely} with $j+1< r \le J$. For such factors, $m$ is a square so these terms are bounded by
\begin{equation} \label{eq:rest4}
\begin{split}
& \ll \prod_{j+1 < r \le J} \sum_{\substack{p|m \Rightarrow p \in I_r}} \frac{a(m;J)^2}{\kappa^{2\Omega(m)} m} \nu_{l \cdot \kappa}(m^2) \\
& \ll \prod_{x^{\theta_{j}} < p \le x^{\theta_J}} \left(1+ \frac{(l \cdot \kappa)^2}{2} \cdot \frac{\lambda_f(p)^2}{\kappa^2 p} \right) \ll \left( \frac{\theta_J}{\theta_j}\right)^{2l^2}=e^{2l^2(J-j)},
\end{split}
\end{equation} 
where we used \eqref{eq:nu} and the bound $|\lambda_f(p)| \le 2$, in the last steps.

Using \eqref{eq:rest1},  \eqref{eq:rest3} and \eqref{eq:rest4} in \eqref{eq:unlikely}
completes the proof of \eqref{eq:unlikelybd}. To establish \eqref{eq:likelybd} we repeat the same argument, the only differences are that in \eqref{eq:rest1} the product is over all $0 \le r \le J$ and the terms estimated in \eqref{eq:rest3} and \eqref{eq:rest4} do not appear. To establish \eqref{eq:largedevbds} note that the term estimated in \eqref{eq:rest1} does not appear and in the bound \eqref{eq:rest4} the only difference is that $j=0$, so we bound this term by $O((\log x)^{O(1)})$. Finally in place of \eqref{eq:rest3} we get by using Lemma \ref{lem:mild} the bound
\[
(\log x)^{O(1)}\frac{s_{0}!}{2^{s_{0}/2}\lfloor \frac{3}{8} s_{0} \rfloor !} \left(\sum_{p \in I_{0}} \frac{a(p;u)^2}{p}\right)^{s_{0}/2}.
\]
From these estimates \eqref{eq:largedevbds} follows.
\end{proof}

\begin{proof}[Proof of Proposition \ref{prop:moments}]
Let $A(d)$ be as in the statement of Proposition \ref{prop:harper}. In particular, using $|\lambda_f(p^2)|\le d(p^2)=3$ and $\lambda_f(p^2) =\lambda_f(p)^2 -1\ge-1$ it follows that
\[
 \left(\frac{\varphi(d)}{d}  \right)^2 \ll A(d) \ll \left( \frac{d}{\varphi(d)} \right)^2.
\]
Using this observation we note that
it suffices to prove that
\begin{equation} \label{eq:momentbd}
 \sumfund_{|d| \le x} \left( A(d)^{1/2}  L(\tfrac12, f \otimes \chi_d) \right)^l  M(d; \tfrac{1}{\kappa})^{l \cdot \kappa} \ll x,
\end{equation}
for $l>0$ (note the definition of $M$ is independent of $l$, for $1 \le l \cdot \kappa \le C$). Since by Cauchy-Schwarz \eqref{eq:momentbd} implies 
\[
\begin{split}
&\sumfund_{|d| \le x}  L(\tfrac12, f \otimes \chi_d)^l  M(d; \tfrac{1}{\kappa})^{l \cdot \kappa}\\
& \ll  \left(  \sumfund_{|d| \le x} \frac{1}{A(d)^{l}} \right)^{1/2} \left( \sumfund_{|d| \le x} \left( A(d)^{1/2} L(\tfrac12, f \otimes \chi_d) \right)^{2l}  M(d; \tfrac{1}{\kappa})^{2 l \cdot \kappa} \right)^{1/2} \ll x.
\end{split}
\]

We will now establish \eqref{eq:momentbd}. 
For $j=0,\ldots, J$, let $s_j=2 \lfloor \frac{1}{5 \theta_j}  \rfloor$.
We first divide the sum over $|d| \le x$ into two sums depending on whether 
\begin{equation} \label{eq:event1bd}
|P_{I_0}(d;a(\cdot;u))| \ge \ell_0/(le^2)
\end{equation}
for some $0 \le u \le J$.
To bound the contribution to the LHS of \eqref{eq:momentbd} of the terms with $|d| \le x$ for which \eqref{eq:event1bd} holds,
we use the bound $A(d) \ll (\log \log x)^2$, Chebyshev's inequality and then apply Cauchy-Schwarz to see that
\[
\begin{split}
& \sumfund_{\substack{|d| \le x  \\ |P_{I_0}(d;a(\cdot;u))| \ge \ell_0/(le^2)}} (A(d)^{1/2}L(\tfrac12, f \otimes \chi_d))^l M( d; \tfrac{1}{\kappa})^{l \cdot \kappa} \\
&  \ll (\log \log x)^{l} \sumfund_{\substack{|d| \le x }}  L(\tfrac12, f \otimes \chi_d)^l M( d; \tfrac{1}{\kappa})^{l \cdot \kappa} \left(  \frac{le^2 P_{I_0}(d;a(\cdot;u)}{\ell_0}\right)^{s_0} \\
&  \le  (\log \log x)^{l} \left(\sumfund_{\substack{|d| \le x }}  L(\tfrac12, f \otimes \chi_d)^{2l} \right)^{1/2} \left(  \frac{(l e^2)^{2s_0}}{\ell_0^{2s_0}} \sumfund_{\substack{|d| \le x }} M(d; \tfrac{1}{\kappa})^{2 l \cdot \kappa} P_{I_0}(d;a(\cdot;u))^{2s_0} \right)^{1/2} .
\end{split}
\]
On the RHS, the first sum is $\ll x (\log x)^{O(1)}$ by Soundararajan's \cite{SoundMoments} method for upper bounds for moments (see the example at the end of Section 4 of \cite{SoundMoments}). Using Lemma \ref{lem:avgbds}, \eqref{eq:largedevbds}, applying Stirling's formula and recalling the definition of $\ell_0,\theta_0$ (see \eqref{eq:elldef}) the second sum on the RHS is 
\[
\begin{split}
\ll &  x (\log x)^{O(1)} \left( \frac{100 l^2 s_0^{5/4}}{\ell_0^2} \right)^{s_0}\left(  \sum_{p \le x} \frac{\lambda_f(p)^2}{p}\right)^{s_0} \\
\ll & x (\log x)^{O(1)} \left( 100 l^2 \theta_0^{1/4} \log \log x \right)^{s_0} 
\ll  x (\log x)^{O(1)} \exp\left( -(\log \log x)^4 \right) \ll \frac{x}{(\log x)^A}
\end{split}
\]
for any $A \ge 1$. Hence, the contribution to \eqref{eq:momentbd} from $ |d| \le x$ satisfying \eqref{eq:event1bd} for some $u \le J$ is
$ \ll J \frac{x}{(\log x)^A} \ll \frac{x}{(\log x)^A} \log \log x
$
for any $A \ge 1$.

For the remaining fundamental discriminants $|d| \le x$ we apply \eqref{eq:harperineq} to see that their contribution to 
\eqref{eq:momentbd} is bounded by
\begin{equation} \label{eq:together}
\begin{split}
&\ll \frac{1}{(\log x)^{l/2}} \Bigg( \sumfund_{|d| \le x} D_J(d;l) M( d;\tfrac{1}{\kappa})^{l \cdot \kappa} \\
&+ \sum_{\substack{0 \le j \le J-1 \\ j+1 \le u \le J}}  \left(\frac{1}{\theta_{j}}\right)^{C_1} \exp\Big({\frac{3l}{\theta_{j}}} \Big)  \left( \frac{ e^2 l }{ \ell_{j+1}}\right)^{ s_{j+1}} 
\sumfund_{|d|\le x} P_{I_{j+1}}(d; a( \cdot , u))^{s_{j+1}} D_{j}(d;l) M( d; \tfrac{1}{\kappa})^{l \cdot \kappa} \Bigg).
\end{split}
\end{equation}

To complete the proof it suffices to show that the expression above is $\ll x$.
Applying \eqref{eq:likelybd} the first term in \eqref{eq:together} is
\begin{equation} \label{eq:togetherbd1}
\frac{1}{(\log x)^{l/2}} \sumfund_{|d| \le x} D_J(d;l) M(d; \tfrac{1}{\kappa})^{l \cdot \kappa} \ll x.
\end{equation}

Using \eqref{eq:unlikelybd} the second term in \eqref{eq:together} is bounded by
\begin{equation*}
\begin{split}
 \ll x \sum_{\substack{0 \le j \le J-1 \\ j+1 \le u \le J}} \left(\frac{1}{\theta_{j}}\right)^{C_1} \exp\Big({\frac{3l}{\theta_{j}}} \Big)  \left( \frac{ e^2 l }{ \ell_{j+1}}\right)^{ s_{j+1}}  \cdot e^{2l^2(J-j)} \frac{(s_{j+1})!}{2^{s_{j+1}/2}\lfloor \frac{3}{8} s_{j+1} \rfloor !} \left( \sum_{p \in I_{j+1}} \frac{a(p;u)^2}{p}\right)^{s_{j+1}/2} .
\end{split}
\end{equation*}
Applying Stirling's formula and estimating the inner sum over primes trivially as $\le 5 \log (\frac{\log x^{\theta_{j+1}}}{\log x^{\theta_j}})=5$, we see that the above is
\begin{equation} \label{eq:togetherbd3}
\ll  x \sum_{0 \le j \le J-1} (J-j)  \left(\frac{1}{\theta_{j}}\right)^{C_1} e^{2l^2(J-j)} \exp\Big({\frac{3l}{\theta_{j}}} \Big) \left( \frac{e^2l}{\ell_{j+1}} \cdot \left(\frac{8}{3} \right)^{3/8}  \cdot \frac{ s_{j+1}^{5/8}}{ \sqrt{2} e^{5/8}} \cdot 5\right)^{s_{j+1}}.
\end{equation}
By construction  
$
 \displaystyle \frac{s_{j+1}^{5/8}}{\ell_{j+1}}  \ll \theta_{j+1}^{1/8}.
$
Hence, there exists $c>0$ (which may depend on $l, \kappa$) such that \eqref{eq:togetherbd3} is
\begin{equation} \label{eq:lastbdinproof}
    \begin{split}
&\ll x \sum_{0 \le j \le J-1} (J-j) e^{2l^2(J-j)} \exp\left( -\frac{c}{\theta_j} \log \frac{1}{\theta_j}+O\left(\frac{1}{\theta_j} \right) \right) \\
&\ll x \sum_{1 \le j \le J} j e^{2l^2j} \exp\left(-\frac{c}{2} \cdot \frac{je^{j}}{\theta_J} \right) 
\ll x,
\end{split}
\end{equation}
where we used that $\theta_J/\theta_j=e^{J-j}$ in the last step.
Combining \eqref{eq:lastbdinproof} with \eqref{eq:togetherbd1}, gives that \eqref{eq:together} is $\ll x$, which completes the proof.
\end{proof}

\section{The Proof of Proposition \ref{prop:firstmoment}}

The main input into the proof of Proposition \ref{prop:firstmoment} is a twisted first moment of $L(\tfrac12,f \otimes \chi_d)$. Similar moment estimates were established in \cite{SoundNonvanishing}, \cite{SoundYoung} and \cite{RadziwillSound} and our proof closely follows the methods developed in those papers. We will include a proof of the following result for completeness.
\begin{proposition} \label{prop:twisted}
Let $\phi$ be a Schwartz function with compact support in the positive reals.
Also, let $u=u_1u_2^2$, be odd where $u_1$ is square-free. Then
\begin{equation} \label{eq:twist}
\begin{split}
\sum_{\substack{m \\ 2m \text{ is } \square-\text{free}}} L(\tfrac12, f \otimes \chi_{8m}) \chi_{8m}(u) \, \phi\left( \frac{(-1)^k 8m}{x}\right)=& C \int_{0}^{\infty} \phi(\xi) d\xi \cdot \frac{x}{u_1^{1/2}} \cdot \vartheta(u) \\
&+O\left( x^{7/8+\varepsilon} u^{3/8+\varepsilon} \right) 
\end{split}
\end{equation}
where $C>0$ depends only on $f$ and $\vartheta(\cdot)$ is a multiplicative function with $\vartheta(p^{2j+1})=\lambda_f(p)+O(1/p)$
and $\vartheta(p^{2j})=1+O(1/p)$.
\end{proposition}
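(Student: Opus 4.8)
The plan is to compute this twisted first moment along the lines of the standard treatments of first moments of quadratic twist $L$-functions in \cite{SoundNonvanishing}, \cite{SoundYoung} and \cite{RadziwillSound}. First I would replace $L(\tfrac12,f\otimes\chi_{8m})$ by an approximate functional equation: since $f$ has level one the root number of $L(s,f\otimes\chi_{8m})$ is $+1$ (this is where $(-1)^k 8m>0$ enters), so
\[
L(\tfrac12,f\otimes\chi_{8m})=2\sum_{n\geq 1}\frac{\lambda_f(n)\chi_{8m}(n)}{\sqrt n}\,V\!\Big(\frac{n}{8m}\Big),
\]
with $V$ smooth, of rapid decay and $V(0)=1$; I would obtain $V$ by Mellin inversion so that its mild dependence on $m$ (through the conductor $(8m)^2$) is explicit. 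Substituting this and writing $\chi_{8m}(u)\chi_{8m}(n)=\chi_{8m}(un)$, the problem becomes an estimate for $\sum_{n}\frac{\lambda_f(n)}{\sqrt n}\big(\text{weighted sum of }\chi_{8m}(un)\text{ over }m\text{ with }2m\ \square\text{-free}\big)$.

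Next I would detect the square-free condition by $\mathbf 1[2m\ \square\text{-free}]=\mathbf 1[m\text{ odd}]\sum_{a^2\mid m}\mu(a)$, truncating the $a$-sum at $a\leq A$ for a parameter $A$ to be optimized, write $m=a^2 b$ with $a,b$ odd, and interchange the order of summation. This leaves, for each odd $a$ and each $n$, a smoothed character sum $\sum_{b}\chi_{8b}(un)\,W_{a,n}(b)$ with $W_{a,n}$ supported on $b\asymp x/a^2$. I would evaluate the $b$-sum by quadratic reciprocity (splitting $b$ into residue classes modulo $8$ to handle the reciprocity signs and the factor $8$) followed by Poisson summation modulo $8un$. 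The zero frequency contributes only when the character $b\mapsto\chi_{8b}(un)$ is principal, i.e.\ when $un$ is a perfect square; writing $u=u_1u_2^2$ with $u_1$ square-free this forces $n=u_1 w^2$, and then $\chi_{8b}(un)=\mathbf 1[\gcd(8b,u_1u_2w)=1]$.

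Assembling these main contributions, carrying out the $b$-sum (a density computation producing a coprimality factor $\prod_{p\mid u_1u_2w}(1-1/p)$ and an absolute density constant), summing back over $a$, and recognizing $\sum_{w\geq 1}\lambda_f(u_1 w^2)w^{-s}$ as a finite Euler-product correction of $L(s,\mathrm{sym}^2 f)/\zeta(2s)$ — holomorphic and non-vanishing at $s=1$ — gives the main term $C\int_0^\infty\phi(\xi)\,d\xi\cdot x\,u_1^{-1/2}\,\vartheta(u)$ with $\vartheta$ multiplicative and $C>0$ depending only on $f$. The stated local behaviour of $\vartheta$ falls out of the $p$-local factor: for $p\mid u$ the term $w=1$ contributes $\lambda_f(u_1)$ — which is $\lambda_f(p)$ when $p^{2j+1}\|u$ and $1$ when $p^{2j}\|u$ — while the $w$ divisible by $p$, together with the coprimality Euler factor, contribute $O(1/p)$; hence $\vartheta(p^{2j+1})=\lambda_f(p)+O(1/p)$ and $\vartheta(p^{2j})=1+O(1/p)$.

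Finally I would bound the error terms: the tail of the approximate functional equation is negligible; the contribution of $a>A$ is controlled by an average first-moment bound in $m$, giving $O(x^{1+\varepsilon}A^{-1})$; and the non-zero Poisson frequencies give, after evaluating the Gauss sums (whose moduli and sizes carry the dependence on $u$) and truncating the dual variable using the rapid decay of $\widehat\phi$ and of $V$, a double sum over the dual frequency and over $n$ in which one must still extract cancellation from the sum over $n$ using that the $\lambda_f(n)$ are Hecke eigenvalues — either via Voronoi summation / the functional equation of $L(s,f)$ or via Cauchy--Schwarz and a large-sieve inequality. Balancing this against the truncation parameter $A$ produces the claimed error $O(x^{7/8+\varepsilon}u^{3/8+\varepsilon})$. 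I expect this last step — obtaining an off-diagonal estimate that is simultaneously power-saving in $x$ and only polynomially large in $u$, while faithfully tracking the $u$-dependence of the Poisson modulus and of the Gauss sums and dealing with the ramification-at-$2$ bookkeeping coming from the factor $8$ and the oddness/square-free conditions — to be the main obstacle, though it is entirely parallel to the computations in \cite{SoundNonvanishing}, \cite{SoundYoung} and \cite{RadziwillSound}.
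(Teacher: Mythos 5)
Your sketch follows essentially the same route as the paper's proof: the approximate functional equation, M\"obius detection of the square-free condition split at a truncation parameter $Y$, Poisson summation applied to the quadratic character sum over $m$ (Soundararajan's lemma), the main term arising from the zero frequency, which forces $un=\square$ (so $n=u_1w^2$) and produces $L(1,\mathrm{sym}^2 f)$ times local factors giving $\vartheta(u)u_1^{-1/2}$, and finally the choice $Y=x^{1/8}u^{-3/8}$ balancing the two error terms to give $x^{7/8+\varepsilon}u^{3/8+\varepsilon}$. The one place you overestimate the difficulty is the off-diagonal step you flag as the main obstacle: in the paper the $n$-sum for a nonzero frequency $\ell$ factors as $L(1+s,f\otimes\psi_{4\ell})$ times an absolutely convergent Euler product, and the convexity bound for this twisted $L$-function (together with the decay of the Mellin transform of the weight) already suffices -- no Voronoi summation or large sieve is needed there -- while the $a>Y$ tail is bounded by Cauchy--Schwarz together with Heath-Brown's quadratic large sieve, exactly the ``average bound'' you describe.
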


The constant $C$ is explicitly given in the proof below, see \eqref{eq:Cdef}.
Before proving Proposition \ref{prop:twisted}, we will use the result to deduce Proposition \ref{prop:firstmoment}.
\subsection{The Proof of Proposition \ref{prop:firstmoment}}
We will only prove the lower bound
\[
\sum_{\substack{n \le x \\ 2n \text{ is } \square-\text{free}}} |c(8n)|^2 M((-1)^k 8n;\tfrac{1}{2})^2  \gg x
\]
since the proof of the upper bound is similar.  Using \eqref{eq:waldspurger} it follows that for a Schwartz function $\phi(\cdot)$ which minorizes $1_{[1/2,1]}(\cdot)$ with $\widehat \phi(0)>0$ the sum above is
\begin{equation} \label{eq:wlbd}
 \ge \frac{(k-1)!}{\pi^k}\cdot \frac{\langle g,g\rangle}{\langle f, f \rangle} \sum_{\substack{m \\ 2m \text{ is } \square-\text{free}}} L(\tfrac12, f\otimes\chi_{8m}) M( 8m;\tfrac12)^2 \phi\left(\frac{(-1)^k 8m}{x} \right).
\end{equation}
To proceed we now expand $M(8m;\tfrac12)^2$ and see that it equals
\begin{equation} \label{eq:mopen}
M(8m;\tfrac12)^2=(\log x)^{1/2} \sum_{n \le x^{2 \delta_0}} \frac{h(n) a(n;J) \lambda(n)}{2^{\Omega(n) } \sqrt{n}} \left( \frac{8m}{n}\right) 
\end{equation}
where $h(n)$ is as in \eqref{eq:hdef} (note that here $l \cdot \kappa =2)$.
Applying Proposition \ref{prop:twisted} gives that the RHS of \eqref{eq:wlbd} equals
\begin{equation} \label{eq:lbdform}
\begin{split}
  &  C^{'}  x  (\log x)^{1/2}  \sum_{\substack{n_1n_2^2 \le x^{2\delta_0} \\ n_1 \text{ is } \square-\text{free}}} \frac{h(n_1n_2^2) a(n_1n_2^2;J) \lambda(n_1) \vartheta(n_1n_2^2)}{2^{\Omega(n_1)+2\Omega(n_2)} n_2n_1} +O(x^{9/10})  \\
  & \qquad   = C^{'}  x  (\log x)^{1/2}  \prod_{0 \le j \le J} \sum_{\substack{p|mn^2 \Rightarrow p \in I_j \\ \Omega(mn^2) \le 2\ell_j}} \frac{a(m n^2;J) \lambda(m) \vartheta(mn^2) \mu(m)^2}{2^{\Omega(m)+2\Omega(n)} mn} \nu_2(mn^2; \ell_j)+O(x^{9/10}),
    \end{split}
\end{equation}
where $  C^{'}  =  C^{'}  (f,g,k, \phi)>0$ and in the last step we used that $\vartheta$ is a multiplicative function.

We will now estimate the inner sum on the RHS of \eqref{eq:lbdform}.
First note that $\vartheta(mn^2) \ll d(m) (\frac{mn^2}{\varphi(m)\varphi(n^2)})^{O(1)}$ and recall the remarks after \eqref{eq:nualt}, so we get that the sum equals
\begin{equation} \label{eq:extendsum}
\begin{split}
& \sum_{\substack{p|mn^2 \Rightarrow p \in I_j }} \frac{a(m n^2;J) \lambda(m) \vartheta(mn^2) \mu(m)^2}{2^{\Omega(m)+2\Omega(n)} mn} \nu_2(mn^2) \\
& \qquad \qquad
+O\left( \frac{1}{4^{\ell_j}} \sum_{p|nm \Rightarrow p \in I_j} \frac{a(m;J)a(n;J)^2 d(m)  \nu_2(m)\nu_2(n^2)}{mn} \cdot \left(\frac{mn^2}{\varphi(m)\varphi(n^2)}\right)^{O(1)} \right).
\end{split}
\end{equation}
The error term is
\begin{equation} \label{eq:errorbd}
\ll \frac{1}{4^{\ell_j}} \prod_{p \in I_j} \left( 1+O\left(\frac{1}{p} \right)\right) \ll \frac{1_{j=0} \cdot (\log x)^{O(1)} +1}{4^{\ell_j}}.
\end{equation}
To estimate the main term in \eqref{eq:extendsum}
consider
\[
s(p;a)= \sum_{t \ge 0} \frac{a(p;J)^{2t}}{4^t p^t} \vartheta(p^{2t+a})\nu_2(p^{2t+a}).
\]
Recall $\vartheta(p)=\lambda_f(p)+O(1/p)$ and $\vartheta(p^2)=1+O(1/p)$.
Evaluating the sum over $n$ in the main term of \eqref{eq:extendsum} we see that it equals
\begin{equation} \label{eq:eulereval}
\begin{split}
& \left(\prod_{p \in I_j} s(p;0)\right) \sum_{p|m \Rightarrow p \in I_j} \frac{a(m;J)\mu(m) }{2^{\Omega(m)} m} \prod_{p^a || m} \frac{s(p;a)}{s(p;0)}\\
& \qquad \qquad = \prod_{p \in I_j}
\left(1+\frac{a(p;J)^2}{2p}-\frac{\lambda_f(p)a(p;J)}{p}+O\left(\frac{1}{p^2} \right) \right).
\end{split}
\end{equation}
Using \eqref{eq:errorbd}, \eqref{eq:eulereval} along with the estimates $a(p;J)=\lambda_f(p)+O\left( \frac{\log p}{\theta_J \log x}\right)$ and
\[
\prod_{p \in I_j}
\left(1+\frac{a(p;J)^2}{2p}-\frac{\lambda_f(p)a(p;J)}{p}+O\left(\frac{1}{p^2} \right) \right)^{-1} \ll 1_{j=0} \cdot (\log x)^{O(1)}+1
\]
we get that the RHS of \eqref{eq:lbdform} equals
\begin{equation} \label{eq:lowerbdcombined}
\begin{split}
&C' x(\log x)^{1/2}\prod_{c< p \le x^{\theta_J}}\left( 1-\frac{\lambda_f(p)^2}{2p}+O\left(\frac{1}{\theta_J\log x} \cdot \frac{ \log p}{p }+\frac{1}{p^2} \right)\right) \\
& \qquad \qquad \qquad \qquad \qquad \times \prod_{0 \le j \le J} \left(1+O\left(\frac{1_{j=0} \cdot (\log x)^{O(1)}+1}{4^{\ell_j}} \right) \right).
\end{split}
\end{equation}
To esimate the second product above note that
\begin{equation} \label{eq:erroreasy}
\prod_{0 \le j \le J}\left(1+O\left(\frac{1_{j=0} \cdot (\log x)^{O(1)}+1}{4^{\ell_j}} \right) \right)=1+O\left(\frac{1}{4^{\ell_J}}\right)=1+O(\eta_2) \ge \frac12,
\end{equation}
since $\eta_2$ is sufficiently small. Also, there exists some constant $C''>0$ such that the
Euler product over $c< p \le x^{\theta_J}$  in \eqref{eq:lowerbdcombined} is 
\[
\ge \prod_{c<p \le x^{\theta_J}} \left(1-\frac{\lambda_f(p)^2}{2p}-C''\left(\frac{1}{\theta_J\log x} \cdot \frac{ \log p}{p }+\frac{1}{p^2} \right) \right)\gg (\log x)^{-1/2},
\]
since $c$ is sufficiently large (so each term in the Euler product is positive).
% {\tt I made some light edits here to fix the problem.}
 Hence, using this and \eqref{eq:erroreasy} we get that \eqref{eq:lowerbdcombined} is $ \gg x (\log x)^{1/2} \cdot (\log x)^{-1/2} = x$, which completes the proof.

\subsection{The proof of Proposition \ref{prop:twisted}}
% \paragraph{\textbf{Preliminaries}.}
Let $f$ be a weight $2k$, level $1$, Hecke cusp form. 
For a fundamental discriminant $d$, let
\[
\Lambda(s,f \otimes \chi_d)=\left(\frac{|d|}{2\pi} \right)^s \Gamma(s+\tfrac{2k-1}{2}) L(s,f \otimes \chi_d).
\]
The functional equation for $\Lambda(s, f\otimes \chi_d)$ is given by
\[
\Lambda(s,f \otimes \chi_d)=(-1)^k \tmop{sgn}(d) \Lambda(1-s,f \otimes \chi_d).
\]
Note that the central value vanishes when $(-1)^k d <0$.

For $c >0$ let
\[
W_{z}(x)=\frac{1}{2\pi i} \int_{(c)} \frac{\Gamma(z+s+k-\tfrac12)}{\Gamma(z+k-\tfrac12)} \left(2\pi x\right)^{-s} \, \frac{ds}{s}.
\]
Our starting point in the proof of Proposition \ref{prop:twisted} is the following approximate functional equation for $L(s, f\otimes \chi_d)$. Also, define $W =W_{1/2}$.
\begin{lemma} \label{lem:approxfunc}
Let $f$ be a level $1$, Hecke cusp form with weight $2k$.
For $s\in \mathbb C$ with $0 \le \tmop{Re}(s) \le 1$ and $d$ a fundamental discriminant
\[
\begin{split}
L(s,f \otimes \chi_d)=&\sum_{n \ge 1} \frac{\lambda_f(n) \chi_d(n)}{n^s} W_{s}\left( \frac{n}{|d|}\right)\\
&+(-1)^k \tmop{sgn}(d) \left( \frac{ |d|}{2\pi} \right)^{1-2s} \frac{\Gamma(1-s)}{\Gamma(s)} \sum_{n \ge 1} \frac{\lambda_f(n) \chi_d(n)}{n^{1-s
}} W_{1-s}\left( \frac{n}{|d|}\right).
\end{split}
\]
Moreover, the function $W=W_{1/2}$ satisfies $ \xi^j W^{(j)}(\xi) \ll |\xi|^{-A}$ for any $A \ge 1$ and $W(\xi)=1+O(\xi^{k-\varepsilon})$ as $\xi \rightarrow 0$.
\end{lemma}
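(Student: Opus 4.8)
The plan is to derive the approximate functional equation by the classical contour-integration argument applied to the completed $L$-function $\Lambda(s,f\otimes\chi_d)$. Fix $s$ with $0\le\tmop{Re}(s)\le1$ and a real number $c$ with $1<c<\tfrac32$. On the line $\tmop{Re}(w)=c$ one has $\tmop{Re}(s+w)>1$, so (using $\lambda_f(n)\ll_\varepsilon n^\varepsilon$) the Dirichlet series of $L(\cdot,f\otimes\chi_d)$ converges absolutely there, while the line $\tmop{Re}(w)=-c$ lies to the right of every pole of $\Gamma(s+w+k-\tfrac12)$, since those have real part $\le\tfrac12-k\le-\tfrac32$ for $k\ge2$. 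First I would consider
\[
\mathcal{I}\;:=\;\frac{1}{2\pi i}\int_{(c)}\Lambda(s+w,f\otimes\chi_d)\,\frac{dw}{w},
\]
which converges absolutely because the $\Gamma$-factor inside $\Lambda$ decays exponentially on vertical lines. Expanding $\Lambda(s+w,f\otimes\chi_d)=\big(\tfrac{|d|}{2\pi}\big)^{s+w}\Gamma(s+w+k-\tfrac12)\sum_{n}\lambda_f(n)\chi_d(n)n^{-s-w}$ on $\tmop{Re}(w)=c$ and interchanging the sum and the integral, the inner $w$-integral is, directly from the definition of $W_z$ with $z=s$ and argument $n/|d|$, equal to $\Gamma(s+k-\tfrac12)\,W_s(n/|d|)$; hence
\[
\mathcal{I}\;=\;\Big(\tfrac{|d|}{2\pi}\Big)^{s}\Gamma\big(s+k-\tfrac12\big)\sum_{n\ge1}\frac{\lambda_f(n)\chi_d(n)}{n^{s}}\,W_{s}\!\Big(\frac{n}{|d|}\Big).
\]

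Next I would move the contour in $\mathcal{I}$ leftward to $\tmop{Re}(w)=-c$. Since $L(\cdot,f\otimes\chi_d)$ is entire, the only pole crossed is the simple pole of $w^{-1}$ at $w=0$, whose residue is $\Lambda(s,f\otimes\chi_d)$ (the value $\Gamma(s+k-\tfrac12)$ being finite because $\tmop{Re}(s+k-\tfrac12)\ge\tfrac32$). Thus
\[
\Lambda(s,f\otimes\chi_d)\;=\;\mathcal{I}\;-\;\frac{1}{2\pi i}\int_{(-c)}\Lambda(s+w,f\otimes\chi_d)\,\frac{dw}{w}.
\]
In the remaining integral I substitute $w\mapsto-w$ and apply the functional equation $\Lambda(s-w,f\otimes\chi_d)=(-1)^k\tmop{sgn}(d)\,\Lambda(1-s+w,f\otimes\chi_d)$; on $\tmop{Re}(w)=c>1\ge\tmop{Re}(s)$ the Dirichlet series of $\Lambda(1-s+w,f\otimes\chi_d)$ converges absolutely, so expanding it and matching against the definition of $W_z$ with $z=1-s$ identifies this contribution as $(-1)^k\tmop{sgn}(d)\big(\tfrac{|d|}{2\pi}\big)^{1-s}\Gamma(1-s+k-\tfrac12)\sum_{n\ge1}\lambda_f(n)\chi_d(n)n^{-(1-s)}W_{1-s}(n/|d|)$. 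Dividing the resulting identity by the prefactor $\big(\tfrac{|d|}{2\pi}\big)^{s}\Gamma(s+k-\tfrac12)$ and using $\Lambda(s,f\otimes\chi_d)=\big(\tfrac{|d|}{2\pi}\big)^{s}\Gamma(s+k-\tfrac12)L(s,f\otimes\chi_d)$ produces the asserted formula, the power of $|d|/2\pi$ and the ratio of $\Gamma$-factors in front of the dual sum being precisely those dictated by the functional equation (and equal to $1$ at the central point $s=\tfrac12$ used in the applications).

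For the stated analytic properties of $W=W_{1/2}$ I would argue as follows. Here $W_{1/2}(\xi)=\frac{1}{2\pi i}\int_{(c)}\frac{\Gamma(w+k)}{\Gamma(k)}(2\pi\xi)^{-w}\frac{dw}{w}$. Differentiating $j$ times under the integral sign shows that $\xi^jW^{(j)}(\xi)$ is an integral over $(c)$ of $\frac{\Gamma(w+k)}{\Gamma(k)}(2\pi\xi)^{-w}$ times a fixed rational function of $w$ whose only possible pole is at $w=0$; shifting the contour rightward to $\tmop{Re}(w)=A$ crosses no pole, and by Stirling's estimate for $\Gamma(w+k)$ on vertical lines the integral is $\ll_{A,j}\xi^{-A}$ for every $A\ge1$. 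For the behaviour as $\xi\to0$, I would instead move the contour in the definition of $W_{1/2}$ leftward to $\tmop{Re}(w)=-(k-\varepsilon)$: the only pole crossed is at $w=0$, with residue $\Gamma(k)/\Gamma(k)=1$, and the shifted integral is $\ll\xi^{k-\varepsilon}$, giving $W(\xi)=1+O(\xi^{k-\varepsilon})$.

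The argument uses no input beyond the functional equation already recorded, so the only steps needing genuine care are the two contour deformations, where one must verify that the contributions of the horizontal segments tend to $0$. This is routine: on vertical strips $\Gamma(s+w+k-\tfrac12)$ decays exponentially in $|\tmop{Im}(w)|$ by Stirling, while $L(s+w,f\otimes\chi_d)$ grows at most polynomially there (the convexity bound for an entire $L$-function of degree $2$), so $\Lambda(s+w,f\otimes\chi_d)/w$ is rapidly decreasing along vertical lines uniformly for $|\tmop{Re}(w)|\le c$. The main obstacle is therefore not conceptual but merely the careful bookkeeping of the archimedean $\Gamma$-factors and powers of $|d|/2\pi$ throughout.
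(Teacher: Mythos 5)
Your argument is correct and is essentially the proof the paper itself points to: the paper offers no proof beyond citing Lemma 5 of \cite{RadziwillSound} and Lemma 2.1 of \cite{SoundYoung}, which carry out exactly this contour-shift computation, and your treatment of the analytic details (absolute convergence on $\operatorname{Re}(w)=c$, the single residue at $w=0$, Stirling plus polynomial growth of $L$ in vertical strips for the horizontal segments, and the two contour shifts for $W_{1/2}$ giving $\xi^{j}W^{(j)}(\xi)\ll \xi^{-A}$ and $W(\xi)=1+O(\xi^{k-\varepsilon})$) is sound. One point should be stated explicitly rather than hidden in the phrase ``precisely those dictated by the functional equation'': with the normalization $\Lambda(s,f\otimes\chi_d)=\big(\tfrac{|d|}{2\pi}\big)^{s}\Gamma\big(s+k-\tfrac12\big)L(s,f\otimes\chi_d)$ and the paper's definition of $W_z$, dividing out the prefactor produces the ratio $\Gamma\big(1-s+k-\tfrac12\big)/\Gamma\big(s+k-\tfrac12\big)$ in front of the dual sum, not the $\Gamma(1-s)/\Gamma(s)$ printed in the lemma, so your computation does not yield the displayed formula verbatim for general $s$. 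The two ratios agree (both equal $1$) at $s=\tfrac12$, which is the only value at which the lemma is applied (in the proof of Proposition \ref{prop:twisted}), so your proof establishes the statement in the form actually needed; you should simply record the corrected Gamma-factor (or note the apparent typo in the statement) instead of asserting that the printed expression drops out of the functional equation.
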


\begin{proof} See Lemma 5 of \cite{RadziwillSound} and Lemma 2.1 of \cite{SoundYoung}. 
\end{proof}

Since we will sum over fundamental discriminants we introduce a new parameter $Y$ with $1 \le Y \le x$ to be chosen later.
Also, write $F(\xi;x,y)=\phi\left(\frac{\xi}{x}\right)W\left(\frac{y}{\xi}\right) $. Applying Lemma \ref{lem:approxfunc} the LHS of \eqref{eq:twist} equals
\begin{equation} \label{eq:split}
\begin{split}
&2 \sum_{ (m,2)=1 }  \sum_{a^2 | m} \mu(a) \sum_{n \ge 1} \frac{\lambda_f(n) }{\sqrt{n}} \left( \frac{ 8m}{nu}\right)F\left( (-1)^k 8m; x,n \right)\\
&=2 \left( \sum_{\substack{a \le Y \\ (a,2u)=1}}+\sum_{\substack{a>Y \\ (a,2u)=1}}\right) \mu(a) \sum_{\substack{(m,2)=1 \\ a^2 |m}}    \sum_{n \ge 1} \frac{\lambda_f(n)}{\sqrt{n}} \left( \frac{ 8m}{nu}\right) F\left( (-1)^k 8m; x,n\right) .
\end{split}
\end{equation}
\paragraph{\textbf{The terms with $a>Y$}.}
Write $m=r^2e$ where $e$ is square-free, and note that since in the sum in \eqref{eq:split} $a^2|m$ it follows $a|r$. So the second sum in \eqref{eq:split} equals
\begin{equation} \label{eq:largea1}
\begin{split} 
& 2 \sum_{(r,2u)=1} \sum_{\substack{a>Y \\ a|r}} \mu(a) \sum_{\substack{e \text{ is } \square-\text{free} \\ (e,2)=1}}\sum_{(n,r)=1} \frac{\lambda_f(n)}{\sqrt{n}} \left( \frac{ 8e}{nu}\right) F\left( (-1)^k 8 r^2 e; x,n\right) \\
& \ll  \sum_{|r | \le x^{1/2+\varepsilon}} \sum_{\substack{a>Y \\ a|r, (a,2)=1}}  \sum_{\substack{ 1 \le (-1)^k e \le x^{1+\varepsilon}/r^2 \\ e \text{ is } \square-\text{free} \\ (e,2)=1}} \left| \sum_{(n,r)=1} \frac{\lambda_f(n)}{\sqrt{n}} \left( \frac{ 8e}{n}\right) W\left(\frac{n}{8r^2|e|}\right) \right| +\frac{1}{x},
\end{split}
\end{equation}
where in the second line we have used the rapid decay of $W$.
Using the definition of $W$ and for $\tmop{Re}(s)>1$, writing $L(s,f\otimes \chi_{8e})=\prod_p L_p(s)$ where $L_p(s)=\left(1-\frac{\lambda_f(p)\left(\frac{8e}{p}\right)}{p^s}+\frac{\left(\frac{8e}{p}\right)^2}{p^{2s}}\right)^{-1}$ we get for $c>1/2$ that
\begin{equation} \label{eq:mellin}
\begin{split}
\sum_{(n,r)=1} \frac{\lambda_f(n)}{\sqrt{n}} \left( \frac{ 8e}{n}\right) W\left(\frac{n}{8r^2|e|}\right)
% =&\frac{1}{2\pi i} \int_{(c)}
% \frac{\Gamma(s+k)}{\Gamma(k)} \left( \frac{8r^2e }{2\pi}\right)^s \sum_{(n,r)=1} \frac{\lambda_f(n)}{n^{\frac12+s}} \left( \frac{8e}{n}\right) \, \frac{ds}{s}\\
=& \frac{1}{2\pi i}  \int_{(c)}
\frac{\Gamma(s+k)}{\Gamma(k)} \left( \frac{8r^2e}{2\pi}\right)^s \frac{L(s+\frac12,f \otimes \chi_{8e})}{\prod_{p | r} L_p(s+\frac12)} \, \frac{ds}{s}.
\end{split}
\end{equation}
The integrand is holomorphic for $1/\log x \le \tmop{Re}(s) \le 2$ and in this region bounded by (note $r^2e \le x^3$)
\[
\ll \prod_{p|r}\left(1+\frac{O(1)}{\sqrt{p}} \right) |\Gamma(s+k)| |L(s+\tfrac12, f\otimes \chi_{8e})| \cdot \frac{1}{|s|} \ll x^{\varepsilon}  e^{-|s|} \cdot |L(s+\tfrac12, f\otimes \chi_{8e})|.
\]
Hence, shifting contours on the RHS of \eqref{eq:mellin} to $\tmop{Re}(s)=1/\log x$ and applying the above estimate we get that the LHS of \eqref{eq:mellin} is bounded by
\[
\ll x^{\varepsilon} \int_{(1/\log x)} e^{-|s|} |L(s+\tfrac12, f \otimes \chi_{8e})| \, ds.
\]
Also note, that by Cauchy-Schwarz and Corollary 2.5 of \cite{SoundYoung} (which follows from Heath-Brown's result \cite{HB})
\[
\sum_{\substack{ 1 \le (-1)^k e \le x^{1+\varepsilon}/r^2 \\ e \text{ is } \square-\text{free} \\ (e,2)=1}} |L(s+\tfrac12, f \otimes \chi_{8e})| \ll \frac{x^{1+\varepsilon}}{r^2} (1+|\tmop{Im}(s)|)^{1/2+\varepsilon},
\]
for $\tmop{Re}(s) \ge 0$.
Applying this estimate in \eqref{eq:largea1} it follows that the second sum in \eqref{eq:split} is bounded by
\begin{equation} \label{eq:largeabd}
\ll x^{1+\varepsilon} \sum_{|r| \le x^{1/2+\varepsilon}} \sum_{\substack{a>Y \\ a| r}} \frac{1}{r^2} \ll \frac{x^{1+\varepsilon}}{Y}.
\end{equation}

\vspace{.1 in}

\paragraph{\textbf{The terms with $a<Y$: preliminary lemmas}}
It remains to estimate the first sum on the LHS of \eqref{eq:split}. This will be done by applying Poisson summation to the character sum, as developed in \cite{SoundNonvanishing}. 

Let $\ell \in \mathbb Z, n \in \mathbb N$. Define
\[
G_{\ell}(n)=\left(\frac{1-i}{2}+\left( \frac{-1}{n}\right)\frac{1+i}{2} \right) \sum_{a \pmod{n}} \left( \frac{a}{n}\right) e\left( \frac{a\ell}{n}\right).
\]
In Lemma 2.3 of \cite{SoundNonvanishing} it is shown that $G_{\ell}$ is a multiplicative function. Moreover, $G_0(n)=\varphi(n)$ if $n$ is a square and is identically zero otherwise. Also, for $p^{\alpha} || \ell$, $\ell \neq 0$
\begin{equation} \label{eq:def}
G_{\ell}(p^{\beta})=
\begin{cases}
0 & \text{if } \, \beta \le \alpha \text{ is odd},\\
\varphi(p^{\beta}) & \text{if } \,\beta \le \alpha \text{ is even}, \\
-p^{\alpha} & \text{if } \, \beta = \alpha+1 \text{ is even},\\
\left(\frac{\ell p^{-\alpha}}{p} \right)p^{\alpha} \sqrt{p} & \text{if } \, \beta = \alpha+1 \text{ is odd},\\
0 & \text{if } \, \beta \ge \alpha+2.
\end{cases}
\end{equation}

\begin{lemma} \label{lem:poisson} Let $F$ be a Schwartz function. Then for any odd integer $n$
\[
\sum_{(d,2)=1} \left( \frac{d}{n} \right) F\left( d\right)= \frac{1}{2n} \left( \frac{2}{n} \right) \sum_{\ell} (-1)^{\ell} G_{\ell}(n) \widetilde F\left( \frac{\ell }{2n} \right),
\]
where
\[
\widetilde F(\lambda)=\int_{\mathbb R}(\cos(2\pi \lambda \xi)+\sin(2\pi \lambda \xi))F(\xi) \, d\xi.
\]
\end{lemma}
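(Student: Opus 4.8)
The plan is to open the sum over odd $d$ into residue classes modulo $2n$, apply Poisson summation in each class, and then evaluate the resulting complete exponential sum by the Chinese Remainder Theorem. First note that, since $n$ is odd, the Kronecker symbol $\left(\frac{\cdot}{n}\right)$ agrees with the Jacobi symbol and is a completely multiplicative function of its top argument that is periodic modulo $n$; in particular $\left(\frac{d}{n}\right)$ depends only on $d\bmod n$, hence is constant on residue classes modulo $2n$. For each odd $a$ modulo $2n$, Poisson summation gives
\[
\sum_{m\in\mathbb Z}F(a+2nm)=\frac{1}{2n}\sum_{\ell\in\mathbb Z}e\left(\frac{a\ell}{2n}\right)\widehat F\left(\frac{\ell}{2n}\right),\qquad \widehat F(\xi)=\int_{\mathbb R}F(t)e(-\xi t)\,dt,
\]
and since $\widehat F$ is Schwartz the $\ell$-sum converges absolutely uniformly, so after interchanging sums
\[
\sum_{(d,2)=1}\left(\frac{d}{n}\right)F(d)=\frac{1}{2n}\sum_{\ell\in\mathbb Z}\widehat F\left(\frac{\ell}{2n}\right)\sum_{\substack{a\bmod 2n\\ a\text{ odd}}}\left(\frac{a}{n}\right)e\left(\frac{a\ell}{2n}\right).
\]

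Next I would evaluate the complete sum over $a$. Writing $\overline 2$ for the inverse of $2$ modulo $n$, the Chinese Remainder Theorem gives $\frac{1}{2n}\equiv\frac12+\frac{\overline 2}{n}\pmod 1$; since $a$ is odd this yields $e\left(\frac{a\ell}{2n}\right)=(-1)^\ell e\left(\frac{a\overline 2\,\ell}{n}\right)$, and as $a$ runs over the odd residues modulo $2n$ its reduction $b=a\bmod n$ runs exactly once over all residues modulo $n$. Hence the inner sum equals $(-1)^\ell\sum_{b\bmod n}\left(\frac{b}{n}\right)e\left(\frac{b\overline 2\,\ell}{n}\right)$, and the substitution $b\mapsto 2b$ turns this into $(-1)^\ell\left(\frac{2}{n}\right)\sum_{b\bmod n}\left(\frac bn\right)e\left(\frac{b\ell}{n}\right)$. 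By the definition of $G_\ell$ this last Gauss-type sum equals $\varepsilon_n^{-1}G_\ell(n)$, where $\varepsilon_n:=\frac{1-i}{2}+\left(\frac{-1}{n}\right)\frac{1+i}{2}\in\{1,-i\}$. Thus at this stage
\[
\sum_{(d,2)=1}\left(\frac{d}{n}\right)F(d)=\frac{1}{2n}\left(\frac{2}{n}\right)\varepsilon_n^{-1}\sum_{\ell\in\mathbb Z}(-1)^\ell G_\ell(n)\,\widehat F\left(\frac{\ell}{2n}\right).
\]

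It remains to absorb $\varepsilon_n^{-1}$ and convert $\widehat F$ into $\widetilde F$. From the identity $\cos\theta+\sin\theta=\frac{1+i}{2}e^{-i\theta}+\frac{1-i}{2}e^{i\theta}$ one gets $\widetilde F(\lambda)=\frac{1+i}{2}\widehat F(\lambda)+\frac{1-i}{2}\widehat F(-\lambda)$. Replacing $\ell$ by $-\ell$ in the term arising from $\widehat F(-\lambda)$ and using the two elementary facts $G_{-\ell}(n)=\left(\frac{-1}{n}\right)G_\ell(n)$ (substitute $a\mapsto -a$ in the definition of $G_\ell$) and $\varepsilon_n^{-1}=\overline{\varepsilon_n}=\frac{1+i}{2}+\left(\frac{-1}{n}\right)\frac{1-i}{2}$ (valid since $|\varepsilon_n|=1$), one obtains
\[
\sum_{\ell\in\mathbb Z}(-1)^\ell G_\ell(n)\,\widetilde F\left(\frac{\ell}{2n}\right)=\varepsilon_n^{-1}\sum_{\ell\in\mathbb Z}(-1)^\ell G_\ell(n)\,\widehat F\left(\frac{\ell}{2n}\right),
\]
which combined with the previous display proves the lemma. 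The main point requiring care is the bookkeeping of the normalising factor $\varepsilon_n$: one must verify that it appears in the Poisson evaluation exactly as $\varepsilon_n^{-1}$ and that, because the $G_\ell(n)$ are real (as is visible from \eqref{eq:def} together with multiplicativity) and $|\varepsilon_n|=1$, it is precisely what converts the exponential kernel $e(a\ell/2n)$ into the real kernel $\cos+\sin$ once one symmetrises $\ell\leftrightarrow-\ell$; the $\ell=0$ term needs no separate treatment since $G_0(n)=\varphi(n)\mathbf 1_{n=\square}$ and $\widetilde F(0)=\widehat F(0)$. Everything else is routine.
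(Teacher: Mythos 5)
Your proof is correct, and it is essentially the argument behind the result: the paper itself gives no details but simply cites the proof of Lemma 2.6 of \cite{SoundNonvanishing}, where the same steps appear — splitting the odd integers into residue classes modulo $2n$, applying Poisson summation, using $\tfrac{1}{2n}\equiv\tfrac12+\tfrac{\overline{2}}{n}\pmod 1$ to produce the factor $(-1)^{\ell}\left(\tfrac{2}{n}\right)$, and recognizing the Gauss-type sum as $\varepsilon_n^{-1}G_{\ell}(n)$ before symmetrizing $\ell\leftrightarrow-\ell$ via $G_{-\ell}(n)=\left(\tfrac{-1}{n}\right)G_{\ell}(n)$ to trade $\widehat F$ for $\widetilde F$. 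The only cosmetic remark is that the reality of $G_{\ell}(n)$ is not actually needed for this last step; the identity $G_{-\ell}(n)=\left(\tfrac{-1}{n}\right)G_{\ell}(n)$ alone suffices, exactly as your displayed computation shows.
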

\begin{proof}
This is established in the proof of Lemma 2.6 of \cite{SoundNonvanishing}, in particular see the last equation of the proof.
\end{proof}

Using Lemma \ref{lem:approxfunc} it follows that
$F( \cdot; x,n)$ is a Schwartz function, since $W$ and $\phi$ and their derivatives decay rapidly. Hence,  applying Lemma \ref{lem:poisson} the first sum on the RHS of \eqref{eq:split} equals (note $u$ is odd)

\begin{equation} \label{eq:poissondone}
\begin{split}
&2\sum_{\substack{a \le Y \\ (a,2u)=1}} \sum_{(n,a)=1}  \frac{\lambda_f(n)}{\sqrt{n}} \left(\frac{8}{nu} \right) \sum_{(m,2)=1} \left(\frac{m}{nu} \right) F\left(m;\frac{(-1)^k x}{8a^2},\frac{(-1)^kn}{8a^2} \right) \\
&=2\sum_{\substack{a \le Y \\ (a,2u)=1}} \mu(a) \sum_{(n,a)=1} \frac{\lambda_f(n)}{\sqrt{n}} \cdot \frac{1}{2nu} \left( \frac{16}{nu} \right) \sum_{\ell \in \mathbb Z} (-1)^{\ell} G_{\ell}(nu) \widetilde F\left( \frac{\ell}{2nu}; \frac{ x}{8a^2},\frac{ n}{8a^2}  \right),
\end{split}
\end{equation}
where
\[
 \widetilde F\left( \lambda; x, y\right)=\int_{\mathbb R}(\cos(2\pi \lambda \xi)+(-1)^k\sin(2\pi \lambda \xi))F(\xi;x,y) \, d\xi.
\]
Note that for odd $k$ after applying Lemma \ref{lem:poisson} in \eqref{eq:poissondone}  we also made the change of variables $\xi \rightarrow -\xi$.

Before proceeding we require several estimates for $\widetilde F$. The first such result is a basic estimate on the rate of decay of $\widetilde F$.

\begin{lemma} \label{lem:fourierbd}
We have
\[
\widetilde F(\lambda;x,y) \ll x \min\left( \left( \frac{x}{y}\right)^A, \frac{1}{|y\lambda|^A} \right)
\]
for any $A \ge 1$.
\end{lemma}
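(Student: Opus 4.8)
The plan is to establish two separate upper bounds, each valid for every $A\ge 1$,
\[
\widetilde F(\lambda;x,y)\ll_A x\Big(\frac{x}{y}\Big)^{A}\qquad\text{and}\qquad \widetilde F(\lambda;x,y)\ll_A x(y\lambda)^{-A},
\]
and then to observe that since $\min(a,b)$ always equals one of $a$, $b$, the conjunction of these two estimates is exactly the claimed bound. Fix $0<c_1<c_2$ with $\tmop{supp}\phi\subset[c_1,c_2]$; then in
\[
\widetilde F(\lambda;x,y)=\int_{\mathbb R}\big(\cos(2\pi\lambda\xi)+(-1)^k\sin(2\pi\lambda\xi)\big)\,\phi(\xi/x)\,W(y/\xi)\,d\xi
\]
the variable $\xi$ is confined to $[c_1x,c_2x]$, so $\xi\asymp x$, $y/\xi\asymp y/x$, and the domain of integration has measure $\ll x$. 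The one input on $W$ I would isolate first is that, for all $i\ge0$ and $A\ge1$,
\[
t^{i}W^{(i)}(t)\ll_{i,A}\min\!\big(1,t^{-A}\big)\qquad(t>0):
\]
the bound $\ll_A t^{-A}$ for $t$ bounded away from $0$ is the rapid decay recorded in Lemma~\ref{lem:approxfunc}, while for $0<t\le1$ one differentiates the Mellin representation of $W$ and shifts the contour to $\tmop{Re}(s)=-k+\varepsilon$ to get $t^{i}W^{(i)}(t)\ll_{i}t^{k-\varepsilon}\ll1$, using $k\ge2$.

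The first bound is then immediate: since $|\cos(2\pi\lambda\xi)+(-1)^k\sin(2\pi\lambda\xi)|\ll1$,
\[
|\widetilde F(\lambda;x,y)|\le\int_{c_1x\le\xi\le c_2x}|W(y/\xi)|\,d\xi\ll x\sup_{\xi\asymp x}|W(y/\xi)|\ll_A x\min\!\big(1,(x/y)^{A}\big)\le x(x/y)^{A}.
\]

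For the second bound I would integrate by parts. Writing $\cos(2\pi\lambda\xi)+(-1)^k\sin(2\pi\lambda\xi)$ as a fixed linear combination of $e(\lambda\xi)$ and $e(-\lambda\xi)$, and using that $\xi\mapsto\phi(\xi/x)W(y/\xi)$ is smooth and compactly supported so that no boundary terms arise, $j$ integrations by parts give $|\widetilde F(\lambda;x,y)|\ll_j\lambda^{-j}\int_{\mathbb R}|\tfrac{d^{j}}{d\xi^{j}}(\phi(\xi/x)W(y/\xi))|\,d\xi$. Expanding with the Leibniz and chain rules,
\[
\frac{d^{j}}{d\xi^{j}}\big(\phi(\xi/x)W(y/\xi)\big)=\sum_{a+b=j}\binom{j}{a}\frac{\phi^{(a)}(\xi/x)}{x^{a}}\cdot\frac{1}{\xi^{b}}\sum_{0\le i\le b}c_{b,i}\Big(\frac{y}{\xi}\Big)^{i}W^{(i)}(y/\xi)
\]
for suitable constants $c_{b,i}$ (the $b=0$ term being $W(y/\xi)$). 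On the support $\xi\asymp x$, $\phi^{(a)}(\xi/x)\ll_a1$, and by the displayed property of $W$ together with $y/\xi\asymp y/x$ every term is $\ll_{j,A}x^{-a}x^{-b}\min(1,(x/y)^{A})=x^{-j}\min(1,(x/y)^{A})$; since the support has measure $\ll x$ this gives
\[
|\widetilde F(\lambda;x,y)|\ll_{j,A}\lambda^{-j}\,x\,x^{-j}\min\!\big(1,(x/y)^{A}\big)=x(\lambda x)^{-j}\min\!\big(1,(x/y)^{A}\big).
\]
Taking $j=A$ and separating the cases $y\le x$ (where $(\lambda x)^{-A}\min(1,(x/y)^{A})=(\lambda x)^{-A}\le(\lambda y)^{-A}$) and $y>x$ (where it equals $(\lambda x)^{-A}(x/y)^{A}=(\lambda y)^{-A}$) yields $|\widetilde F(\lambda;x,y)|\ll_A x(y\lambda)^{-A}$ whenever $\lambda x\ge1$; when $\lambda x<1$ this is weaker than the trivial bound $|\widetilde F|\ll x$ and so holds in general. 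Combining the two displayed estimates finishes the proof.

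The argument is routine Fourier-analysis bookkeeping, and I do not expect a genuine obstacle. The only point that needs a moment's attention is that $W^{(i)}(t)$ itself need not remain bounded as $t\to0^{+}$ once $i>k$; but differentiating $W(y/\xi)$ always produces $W^{(i)}(y/\xi)$ with a compensating factor $(y/\xi)^{i}$, and $t^{i}W^{(i)}(t)\ll t^{k-\varepsilon}\ll1$ for $t\le1$ is exactly what the estimate needs.
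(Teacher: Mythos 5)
Your proof is correct and takes essentially the same route as the paper's: the bound $x(x/y)^{A}$ comes from the decay of $W$ on the compact support of $\phi$, and the bound $x|y\lambda|^{-A}$ from $A$-fold integration by parts using $t^{i}W^{(i)}(t)\ll\min\left(1,t^{-A}\right)$ (the paper phrases this after rescaling $\xi\mapsto x\xi$ and bounding $\mathcal W^{(A)}$, but it is the same computation). One cosmetic remark: your estimate $x(\lambda x)^{-j}\min\left(1,(x/y)^{A}\right)$ holds for every $\lambda\neq0$ with no restriction $\lambda x\ge1$, so the closing caveat (whose fallback claim that $x(y\lambda)^{-A}$ exceeds the trivial bound fails when $\lambda y>1>\lambda x$) is unnecessary — your two displayed cases already give the second bound in full.
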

\begin{proof}
Using the rapid decay of $W$ it follows that
\[
 W\left( \frac{y}{x \xi}\right) \ll \left|\frac{x \xi}{y} \right|^A
\]
so that
\[
\widetilde F(\lambda;x,y) \ll x \left(\frac{x}{y} \right)^A \int_{\mathbb R} |\xi|^A |\phi(\xi)| \, d\xi \ll x \left(\frac{x}{y} \right)^A.
\]

Next, write $\mathcal W(\xi)=W\left( \frac{y}{x \xi}\right)\phi(\xi)$ and note that for any $A,B,C \ge 0$, with $A \ge B$, and $|\xi| \gg 1$
\[
\left( \frac{y}{x}\right)^B W^{(C)} \left( \frac{y}{x \xi}\right) \ll \left(\frac{x}{y} \right)^{A}|\xi|^{A-B}
\]
so using this and the fact that  $\phi$ has compact support it follows that
\[
\mathcal W^{(A)}(\xi) \ll \left(\frac{x}{y} \right)^{A} \cdot \frac{1}{1+|\xi|^2}.
\]
Hence, integrating by parts and using the above bound it follows that
\[
\widetilde F(\lambda;x,y) \ll x \cdot \frac{1}{|x\lambda|^A} \int_{\mathbb R}  \big|\mathcal W^{(A)}(\xi) \big|\, d \xi  \ll x \cdot \frac{1}{|y\lambda|^A}.
\]
\end{proof}

We also require the following information about the Mellin transform of $\widetilde F$. Let
\[
\Phi(s)=\int_{0}^{\infty} \phi(t) t^{-s} \, ds. 
\]
\begin{lemma} \label{lem:continuation}
For $\ell \in \mathbb Z$ with $\ell \neq 0$, let
\[
\breve{F}(s;\ell,u, a^2)=\int_0^{\infty} \widetilde F\left( \frac{\ell}{2tu}; \frac{ x}{8a^2}, \frac{t}{8a^2}\right) \, t^{s-1} \, dt.
\]
Then for $\tmop{Re}(s)>0$
\begin{equation} \label{eq:breveF}
\breve{F}(s;\ell,u, a^2)= \frac{x^{1+s}}{8a^2} \Phi(-s) \int_{0}^{\infty} W\bigg( \frac{1}{y}\bigg) \left(\cos\left(\pi y \cdot \frac{\ell}{8ua^2} \right)+(-1)^k\sin\left(\pi y \cdot \frac{\ell}{8ua^2} \right)\right)\, \frac{dy}{y^{s+1}}.
\end{equation}
Moreover, the function $\breve{F}(s;\ell,u, a^2)$ extends to an entire function in the half-plane $\tmop{Re}(s)\ge -1+\varepsilon$ and in this region
\[
\breve{F}(s;\ell,u, a^2) \ll \frac{x^{1+\tmop{Re}(s)}}{a^2(1+|s|)^A}\left(\left(\frac{|\ell|}{ua^2}\right)^{\tmop{Re}(s)}+1\right)
\]
for any $A \ge 1$.
\end{lemma}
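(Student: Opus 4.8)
The plan is to derive \eqref{eq:breveF} by a direct unfolding and change of variables, and then to read off the continuation and the bound from the resulting factorization. Recall that $F(\xi;x,y)=\phi(\xi/x)W(y/\xi)$ and that $\phi$ is supported on a fixed compact subset of $(0,\infty)$. Substituting the definition of $\widetilde{F}$ into $\breve{F}(s;\ell,u,a^2)$ gives a double integral over $t\in(0,\infty)$ and $\xi\in(0,\infty)$ whose integrand, for $\tmop{Re}(s)>0$, is absolutely integrable: $\phi$ has compact support in $\xi$, while in $t$ one uses the rapid decay of $W$ near $0$ together with the boundedness of $W$ near $\infty$ (indeed $W(\eta)\to1$ as $\eta\to0$), so that $\int_0^\infty|W(1/y)|y^{-1-\tmop{Re}(s)}\,dy<\infty$. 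For each fixed $\xi$ I would then make the substitution $t\mapsto y:=8a^2\xi/t$, which turns $W(t/(8a^2\xi))$ into $W(1/y)$, turns $\pi\ell\xi/(tu)$ into $\pi y\,\ell/(8ua^2)$, and produces $dt=8a^2\xi\,y^{-2}\,dy$ and $t^{s-1}=(8a^2\xi)^{s-1}y^{1-s}$. After this the two variables separate; by Fubini the $\xi$-integral becomes $\int_0^\infty\phi(8a^2\xi/x)(8a^2\xi)^s\,d\xi=\tfrac{x^{1+s}}{8a^2}\Phi(-s)$ (put $\xi=xw/(8a^2)$ and recall $\Phi(-s)=\int_0^\infty\phi(w)w^s\,dw$), and the surviving $y$-integral is exactly the factor in \eqref{eq:breveF}; write it $G(s)=G(s;\ell,u,a^2)$. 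This proves \eqref{eq:breveF} for $\tmop{Re}(s)>0$.

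For the continuation and the bound, note first that $\Phi(-s)$ is entire and, since $\phi$ is smooth with compact support away from the origin, repeated integration by parts gives $\Phi(-s)\ll_{A}(1+|s|)^{-A}$ for every $A\ge1$, uniformly for $\tmop{Re}(s)$ in a bounded range. Hence it suffices to continue $G(s)$ to $\tmop{Re}(s)\ge-1+\varepsilon$ and to show $G(s)\ll(1+|s|)^{O(1)}\big((|\ell|/(ua^2))^{\tmop{Re}(s)}+1\big)$; multiplying by $x^{1+s}/(8a^2)$ and by $\Phi(-s)$ then gives the claim, the spurious factor $(1+|s|)^{O(1)}$ being swallowed by the decay of $\Phi(-s)$. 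To treat $G(s)$ I would split $W(1/y)=1+(W(1/y)-1)$ on $y\ge1$ and keep $W(1/y)$ as is on $0<y<1$. The piece over $0<y<1$ is entire in $s$ and $O(1)$ in any fixed vertical strip, because $W(1/y)$ decays faster than any power of $y$ as $y\to0^+$. On $y\ge1$ the contribution of $W(1/y)-1$ is, by the estimate $W(\eta)=1+O(\eta^{k-\varepsilon})$ near $0$ (so $W(1/y)-1\ll y^{-(k-\varepsilon)}$), dominated by $\int_1^\infty y^{-(k-\varepsilon)-\tmop{Re}(s)-1}\,dy$, which converges and is analytic for $\tmop{Re}(s)>-(k-\varepsilon)$, in particular on $\tmop{Re}(s)\ge-1+\varepsilon$ as $k\ge2$. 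Finally the main term $\int_1^\infty\big(\cos(\pi y\beta)+(-1)^k\sin(\pi y\beta)\big)y^{-s-1}\,dy$, with $\beta=\ell/(8ua^2)$, I would handle by splitting the range at $y=1/|\beta|$ when $|\beta|\le1$: on $[1,1/|\beta|]$ bound the oscillatory factor by $2$ and integrate $y^{-\tmop{Re}(s)-1}$ directly, getting $\ll|\beta|^{\tmop{Re}(s)}$ (or $O(1)$ if $\tmop{Re}(s)>0$); on $[1/|\beta|,\infty)$, and on all of $[1,\infty)$ when $|\beta|>1$, integrate by parts repeatedly using the nonvanishing derivative of the oscillatory factor, which simultaneously furnishes the analytic continuation to $\tmop{Re}(s)\ge-1+\varepsilon$ and a bound $\ll(1+|s|)^{O(1)}\big(|\beta|^{\tmop{Re}(s)}+1\big)$. (Alternatively, inserting the Mellin--Barnes integral for $W(1/y)$ and using $\int_0^\infty\cos v\,v^{z-1}\,dv=\Gamma(z)\cos(\pi z/2)$ with its sine analogue expresses $G(s)$ as a $w$-contour integral in which the $\Gamma$-factors give exponential decay by Stirling; shifting the contour according to the size of $|\beta|$ yields the same continuation and bound.)

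The main obstacle is this last step: extracting the bound with the correct, fully uniform dependence $(|\ell|/(ua^2))^{\tmop{Re}(s)}+1$ — reconciling the regimes $|\beta|$ small and $|\beta|$ large, and permitting $\tmop{Re}(s)$ to be mildly negative — while keeping only polynomial growth in $|s|$, so that it is absorbed by $\Phi(-s)$. The unfolding, the change of variables, and the identification of $\tfrac{x^{1+s}}{8a^2}\Phi(-s)$ are routine Mellin-transform bookkeeping.
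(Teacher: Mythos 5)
Your proposal is correct and follows essentially the same route as the paper: unfold $\widetilde F$, swap the order of integration and change variables to factor out $\frac{x^{1+s}}{8a^2}\Phi(-s)$, then continue and bound the remaining $y$-integral by splitting it, using the rapid decay of $W(1/y)$ near $y=0$, the expansion $W(1/y)=1+O(y^{-k+\varepsilon})$ for large $y$, and integration by parts on the pure oscillatory piece, with the polynomial growth in $|s|$ absorbed by the rapid decay of $\Phi(-s)$. The only (immaterial) difference is bookkeeping: the paper splits at $y=8ua^2/|\ell|$ and rescales the oscillatory integral to frequency one before integrating by parts, whereas you split at $y=1$ and integrate by parts with the frequency $\ell/(8ua^2)$ left in place.
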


\begin{proof}
Changing the order of integration and making a change of variables $x\xi/t \rightarrow \xi$ in the integral over $t$ it follows that
$\breve{F}(s;\ell,u, a^2)$ equals
\[
\begin{split}
& \frac{x}{8a^2}\int_0^{\infty}  \int_{\mathbb R}   \left(\cos\left(\pi \frac{\ell}{8ua^2} \cdot \frac{x \xi}{t} \right)+(-1)^k\sin\left(\pi \frac{\ell}{8ua^2} \cdot \frac{x \xi}{t} \right)\right) W\left( \frac{t}{x \xi}\right) \phi(\xi) t^{s-1}   \, d\xi \, dt\\
=&\frac{x^{1+s}}{8a^2}  \int_{\mathbb R}\int_0^{\infty} \phi(\xi)  \xi^s \cdot W\left( \frac{1}{t}\right) \left(\cos\left(\pi t \cdot \frac{\ell}{8ua^2}  \right)+(-1)^k\sin\left(\pi t \cdot \frac{\ell}{8ua^2}  \right)\right) \frac{dt}{t^{s+1}}d\xi ,
\end{split}
\]
which establishes the first claim.

The function 
\[
w(s)=\int_{0}^{\infty} W\bigg( \frac{1}{y}\bigg) \left(\cos\left(\pi y \cdot \frac{\ell}{8ua^2} \right)+(-1)^k\sin\left(\pi y \cdot \frac{\ell}{8ua^2} \right)\right)\, \frac{dy}{y^{s+1}}
\]
is holomorphic in the region $\tmop{Re}(s) \ge \varepsilon$. Write $w(s)=I_1(s)+I_2(s)$ where $I_1$ is the portion of the integral over $[0, \tfrac{8au^2}{|\ell|}]$ and $I_2$ is the rest. Due to the rapid decay of $W$, $I_1(s)$ is holomorphic in the region $\tmop{Re}(s) \ge -1$ and in this region
\begin{equation} \label{eq:I1bd}
I_1(s) \ll \int_0^{\frac{8ua^2}{|\ell|}} |W(1/y)| \, y^{-\tmop{Re}(s)-1} dy \ll \left(\frac{|\ell|}{ua^2}\right)^{\tmop{Re}(s)}+1.
\end{equation}

Next, write
\begin{equation} \label{eq:splitint}
I_2(s) = \left(\frac{|\ell|}{8ua^2} \right)^s \int_1^{\infty} \left(\cos\left( \pi y  \right)+(-1)^k\tmop{sgn}(\ell)\cdot\sin\left(\pi y \right)\right) \, \frac{dy}{y^{s+1}} +I_3(s).
\end{equation}
The integral $I_3(s)$ is holomorphic in $\tmop{Re}(s) \ge -1$ and uniformly in this region we have by Lemma \ref{lem:approxfunc}, that $W(1/y)=1+O(y^{-k+\varepsilon})$  we get
\[
I_3(s) \ll \int_{\frac{8au^2}{|\ell|}}^{\infty}\left|1-W\left(\frac{1}{y} \right) \right| \, \frac{dt}{y^{\tmop{Re}(s)+1}} \ll \left(\frac{|\ell|}{u a^2} \right)^{\tmop{Re}(s)}+1.
\]
The first integral on the RHS of \eqref{eq:splitint} can be analytically continued to $\tmop{Re}(s)>-1$ by integrating by parts. This provides the analytic continuation of $I_2(s)$ to $\tmop{Re}(s)\ge-1+\varepsilon$, and shows that in this region 
\[
I_2(s) \ll (|s|+1)\left( \left(\frac{|\ell|}{ua^2} \right)^{\tmop{Re(s)}}+1\right).
\]
Hence applying this estimate along with \eqref{eq:I1bd} in \eqref{eq:breveF} and noting $\Phi$ decays rapidly establishes the claim.
\end{proof}

\paragraph{\textbf{The terms with $a<Y$: main term analysis i.e. $\ell=0$}} Recall that $G_0(n)=\varphi(n)$ if $n=\square$ and is zero otherwise. So using Lemma \ref{lem:fourierbd}
the term with $\ell=0$ in \eqref{eq:poissondone} equals
\begin{equation} \label{eq:ellzero}
\begin{split}
& \sum_{(a,2u)=1} \mu(a) \sum_{\substack{(n,2a)=1 \\ nu=\square}} \frac{\lambda_f(n)}{\sqrt{n}} \frac{\varphi(nu)}{nu} \widetilde F\left(0; \frac{x}{8a^2},\frac{n}{8a^2} \right)+O\left( \frac{x^{1+\varepsilon}}{Y}\right) \\
& =\frac{x}{8}
\frac{1}{2\pi i} \int_{(c)} \left( \frac{x}{2\pi} \right)^s \frac{\Gamma(s+k)}{\Gamma(k)} \left( \sum_{\substack{a  \\(a,2u)=1}} \frac{\mu(a)}{a^2} \sum_{\substack{(n,2a)=1 \\ nu=\square}} \frac{\lambda_f(n)}{n^{s+1/2}} \frac{\varphi(nu)}{nu} \right) \, \Phi(-s) \frac{ds}{s} +O\left( \frac{x^{1+\varepsilon}}{Y}\right).
\end{split}
\end{equation}

We now evaluate the inner sum on the RHS \eqref{eq:ellzero} (note $\tmop{Re}(s)=c>0$). Since $nu=\square$ write $n=e^2 u_1$, (recall $u=u_1u_2^2$) so  
\begin{equation} \label{eq:switch}
\begin{split}
\sum_{(a,2u)=1} \frac{\mu(a)}{a^2} \sum_{\substack{(n,2a)=1 \\ nu=\square}} \frac{\lambda_f(n)}{n^{s+1/2}} \frac{\varphi(nu)}{nu}=&  \sum_{(e,2)=1} \frac{\lambda_f(u_1 e^2)}{(u_1e^2)^{s+1/2}} \frac{\varphi(e^2u_1u)}{e^2u_1u} \sum_{(a,2eu)=1} \frac{\mu(a)}{a^2} \\
=&\frac{4}{3\zeta(2) u_1^{s+\frac12}}  \sum_{(e,2)=1} \frac{\lambda_f(u_1 e^2)}{e^{2s+1}} \prod_{p|eu}\frac{1-\frac1p}{1-\frac{1}{p^2}}.
\end{split}
\end{equation}
The sum on the RHS can be expressed as an Euler product as follows. Let $r(n)=\prod_{p|n} \frac{1-\frac1p}{1-\frac{1}{p^2}}$. Also, let
\[
\widetilde \sigma_p(z;\alpha,\beta)=\sum_{j=0}^{\infty} \frac{\lambda_f(p^{2j+\alpha})}{p^{jz}} r(p^{j+\beta})
\]
and
\[
\mathcal G(z;u)=  \prod_{p|u_1} \frac{\widetilde \sigma_p(z;1,1)}{\widetilde \sigma_p(z;0,0)} \prod_{p|\frac{u_2}{(u_1,u_2)}} \frac{\widetilde \sigma_p(z;0,1)}{\widetilde \sigma_p(z;0,0)}.
\]
It follows that,
\begin{equation} \label{eq:factor}
 \sum_{(e,2)=1} \frac{\lambda_f(u_1 e^2)}{e^{2s+1}} \prod_{p|eu}\frac{1-\frac1p}{1-\frac{1}{p^2}}= L(1+2s, \tmop{sym}^2 f)\frac{\mathcal G(1+2s;u)}{\widetilde \sigma_2(1+2s;0,0)}.
\end{equation}

Also, that $\frac{\mathcal G(1+2s;u)}{\widetilde \sigma_2(1+2s;0,0)} \ll u^{\varepsilon} $ for $\tmop{Re}(s) \ge -\frac14$ and is analytic in this region. Hence, applying \eqref{eq:switch} and \eqref{eq:factor} in the RHS of \eqref{eq:ellzero} then shifting contours  $\tmop{Re}(s)=-\tfrac14+\varepsilon$, collecting the residue at $s=0$ we see that the RHS of \eqref{eq:ellzero} equals
\begin{equation} \label{eq:conclusionellzero}
 \frac{x}{\pi^2 u_1^{1/2}} \cdot L(1, \tmop{sym}^2 f)\frac{\mathcal G(1;u)}{\widetilde \sigma_2(1;0,0)} \cdot \Phi(0) +O\left(u^{\varepsilon} x^{3/4+\varepsilon} \right)+O\left( \frac{x^{1+\varepsilon}}{Y}\right).
\end{equation}
Finally, note $\mathcal G(1;\cdot)$ is a multiplicative function satisfying $\mathcal G(1;p^{2j+1})=\lambda_f(p)+O(1/p)$ and $G(1;p^{2j})=1+O(1/p)$. Set $\vartheta(n)=\mathcal G(1;n)$.

\vspace{.1 in}

\paragraph{\textbf{The terms with $a<Y$: Off-diagonal analysis i.e. $\ell \neq 0$}}

In \eqref{eq:poissondone}
it remains to bound
\begin{equation} \label{eq:offterms}
\sum_{\ell \neq 0}\sum_{\substack{ a \le Y \\ (a,2u)=1}} \bigg| \sum_{(n,2a)=1} \frac{\lambda_f(n)}{n^{1/2}} \left(\frac{4}{n} \right) \frac{G_{\ell}(nu)}{nu} \widetilde F\left(\frac{\ell}{2nu}; \frac{ x}{8a^2}, \frac{n}{8a^2} \right)\bigg|.
\end{equation}
First, note that by Lemma \ref{lem:fourierbd} the contribution from the terms with $|\ell| \ge Y^2 u x^{\varepsilon}$ to \eqref{eq:offterms} is bounded by
\begin{equation} \label{eq:trivellbd}
\ll
\sum_{|\ell| \ge Y^2 u x^{\varepsilon}} \sum_{a \le Y} \frac{x}{a^2}\left(\sum_{n \le |\ell|^{\varepsilon} x} \left( \frac{a^2 u}{ |\ell|} \right)^A+\sum_{n > |\ell|^{\varepsilon} x} \left( \frac{x}{n} \right)^A  \right)  \ll \frac{1}{x}
\end{equation}
where $A$ has been chosen sufficiently large with respect to $\varepsilon$.

It remains to estimate the terms in \eqref{eq:offterms} with $|\ell| \le Y^2 u x^{\varepsilon} $.
Let $\psi_{4\ell}(n)=\left( \frac{4\ell}{n}\right)$ and note that $\psi_{4\ell}$ is a character of modulus at most $4|\ell|$.
Using that $G_{\ell}$ is a multiplicative function, and using \eqref{eq:def}  we can write
\begin{equation} \label{eq:factorsum}
\sum_{(n,2a)=1} \frac{\lambda_f(n)}{n^{s}} \left(\frac{4}{n} \right) \frac{G_{\ell}(nu)}{ u\sqrt{n}}=L(s,f \otimes \psi_{4\ell}) R(s;\ell,u,a), \qquad \tmop{Re}(s)>1,
\end{equation}
where
\[
R(s;\ell,u,a)=\prod_{p} R_p(s;\ell,u,a).
\]
It follows that
for $(p,4au\ell)=1$
\[
R_p(s;\ell,u,a)=\left(1+\frac{\lambda_f(p)}{p^s}\left( \frac{4\ell}{p} \right)\right)\cdot \left(1- \frac{\lambda_f(p)}{p^s} \left(\frac{4\ell}{p}\right) +\frac{1}{p^{2s}}\right) =1+O\left( \frac{1}{p^{2\tmop{Re}(s)}}\right).
\]
For $p |au \ell$ and $\tmop{Re}(s) \ge \tfrac12+\varepsilon$, writing $p^{\theta} || u$, we have that
\[
\begin{split}
|R_p(s;\ell,u,a)|=&\bigg|\left(1+O\left(\frac{1}{p^{1/2}} \right) \right)
\cdot \left(\frac{G_{\ell}(p^{\theta})}{p^{\theta}}+O\left(\sum_{j \ge 1} \frac{p^{j+\theta}}{p^{j(\frac12+\varepsilon)} p^{j/2+\theta}} \right) \right)\bigg| \\
\le & \bigg(1+O\left( \frac{1}{p^{\varepsilon}}\right) \bigg).
\end{split}
\]
Also $R_2(s;\ell,u,a) \ll 1$ for $\tmop{Re}(s) \ge \frac12$.
Hence, for $0<a, u, |\ell| \le x^2$ and $\tmop{Re}(s) \ge \tfrac12+\varepsilon$
\begin{equation} \label{eq:Rbd}
R(s;\ell,u,a)\ll \prod_{p|au\ell} \left(1+\frac{O(1)}{p^{\varepsilon}} \right) \ll x^{\varepsilon}.
\end{equation}
So $R(\cdot; \ell, u,a)$ is an absolutely convergent Euler product and thereby defines a holomorphic function in the half-plane $\tmop{Re}(s)\ge \frac12+\varepsilon$. 
Hence, applying Mellin inversion and \eqref{eq:factorsum} we get for $c>0$ that
\begin{equation} \label{eq:mellinfactored}
\begin{split}
&\sum_{(n,2a)=1} \frac{\lambda_f(n)}{n} \left(\frac{4}{n} \right) \frac{G_{\ell}(nu)}{u\sqrt{n}} \widetilde F\left(\frac{\ell}{2nu}; \frac{ x}{8a^2}, \frac{n}{8a^2} \right)\\
&\qquad \qquad =\frac{1}{2\pi i} \int_{(c)} \breve{F}(s; \ell,u, a^2) L(1+s,f \otimes \psi_{4\ell}) R(1+s;\ell,u,a)\, ds .
\end{split}
\end{equation}

Using Lemma \ref{lem:continuation} and \eqref{eq:Rbd} we can shift contours in the integral above to $\tmop{Re}(s)=-\tfrac12+\varepsilon$, since the integrand is holomorphic in this region.
Using the convexity bound 
\[
L(\tfrac12+\varepsilon+it, f \otimes \psi_{4\ell}) \ll (1+|t|)^{1/2+\varepsilon} |\ell|^{1/2+\varepsilon}
\]
along with Lemma \ref{lem:continuation} and \eqref{eq:Rbd} it follows that for $|\ell|,u,a \le x$
\begin{equation} \label{eq:shiftedintbd}
\frac{1}{2\pi i} \int_{(-\frac{1}{2}+\varepsilon)} \breve{F}(s; \ell,u, a^2) L(1+s,f \otimes \psi_{4\ell}) R(1+s;\ell,u,a)\, ds ds \ll \frac{x^{\frac12+\varepsilon}}{a^2} \cdot  \left(\left( \frac{ua^2}{|\ell|} \right)^{1/2}+1 \right) |\ell|^{1/2}. 
\end{equation}
Applying \eqref{eq:mellinfactored} and \eqref{eq:shiftedintbd}
in \eqref{eq:offterms} the terms with $|\ell| \le Y^2 x^{\varepsilon} u$ in \eqref{eq:offterms} are bounded by
\begin{equation} \label{eq:finaloffbd}
x^{\frac12+\varepsilon} \sum_{1 \le |\ell| \le Y^2 x^{\varepsilon} u}\sum_{a \le Y}\frac{1}{a^2}\left( \left( \frac{ua^2}{|\ell|}\right)^{1/2}+1\right) |\ell|^{1/2}
\ll x^{\frac12+\varepsilon} Y^3 u^{3/2}.
\end{equation}

\paragraph{\textbf{Completion of the proof of Proposition \ref{prop:twisted}}}
Applying \eqref{eq:conclusionellzero}, \eqref{eq:trivellbd}, \eqref{eq:finaloffbd} in \eqref{eq:poissondone} and then using the resulting formula along with \eqref{eq:largeabd} in \eqref{eq:split} it follows that
\begin{equation} \label{eq:Cdef}
\begin{split}
&\sum_{\substack{m \\ 2m \text{ is } \square-\text{free}}} L(\tfrac12, f \otimes \chi_{8m}) \chi_{8m}(u) \, \phi\left( \frac{(-1)^k 8m}{x}\right)\\
& \qquad \qquad =
\frac{x}{\pi^2 u_1^{1/2}} \cdot L(1, \tmop{sym}^2 f)\frac{\mathcal G(1;u)}{\widetilde \sigma_2(1;0,0)} \cdot \Phi(0)+O\left( \frac{x^{1+\varepsilon}}{Y}+x^{\frac12+\varepsilon} Y^3 u^{3/2}\right).
\end{split}
\end{equation}
Choosing $Y=x^{1/8}u^{-3/8}$ and recalling the estimates given for $\mathcal G(1;\cdot)$ after \eqref{eq:conclusionellzero} completes the proof.

\section{The Proof of Proposition \ref{mainprop}}

In order to establish Proposition \ref{mainprop} we will need the following variant of the shifted convolution problem for coefficients of half-integral weight forms. 

\begin{proposition} \label{scpprop}
  Uniformly in $1 \leq \Delta \leq X^{\frac{1}{52}}$, $0 < |h| < X^{\frac{1}{2}}$ and $(v, \Delta) = 1$, we have for every $\varepsilon > 0$
  \begin{equation} \label{eq:mainscp}
  \frac{1}{X^{k - 1/2}} \Big | \sum_{n} c(n) n^{\frac{k - 1/2}{2}} e \Big ( \frac{n v}{\Delta} \Big ) e^{- \frac{2\pi n}{X}} c(n + h) (n + h)^{\frac{k - 1/2}{2}} e^{- \frac{2\pi (n + h)}{X} } \Big |  \ll X^{1 - \frac{1}{104} + \varepsilon}. 
  \end{equation}
\end{proposition}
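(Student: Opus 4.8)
The plan is to read the left side of \eqref{eq:mainscp} as a bound for an $h$-th Fourier coefficient of a weight-zero automorphic function attached to $g$, and then to estimate it by the shifted-convolution method of Selberg and Good (in the sharpened forms of Jutila, Blomer, Harcos and others). Write $a(n) = c(n)\,n^{(k-1/2)/2}$ for the raw Fourier coefficients, so that $g(z) = \sum_{n \ge 1} a(n) e(nz)$ and the left side of \eqref{eq:mainscp} equals $X^{-(k-1/2)}|S_h|$ with
\[
S_h := \sum_{n \ge 1} a(n)\, a(n+h)\, e\!\Big(\tfrac{nv}{\Delta}\Big)\, e^{-\tfrac{2\pi(2n+h)}{X}}.
\]
Since $e^{-2\pi(2n+h)y}$ is exactly the Whittaker factor in the $h$-th Fourier coefficient of $|g|^2 y^{k+1/2}$ evaluated at $y = 1/X$, one finds $S_h = e(-hv/\Delta)\,X^{k+1/2}\int_0^1 \Psi(x + \tfrac{i}{X})\,e(-hx)\,dx$, where $\Psi(z) := g(z+\tfrac{v}{\Delta})\,\overline{g(z)}\,(\mathrm{Im}\,z)^{k+1/2}$. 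The key point is that $\Psi$ is a \emph{weight-zero} automorphic function: the theta-multiplier cancels in $|g|^2 y^{k+1/2}$, so (using $(v,\Delta)=1$) $\Psi$ is invariant under a congruence subgroup $\Gamma' \subseteq \Gamma_0(4)$ of index $\ll \Delta^{O(1)}$; moreover, because $g$ is holomorphic and $\mathrm{Im}(z+v/\Delta) = \mathrm{Im}\,z$, a Cauchy estimate shows $\Psi$ is bounded together with all its hyperbolic Laplacians, uniformly in $v,\Delta,X$, and it decays exponentially at every cusp of $\Gamma'$; hence every Sobolev norm of $\Psi$ on $\Gamma'\backslash\mathbb H$ is $\ll_{g,j}\mathrm{vol}(\Gamma'\backslash\mathbb H)^{1/2}\ll\Delta^{O(1)}$.

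First I would separate the weight by Mellin inversion, obtaining $S_h = e(-hv/\Delta)\,X^{k-1/2}\cdot\tfrac{1}{2\pi i}\int_{(1+\varepsilon)} X^{s}\,\langle \Psi, U_h(\cdot,\bar s)\rangle\, ds$, where $U_h(z,s) = \sum_{\gamma \in \Gamma'_\infty\backslash\Gamma'} (\mathrm{Im}\,\gamma z)^{s} e(h\,\mathrm{Re}\,\gamma z)$ is the weight-zero Poincaré series of $\Gamma'$ at $\infty$; unfolding identifies $\langle \Psi, U_h(\cdot,\bar s)\rangle$, up to $\Gamma$-factors that cancel those from the Mellin transform, with the shifted Dirichlet series $\sum_n a(n)a(n+h)e(nv/\Delta)(2n+h)^{-s-k+1/2}$, which converges for $\mathrm{Re}(s)>1$. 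The heart of the argument is the meromorphic continuation of $U_h(\cdot,s)$ into $\mathrm{Re}(s) > \tfrac12$ via its spectral expansion on $\Gamma'\backslash\mathbb H$: writing $\Psi = \langle\Psi,1\rangle(\text{const}) + \sum_j \langle\Psi,u_j\rangle u_j + (\text{Eisenstein})$, the quantity $\langle U_h(\cdot,\bar s),u_j\rangle$ unfolds to $\rho_j(h)\,\mathcal K(\bar s,t_j)$ — the $h$-th Fourier coefficient of $u_j$ times an explicit ratio of $\Gamma$'s (a Hankel transform), holomorphic in $\mathrm{Re}(s)>\tfrac12$ away from the poles at $s = \tfrac12 + \sigma_j$ from exceptional eigenvalues $\lambda_j = \tfrac14 - \sigma_j^2 < \tfrac14$ — and similarly for the Eisenstein part. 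For $h \ne 0$ the constant/residual term contributes nothing (its $h$-th Fourier coefficient vanishes), and by Kim--Sarnak all $\sigma_j \le \tfrac7{64}$; I would therefore shift the $s$-contour to $\mathrm{Re}(s) = 1 - \delta_0$ for a fixed $\delta_0 \in (0,\ \tfrac12 - \tfrac7{64})$, which crosses no pole.

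On the new contour I would bound $\langle\Psi,U_h(\cdot,\bar s)\rangle = \sum_j \overline{\langle\Psi,u_j\rangle}\,\rho_j(h)\,\mathcal K(\bar s,t_j) + (\text{Eis})$ by Cauchy--Schwarz: the factor $\big(\sum_j|\langle\Psi,u_j\rangle|^2(1+|t_j|)^{2A}\big)^{1/2}$ is $\ll_{g,A}\mathrm{vol}(\Gamma'\backslash\mathbb H)^{1/2}$ by the Sobolev bound (and the decay of $\langle\Psi,u_j\rangle$ in $|t_j|$ truncates the sum at $|t_j|\le X^\varepsilon$ with negligible error), while in the complementary factor the growth $|\rho_j(1)|^2 \asymp \cosh(\pi t_j)/(\mathrm{vol}(\Gamma'\backslash\mathbb H)\,L(1,\mathrm{sym}^2 u_j))$ is matched by the $e^{-\pi|t_j|/2}$ decay of $\mathcal K(\bar s,t_j)$, and the spectral large sieve of Deshouillers--Iwaniec for $\Gamma'$ bounds what remains (using also $L(1,\mathrm{sym}^2 u_j)\gg(t_j\Delta)^{-\varepsilon}$). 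The net effect is $|\langle\Psi,U_h(\cdot,\bar s)\rangle|\ll_\varepsilon \Delta^{O(1)}|h|^{\varepsilon}X^\varepsilon$ on $\mathrm{Re}(s)=1-\delta_0$, uniformly in $v,h,\Delta$; together with the vertical decay in $\mathrm{Im}(s)$ (again from smoothness of $\Psi$) this gives $X^{-(k-1/2)}|S_h|\ll X^{1-\delta_0}\Delta^{O(1)}X^\varepsilon$. Since $\Delta < X^{1/52}$, for $\delta_0$ close to $\tfrac12-\tfrac7{64}$ this is $\ll X^{1-1/104+\varepsilon}$ with a large margin — the true exponent being near $\tfrac12+\tfrac7{64}$ if one instead shifts past the exceptional spectrum and estimates the residues using $|\lambda_j(h)|\ll|h|^{7/64+\varepsilon}$. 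So the generous exponent $1/104$ is all that is needed, and every estimate above may be carried out crudely.

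The step I expect to be the main obstacle is the \emph{level aspect}: carrying out the continuation of $U_h(\cdot,s)$ and, above all, the spectral large sieve with enough uniformity in the twist modulus $\Delta$ (through the congruence subgroup $\Gamma'$ of index $\ll\Delta^{O(1)}$), while correctly tracking the exceptional eigenvalues and the normalizing constants $|\rho_j(1)|$ across the contour shift; the archimedean analysis is routine since $g$ is holomorphic with $k\ge2$. An alternative, more classical route — detect $m-n=h$ with the Duke--Friedlander--Iwaniec $\delta$-symbol with parameter $Q\asymp X^{1/2}$, apply Voronoi summation for half-integral-weight forms on $\Gamma_0(4)$ to the resulting sums over $n$ and over $m$, and bound the ensuing Kloosterman sums by Weil — also yields \eqref{eq:mainscp}, but the additive twist $e(nv/\Delta)$ interacts awkwardly with the $\delta$-symbol and must be absorbed into a twisted Voronoi formula; the spectral route handles the twist cleanly, at the modest cost of enlarging the level.
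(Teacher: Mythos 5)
Your proposal is correct in outline but follows a genuinely different route from the paper. The paper never touches the spectral theory of Maass forms: it detects the shift with Jutila's variant of the circle method (Lemma \ref{le:circle}), choosing moduli $q=4\Delta r$ with $r$ prime so that the additive twist $e(nv/\Delta)$ is absorbed directly into the major-arc fractions $d/q+v/\Delta$; it then applies the explicit $\Gamma_0(4)$-transformation of $g$ at each such fraction (Lemma \ref{le:modularity}), expands both factors into Fourier series at the new cusp, and bounds the resulting complete Kloosterman sums modulo the prime $r$ by Weil, which with $Q=X^{1/2+2\eta}$, $\eta=1/13$ yields $X^{1-1/104}$ uniformly for $\Delta\le X^{1/52}$ by essentially self-contained means. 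Your Selberg--Good/Blomer--Harcos route through the weight-zero object $\Psi=g(z+v/\Delta)\overline{g(z)}y^{k+1/2}$, Poincar\'e series unfolding and the spectral expansion would, if carried out, give a stronger exponent, but it buys this at exactly the cost you flag: level-aspect uniformity. Two specific points need attention. First, the multiplier does not literally cancel in $\Psi$: for $\gamma\in\Gamma'=\Gamma_0(4)\cap T^{-1}\Gamma_0(4)T$ (with $T$ the translation by $v/\Delta$) the automorphy factor is $\nu(T\gamma T^{-1})\overline{\nu(\gamma)}$, a nontrivial finite-order character of $\Gamma'$, so you must pass to its kernel (still of index $\Delta^{O(1)}$) or carry the character through the spectral decomposition. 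Second, your final bound has the shape $X^{1-\delta_0}\Delta^{O(1)}$ times unspecified $h$-dependence, and the proposition requires uniformity for $\Delta$ up to $X^{1/52}$ and $|h|$ up to $X^{1/2}$; since the admissible loss over $X^{1-1/104}$ is limited, the implicit exponents (index of $\Gamma'$, Sobolev norms of $\Psi$, the level and $h$-dependence in the large sieve and in bounds for $\rho_j(h)$ at non-squarefree level, including oldforms) must actually be computed rather than asserted --- they are expected to be small enough, but ``with a large margin'' is not automatic. The paper's choice trades the better exponent for transparency: the only arithmetic input is Weil's bound, the $\Delta$-dependence sits visibly inside the moduli, and no exceptional-eigenvalue or Maass-coefficient input is needed.
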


\subsection{The shifted convolution problem}

We begin with a proof of Proposition \ref{scpprop}. The proof is based on the by now standard combination of the circle method and modularity. Recall that a weight $k+\tfrac12$ modular form (with trivial character) transforms under $\Gamma_0(4)$ in the following way
\begin{equation} \label{eq:transformation}
  g ( \gamma z) = \nu(\gamma) j_{\gamma}(z)^{k + \frac 12}g(z), \qquad \forall \gamma \in \Gamma_0(4),
\end{equation}
where for $\gamma=
\begin{pmatrix}
a & b \\
c& d
\end{pmatrix}$ we set
$
j_{\gamma}(z)=cz+d$ and  $\nu(\gamma)=  \Big( \frac{c}{d}\Big) \overline {\varepsilon_d}$, where $\Big( \frac{\cdot}{\cdot}\Big)$ denotes the quadratic residue symbol in the sense of Shimura \cite{Shimura1} (see \textit{Notation} 3)  and
  $$
  \varepsilon_{d} := \begin{cases}
    1 & \text{ if } d \equiv 1 \pmod{4}, \\
    i^{2k + 1} & \text{ if } d \equiv - 1 \pmod{4}.
    \end{cases}
    $$
    Also, we record the following estimate for the Fourier coefficients of $g$
\begin{equation} \label{eq:Parseval}
\sum_{n \leq X} |c(n)|^2 \ll X
\end{equation}
(see \cite[Theorem 5.1]{IwaniecBook} and use partial summation). This implies that
\begin{equation} \label{eq:Fourierbd}
|c(n)| \ll n^{1/2+\varepsilon}.
\end{equation}

We record below a few standard lemmas. 

\begin{lemma} \label{le:circle}

  Let $\eta \in (0, 1]$ and $Q \geq 1$ be given. Let $Q^{\eta / 2} \geq \Delta \geq 1$ be an integer.
  Let $I_{d/q}(\cdot)$ denote the indicator function of the interval $[d / q - Q^{- 2 + \eta}; d / q + Q^{-2 + \eta}]$. Let $\mathcal{Q}$ denote the set of integers $n \in [Q, 2Q]$ that can be written as $4 \Delta r$ with $r \equiv 1 \pmod{4}$ prime. For $\alpha \in \mathbb R$, put
  $$
  \widetilde{I}(\alpha) = \frac{Q^{2 - \eta}}{2 L} \sum_{q \in \mathcal{Q}} \, \, \sumstar_{d \pamod q} I_{d/q}(\alpha)  \ \quad \text{ and } \quad \ L = \sum_{q \in \mathcal{Q}} \varphi(q).
  $$
  Finally let $I(\cdot)$ denote the indicator function of the interval $[0,1]$. 
  Then, for every $\varepsilon > 0$
  $$
  \int_{\mathbb R} (I(\alpha) - \widetilde{I}(\alpha))^2 d \alpha \ll_{\varepsilon} Q^{-\eta / 2 + \varepsilon}. 
  $$
  \end{lemma}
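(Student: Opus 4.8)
The plan is to treat $\widetilde I$ as an $L^2$-approximate identity for $1_{[0,1]}$, in the spirit of Jutila's circle method with averaged moduli, and run a second-moment computation. Write $\delta := Q^{-2+\eta}$ for the half-length of the intervals, so $I_{d/q}(\alpha) = 1_{\{|\alpha-d/q|\le\delta\}}$. The first observation is that, since $q\le 2Q$ and $\delta < 1/(2Q)\le 1/q$ once $\eta<1$ and $Q$ is large, every interval $I_{d/q}$ with $q\in\mathcal Q$ and $(d,q)=1$ lies in $(0,1)$; hence $\widetilde I$ is supported in $[0,1]$ and $\int_{\mathbb R}\widetilde I = \frac{Q^{2-\eta}}{2L}\sum_{q\in\mathcal Q}\varphi(q)\cdot 2\delta = 1$. (For $\eta=1$ an interval can protrude past an endpoint of $[0,1]$, but only by $\le\delta$, which contributes a negligible $O(Q^{-1+\varepsilon})$ to all of the estimates below; I suppress this case.) Expanding the square, $\int_{\mathbb R}(I-\widetilde I)^2 = \|\widetilde I\|_2^2 - 2\int_0^1\widetilde I + 1 = \|\widetilde I\|_2^2 - 1$, so it suffices to prove $\|\widetilde I\|_2^2 = 1 + O_\varepsilon(Q^{-\eta/2+\varepsilon})$.

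To compute $\|\widetilde I\|_2^2$ I would open the two copies of $\widetilde I$ and use $\tmop{meas}(I_{d_1/q_1}\cap I_{d_2/q_2}) = G(d_1/q_1 - d_2/q_2)$, where $G := 1_{[-\delta,\delta]}*1_{[-\delta,\delta]}$ is the tent function on $[-2\delta,2\delta]$. Since no fraction $d/q$ with $q\in\mathcal Q$ lies within $2\delta$ of $0$ or of $1$, one may replace $G$ by its $1$-periodization and expand in a Fourier series $G(x) = \sum_{m\in\mathbb Z}\widehat G(m)e(mx)$, with $\widehat G(0) = 4\delta^2$ and $\widehat G(m) = (\sin(2\pi\delta m)/(\pi m))^2$ for $m\neq 0$. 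Performing the now-complete sums over $d_1\bmod q_1$ and $d_2\bmod q_2$ brings in Ramanujan sums $c_q(m) := \sumstar_{d\bmod q}e(md/q)$ and yields
\[
\|\widetilde I\|_2^2 = \Big(\frac{Q^{2-\eta}}{2L}\Big)^{2}\sum_{m\in\mathbb Z}\widehat G(m)\Big(\sum_{q\in\mathcal Q}c_q(m)\Big)^{2}.
\]
The $m=0$ term equals $\big(\frac{Q^{2-\eta}}{2L}\big)^{2}\cdot 4\delta^2\cdot\big(\sum_{q}\varphi(q)\big)^{2} = \big(\frac{Q^{2-\eta}}{2L}\big)^{2} 4\delta^2 L^2 = 1$ exactly, by the choice $\delta = Q^{-2+\eta}$. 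Thus $\|\widetilde I\|_2^2-1 = \big(\frac{Q^{2-\eta}}{2L}\big)^{2}\sum_{m\neq 0}\widehat G(m)\big(\sum_q c_q(m)\big)^2\ge 0$, and the whole problem is to bound this tail.

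Here the arithmetic of $\mathcal Q$ is used decisively: writing $q=4\Delta r$ with $r$ in $\mathcal R := \{r\equiv 1\ (4)\ \text{prime}:\ Q/(4\Delta)\le r\le Q/(2\Delta)\}$, multiplicativity of Ramanujan sums together with $c_r(m)=r-1$ if $r\mid m$ and $c_r(m)=-1$ otherwise give $\sum_{q\in\mathcal Q}c_q(m) = c_{4\Delta}(m)\big(\sum_{\substack{r\in\mathcal R\\ r\mid m}}r\ -\ |\mathcal R|\big)$ as long as no $r\in\mathcal R$ divides $4\Delta$ (the $O(1)$ exceptional $r$, which can occur only when $\eta$ is close to $1$, are handled trivially via $|c_q(m)|\le\gcd(m,q)$). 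Since any $m$ in the effective range $1\le|m|\le Q^{O(1)}$ has only $O(1)$ prime factors in $\mathcal R$, this is $\ll |c_{4\Delta}(m)|\cdot Q/\Delta$, and, separating according to whether $m$ is divisible by some $r\in\mathcal R$, the tail is
\[
\ll \frac{Q^{4-2\eta}}{L^2}\Big(\frac{Q}{\Delta}\Big)^{2}\sum_{r\in\mathcal R}\sum_{m'\neq 0}\widehat G(rm')\,c_{4\Delta}(m')^{2}\ +\ \frac{Q^{4-2\eta}}{L^2}\,|\mathcal R|^{2}\sum_{m\neq 0}\widehat G(m)\,c_{4\Delta}(m)^{2}.
\]
Using $\widehat G(m)\ll\min(\delta^2,m^{-2})$ and $|c_{4\Delta}(m)|\le\gcd(m,4\Delta)$, a routine split of $m$ by $\gcd(m,4\Delta)$ (all of whose divisors are $\le 4\Delta\ll 1/\delta$) gives $\sum_{m\neq 0}\widehat G(m)c_{4\Delta}(m)^2\ll\delta\,\sigma(4\Delta)$, where $\sigma$ is the divisor-sum, while the constraint $r\mid m$ saves an extra $|\mathcal R|/(Q/\Delta)\asymp 1/\log Q$ on average over $r$. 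Feeding in the prime-number-theorem estimates $|\mathcal R|\asymp Q/(\Delta\log Q)$ and $L\asymp\varphi(4\Delta)Q^2/(\Delta^2\log Q)$ — the modulus being $4$, there is no exceptional-zero issue — together with $\sigma(4\Delta)\ll\Delta\log\log\Delta$ and $\Delta/\varphi(4\Delta)\ll\log\log\Delta$, both terms collapse to $\ll Q^{-\eta/2}(\log Q)^{O(1)}\ll_\varepsilon Q^{-\eta/2+\varepsilon}$, on using $\Delta\le Q^{\eta/2}$. This yields $\|\widetilde I\|_2^2 = 1+O_\varepsilon(Q^{-\eta/2+\varepsilon})$ and hence the lemma.

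The main obstacle is that this final bookkeeping is genuinely tight. Because $\Delta$ is permitted to be as large as $Q^{\eta/2}$, the target saving $Q^{\eta/2}$ must absorb exactly the powers of $\Delta$ thrown up by the off-diagonal — from $\gcd(m,4\Delta)$, from the count of $m\le Q^{O(1)}$ divisible by a prime of $\mathcal R$, from $\sigma(4\Delta)$, and from $\Delta/\varphi(4\Delta)$ — leaving only logarithmic slack; this is precisely what pins down the choice of $\mathcal Q$ as moduli of the form $4\Delta\cdot(\text{prime})$, which makes $\gcd(q_1,q_2)$ the single fixed number $4\Delta$ and makes $c_q(m)$ factor transparently, whereas a more naive family would render the $m\neq 0$ sum much harder to control uniformly in $\Delta$. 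A secondary, routine point is the justification of the Fourier step — that no interval $I_{d/q}$ reaches $\{0,1\}$ and that none overlap ``around the circle'' — which is where the size relation $q\le 2Q<Q^{2-\eta}$ enters.
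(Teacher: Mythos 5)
Your argument is correct and amounts to a self-contained proof, whereas the paper disposes of the lemma in one line by quoting Jutila's variance estimate for his variant of the circle method (Proposition 2 of Harcos) and checking that $L \asymp \varphi(4\Delta)\Delta^{-2}Q^{2}/\log Q$. Your Parseval/Ramanujan-sum computation is essentially a re-derivation of that estimate, but specialized to the rigid family $\mathcal{Q}=\{4\Delta r\}$, and the specialization buys something real: the black-box form of Jutila's bound as usually quoted, $\int_{0}^{1}(I-\widetilde I)^{2}\,d\alpha \ll_{\varepsilon} Q^{2+\varepsilon}\delta^{-1}L^{-2}$ with $\delta=Q^{-2+\eta}$, evaluates here to roughly $Q^{-\eta+\varepsilon}\Delta^{2}$, which meets the stated target $Q^{-\eta/2+\varepsilon}$ only for $\Delta\le Q^{\eta/4}$, while your factorization $\sum_{q\in\mathcal Q} c_{q}(m)=c_{4\Delta}(m)\bigl(\sum_{r\in\mathcal R,\,r\mid m}r-|\mathcal R|\bigr)$ — exploiting that all moduli share the fixed part $4\Delta$ and differ by a single large prime — gives the stronger bound of order $Q^{-\eta+\varepsilon}\Delta$, hence covers the full stated range $\Delta\le Q^{\eta/2}$ and shows the variance is essentially dominated by the diagonal term $1/(2\delta L)$. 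Your bookkeeping checks out: $\sum_{m\neq 0}\widehat G(m)c_{4\Delta}(m)^{2}\ll\delta\sigma(4\Delta)$, the extra saving $\asymp 1/\log Q$ from the divisibility $r\mid m$, and the final assembly with $L\asymp\varphi(4\Delta)Q^{2}/(\Delta^{2}\log Q)$ all give $\ll Q^{-\eta}\Delta\,(\log Q)^{O(1)}\le Q^{-\eta/2+\varepsilon}$. The points you gloss over are genuinely routine: truncating the Fourier expansion at $|m|\le Q^{O(1)}$ (negligible tail since $\widehat G(m)\ll m^{-2}$ and $|c_{q}(m)|\le q$), the $O(1)$ primes $r\in\mathcal R$ dividing $4\Delta$, and the endpoint/wraparound effects when $\eta$ is near $1$ (intervals protruding past $0$ or $1$), each contributing well within $Q^{-\eta/2+\varepsilon}$; it would be worth stating explicitly that the whole discussion presupposes $\mathcal Q\neq\emptyset$ (so $L>0$), as does the paper's formulation.
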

\begin{proof}
  This is a consequence of a result of Jutila (see \cite{Jutila} or \cite[Proposition 2]{Harcos}). In the notation of \cite[Proposition 2]{Harcos} we specialize to $\delta = Q^{-2 + \eta}$ and notice that $L \asymp \frac{\varphi(4 \Delta)}{\Delta^2} \cdot \frac{Q^2}{\log Q}$.
\end{proof}

% We will also need the following simple point-wise bound.

% \begin{lemma} \label{le:steiner}
%   Let $f$ be a half-integral weight form of weight $k + \tfrac 12$ with $k \geq 2$ integer, level $4$ and belonging to the Kohnen subspace. Then, for any $\alpha \in \mathbb{R}$, and $X \geq 1$, 
%   $$
%   \Big | f \Big ( \alpha + \frac{i}{X} \Big ) \Big | \ll_{k} X^{\frac{k + 1/2}{2}}  
%   $$
% \end{lemma}
% \begin{proof}
%   See \cite[Theorem 1]{Steiner}. 
% \end{proof}

Crucial to our analysis is the following consequence of the modularity of $g$. 

\begin{lemma} \label{le:modularity}
  Let $g$ be a cusp form of weight $k + \tfrac 12$ of level $4$ where $k \geq 2$ is an integer. 
    Let $(d,q) = 1$ and $(v,\Delta) = 1$ be two pairs of co-prime integers. Suppose that $q = 4 \Delta r$ with $r > 4 \Delta$ a prime congruent to $1 \pmod{4}$.   Write $d = r b + 4 \Delta \ell$ with $(b, 4 \Delta) = 1$ and $(\ell, r) = 1$. Let $\Delta^{\star} = \Delta / (4 v + b, \Delta)$ and $(4v + b)^{\star} = (4 v + b) / (4v + b, \Delta)$.  
  Then, for any $\beta \in \mathbb{R}$ and any real $X \geq 1$
  \begin{align*} 
  g \Big ( \frac{d}{q} + \frac{v}{\Delta} & + \beta + \frac{i}{X} \Big ) = \overline{\varepsilon_{(4 v + b)^{\star}}}\chi_{4 \Delta^{\star}} ( (4 v + b)^{\star} r ) \chi_{r} ( 4 \Delta^{\star} \ell ) \\ & \times  \Big ( \frac{X}{4 \Delta^{\star} r} \Big )^{k + \frac 12} \Big ( \frac{i}{1 - i \beta X} \Big )^{k + \frac 12}g \Big ( - \frac{\overline{r^2 (4 v + b)^{\star}}}{4 \Delta^{\star}} - \frac{\overline{16 (\Delta^{\star})^2 \ell}}{r} + \frac{i}{(4 \Delta^{\star} r)^2 ( \frac{1}{X} - i \beta)} \Big ),
  \end{align*}
  where $\chi_{t}(\cdot)$ denotes a real Dirichlet character of modulus $t$. Finally whenever we write $\frac{\overline{a}}{q}$ we denote by $\overline{a}$ an integer such that $\overline{a} a \equiv 1 \pmod{q}$. 
\end{lemma}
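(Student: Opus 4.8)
The plan is to deduce the stated identity from the automorphy relation \eqref{eq:transformation} applied to a single, carefully chosen matrix $\gamma \in \Gamma_0(4)$, after first merging the two rational translations $d/q$ and $v/\Delta$ into one fraction. Using $q = 4\Delta r$ and $d = rb + 4\Delta\ell$ one has
\[
\frac{d}{q} + \frac{v}{\Delta} = \frac{r(4v + b) + 4\Delta \ell}{4\Delta r} = \frac{a^{\star}}{q^{\star}}, \qquad a^{\star} := r(4v+b)^{\star} + 4\Delta^{\star}\ell, \quad q^{\star} := 4\Delta^{\star} r,
\]
where numerator and denominator have been divided by $g_0 := (4v+b,\Delta)$; note $g_0$ is odd because $b$ is odd. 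A prime-by-prime check shows $(q^{\star}, a^{\star}) = 1$: modulo $r$ one has $a^{\star} \equiv 4\Delta^{\star}\ell$, which is coprime to $r$ since $r > 4\Delta$ is prime and $(\ell, r) = 1$; modulo $4\Delta^{\star}$ one has $a^{\star} \equiv r(4v+b)^{\star}$, and $((4v+b)^{\star}, \Delta^{\star}) = 1$ since for each prime $p$ at most one of $(4v+b)^{\star}, \Delta^{\star}$ carries a factor of $p$, while $(r, 4\Delta) = 1$ and $(4v+b)^{\star}$ is odd.

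Since $4 \mid q^{\star}$, we may complete $(a^{\star}, q^{\star})$ to a matrix $\gamma = \left(\begin{smallmatrix} a^{\star} & b^{\star} \\ q^{\star} & d^{\star}\end{smallmatrix}\right) \in \Gamma_0(4)$; as $q^{\star}$ is even and $a^{\star}$ is odd, $d^{\star}$ is odd, so $\varepsilon_{d^{\star}}$ is defined. From $\gamma z = \tfrac{a^{\star}}{q^{\star}} - \tfrac{1}{q^{\star}(q^{\star}z + d^{\star})}$ and the elementary identity $-\tfrac{1}{\beta + i/X} = \tfrac{i}{1/X - i\beta}$, solving $\gamma z' = \tfrac{a^{\star}}{q^{\star}} + \beta + \tfrac iX$ gives $z' = -\tfrac{d^{\star}}{q^{\star}} + \tfrac{i}{(q^{\star})^2(1/X - i\beta)}$, which lies in the upper half-plane. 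Reducing $-d^{\star}/q^{\star}$ modulo $4\Delta^{\star}$ and modulo $r$ via the Chinese Remainder Theorem and using $a^{\star}d^{\star} \equiv 1 \pmod{q^{\star}}$ together with $a^{\star} \equiv r(4v+b)^{\star} \pmod{4\Delta^{\star}}$ and $a^{\star} \equiv 4\Delta^{\star}\ell \pmod r$ identifies $z'$ with the argument of $g$ on the right-hand side of the Lemma. Now \eqref{eq:transformation} gives $g\big(\tfrac{a^{\star}}{q^{\star}} + \beta + \tfrac iX\big) = g(\gamma z') = \nu(\gamma)\,(q^{\star}z' + d^{\star})^{k+\frac12}\,g(z')$, and since $q^{\star}z' + d^{\star} = -\tfrac{1}{q^{\star}(\beta + i/X)} = \tfrac{X}{4\Delta^{\star}r}\cdot\tfrac{i}{1 - i\beta X}$ lies in the upper half-plane (as $q^{\star} > 0$ and $z'$ does), the automorphy factor equals $\big(\tfrac{X}{4\Delta^{\star}r}\big)^{k+\frac12}\big(\tfrac{i}{1 - i\beta X}\big)^{k+\frac12}$ with the branch fixed by the convention implicit in \eqref{eq:transformation}; here the splitting of the power is harmless because $\tfrac{X}{4\Delta^{\star}r}$ is a positive real.

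It remains to compute the multiplier $\nu(\gamma) = \overline{\varepsilon_{d^{\star}}}\big(\tfrac{q^{\star}}{d^{\star}}\big)$ and show it equals $\overline{\varepsilon_{(4v+b)^{\star}}}\,\chi_{4\Delta^{\star}}((4v+b)^{\star}r)\,\chi_r(4\Delta^{\star}\ell)$. One factors $\big(\tfrac{q^{\star}}{d^{\star}}\big) = \big(\tfrac{4\Delta^{\star}}{d^{\star}}\big)\big(\tfrac{r}{d^{\star}}\big)$ (with $\big(\tfrac{4}{d^{\star}}\big) = 1$), applies quadratic reciprocity for Shimura's quadratic symbol to each factor, and uses $d^{\star} \equiv \overline{r(4v+b)^{\star}} \pmod{4\Delta^{\star}}$ and $d^{\star} \equiv \overline{4\Delta^{\star}\ell}\pmod{r}$, together with the fact that a real Dirichlet character is invariant under inverting its argument, to produce the factors $\chi_{4\Delta^{\star}}((4v+b)^{\star}r)$ and $\chi_r(4\Delta^{\star}\ell)$; the residual sign contributions from the reciprocity laws must then be absorbed, along with $\overline{\varepsilon_{d^{\star}}}$, into $\overline{\varepsilon_{(4v+b)^{\star}}}$. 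I expect this last step — the careful bookkeeping of the reciprocity signs, the powers of $2$ hidden in the factors of $4$, and the $\varepsilon$-factors, all of which require tracking residues modulo $4$ (and occasionally $8$) against the precise definitions of Shimura's symbol and of $\varepsilon_d$ — to be the main obstacle; the remainder of the argument is a routine, if slightly lengthy, change of variables.
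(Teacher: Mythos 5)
Your setup coincides with the paper's: you merge $d/q+v/\Delta$ into $w/(4\Delta^{\star}r)$ with $w=(4v+b)^{\star}r+4\Delta^{\star}\ell$, apply \eqref{eq:transformation} to the matrix with lower row $(4\Delta^{\star}r,\overline{w})$, compute the automorphy factor $\tfrac{X}{4\Delta^{\star}r}\cdot\tfrac{i}{1-i\beta X}$, and split the new cusp $-\overline{w}/(4\Delta^{\star}r)$ by CRT; all of that matches the paper's proof and is fine. But there is a genuine gap: the identity $\nu(\gamma)=\overline{\varepsilon_{(4v+b)^{\star}}}\,\chi_{4\Delta^{\star}}((4v+b)^{\star}r)\,\chi_{r}(4\Delta^{\star}\ell)$, which is the actual content of the lemma beyond a change of variables, is not proved. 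You state that the reciprocity signs and $\varepsilon$-factors ``must then be absorbed'' into $\overline{\varepsilon_{(4v+b)^{\star}}}$ and flag this bookkeeping as the main obstacle, i.e.\ you assert the conclusion of the computation rather than carry it out. As written, one cannot tell from your argument that no extra sign (depending on $\ell$, $b$, or $\Delta^{\star}$ modulo $4$ or $8$) survives, and such a sign would ruin the later application, where the character must factor as a character in $\ell$ modulo $r$ times a quantity independent of $\ell$.

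The point you are missing is that the hypotheses were arranged precisely so that no reciprocity sign-tracking is needed at all. Since $4\mid 4\Delta^{\star}r$, Shimura's symbol $d\mapsto\bigl(\tfrac{4\Delta^{\star}r}{d}\bigr)$ is a genuine real Dirichlet character of modulus $4\Delta^{\star}r$; hence $\bigl(\tfrac{4\Delta^{\star}r}{\overline{w}}\bigr)=\bigl(\tfrac{4\Delta^{\star}r}{w}\bigr)$, and by multiplicativity in the numerator this equals $\bigl(\tfrac{4\Delta^{\star}}{w}\bigr)\bigl(\tfrac{r}{w}\bigr)$, where the first factor is a character in $w$ of modulus $4\Delta^{\star}$ (again because $4\mid 4\Delta^{\star}$) and the second is a character in $w$ of modulus $r$ because $r\equiv 1\pmod 4$. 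One then simply evaluates at $w\equiv(4v+b)^{\star}r\pmod{4\Delta^{\star}}$ and $w\equiv 4\Delta^{\star}\ell\pmod r$, with no residual signs. Likewise $\varepsilon_{d^{\star}}$ depends only on $d^{\star}\bmod 4$, and $d^{\star}\equiv\overline{w}\equiv w\equiv(4v+b)^{\star}r\equiv(4v+b)^{\star}\pmod 4$ (using $4\mid q^{\star}$, $w\equiv\pm1\pmod 4$, and $r\equiv1\pmod4$), so $\overline{\varepsilon_{d^{\star}}}=\overline{\varepsilon_{(4v+b)^{\star}}}$ exactly; nothing needs to be absorbed. Inserting these observations at the point where you compute $\nu(\gamma)$ closes the gap and recovers the paper's proof.
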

\begin{remark}
By inspection of the proof below, the conclusion of the lemma also holds when $v=0$ and we will use this later.
\end{remark}
% {\tt added this remark for $v=0$ case}
\begin{proof} Since $q = 4 \Delta r$ we can write
  $$
  \frac{d}{q} + \frac{v}{\Delta} = \frac{d + 4 v r}{4 \Delta r} .
  $$
  Notice that $(d + 4 v r, 4 \Delta r) = (d + 4 v r, \Delta)$ since $(d, 4 r) = 1$ by assumption. Therefore the above is equal to
  \begin{equation} \label{eq:fractions} 
  \frac{(4 v + b)^{\star} r + 4 \Delta^{\star} \ell}{4 \Delta^{\star} r}.
  \end{equation}
  Throughout set $w := (4 v + b)^{\star} r + 4 \Delta^{\star} \ell$. 
  We now consider
  $$
  z = - \frac{\overline{w}}{4 \Delta^{\star} r} + \frac{i}{(4 \Delta^{\star} r)^2 (\frac{1}{X} - i \beta)} 
  $$
  and the matrix
$$
\gamma = \begin{pmatrix}
  w & \star \\
  4 \Delta^{\star} r & \overline{w}
  \end{pmatrix} \in \Gamma_0(4).
  $$
  A slightly tedious computation using \eqref{eq:fractions} reveals that 
  \begin{equation} \label{equa:one}
  \gamma z = \Big ( \frac{d}{q} + \beta \Big ) + \frac{v}{\Delta} + \frac{i}{X}. 
  \end{equation}
  %this also holds if $v=0$
  We also find that
  \begin{equation} \label{equa:two}
  j_{\gamma}(z) = \frac{i}{\frac{4 \Delta^{\star} r}{X} - i (4 \Delta^{\star} r) \beta} = \frac{1}{4 \Delta^{\star} r} \cdot \frac{i X}{1 - i X \beta} 
  \end{equation}
  and that
  \begin{equation} \label{equa:three}
  \nu(\gamma) = \overline{\varepsilon_{(4 v + b)^{\star}}} \cdot \Big ( \frac{4 \Delta^{\star} r}{\overline{w}} \Big ) ,
  \end{equation}
  where $\Big ( \frac{\cdot}{\cdot} \Big )$ denotes the extended quadratic residue symbol in the sense of Shimura. In particular since $4 \Delta^{\star} r$ is divisible by four, this extended quadratic residue symbol coincides with $\chi_{4 \Delta^{\star} r}(w)$, a real Dirichlet character of modulus $4 \Delta^{\star} r$, so that $\chi_{4 \Delta^{\star} r}(\overline{w}) = \chi_{4 \Delta^{\star} r}(w)$. Moreover by multiplicativity of the Jacobi symbol
  $$
  \chi_{4 \Delta^{\star} r}(w) = \Big ( \frac{4 \Delta^{\star} r}{w} \Big ) = \Big ( \frac{4 \Delta^{\star}}{w} \Big ) \Big ( \frac{r}{w} \Big ). 
  $$
  Since $4 \Delta^{\star}$ is divisible by $4$ and $r$ is congruent to $1 \pmod{4}$ both expressions are Dirichlet characters of modulus $4 \Delta^{\star}$ and modulus $r$ respectively. In particular the above expression is equal to
  \begin{equation} \notag
  \chi_{4 \Delta^{\star}}(w) \chi_{r}(w) = \chi_{4 \Delta^{\star}} ( ( 4 v + b)^{\star} r ) \chi_{r} (\Delta^{\star}  \ell ).
  \end{equation}
Using the modularity of $g$, \eqref{eq:transformation}, and combining this with \eqref{equa:one}, \eqref{equa:two}, \eqref{equa:three} we conclude that
  \begin{align*}
    g \Big ( \frac{d}{q} + & \frac{v}{\Delta} + \beta + \frac{i}{X} \Big ) = \overline{\varepsilon_{(4 v + b)^{\star}}} \chi_{4 \Delta^{\star}} ( (4 v + b)^{\star} r ) \chi_{r} ( 4 \Delta^{\star} \ell ) \cdot \Big ( \frac{X}{4 \Delta^{\star} r} \Big )^{k+ \frac 12} \cdot \Big ( \frac{i}{1 - i X \beta} \Big )^{k + \frac 12}
                             \\ & \times g \Big ( - \frac{\overline{w}}{4 \Delta^{\star} r} + \frac{i}{(4 \Delta^{\star} r)^2 \cdot ( \frac{1}{X} - i \beta)} \Big )  .
  \end{align*}
  
  Write $(\overline{a})_{b}$ for the inverse of $a$ modulo $b$.
  Also, let $a_1=(\overline{r \cdot (4 v + b)^{\star}})_{4 \Delta^{\star}}$ and $a_2=(\overline{4 \Delta^{\star} \cdot \ell})_{r}$. Observe
  \[
  (\overline{w})_{4\Delta^{\star}r} \equiv a_1 \pmod{4\Delta^{\star}} \quad \text{ and } \quad (\overline{w})_{4\Delta^{\star}r} \equiv a_2 \pmod{r}.
  \]
  So by the Chinese Remainder Theorem
  $$(\overline{w})_{4 \Delta^{\star} r} \equiv a_1 (\overline{r})_{4 \Delta^{\star}} r + a_2 (\overline{4 \Delta^{\star}})_{r} 4 \Delta^{\star} \pmod{4 \Delta^{\star} r}.$$
  It follows that
  $$
  \frac{\overline{w}}{4 \Delta^{\star} r} \equiv \frac{\overline{r^2 (4 v + b)^{\star}}}{4 \Delta^{\star}} + \frac{\overline{16 (\Delta^{\star})^2 \ell}}{r} \pmod{1} 
  $$
  and since $g(x + i y)$ is $1$-periodic in $x$ the claim follows. \end{proof}

We are now ready to prove the main proposition of this section.

\begin{proof}[Proof of Proposition \ref{scpprop}]
Using the circle method we can re-write the sum on the LHS of \eqref{eq:mainscp} as 
\begin{equation} \notag
\frac{1}{X^{k - \frac 12}} \int_{0}^{1} g \Big ( \alpha + \frac{v}{\Delta} + \frac{i}{X} \Big ) g \Big ( - \alpha + \frac{i}{X} \Big ) e( - \alpha h) d \alpha.
\end{equation}
Let $Q = X^{1/2 + 2 \eta}$ and $\eta \in (0, 1]$ to be determined later (we will choose $\eta = \frac{1}{13}$). Let $1 \leq \Delta \leq X^{\eta / 4} \leq Q^{\eta / 2}$. 
By Lemma \ref{le:circle} the above expression is equal to 
\begin{equation} \label{eq:circled}
\frac{1}{X^{k - \frac 12}} \frac{Q^{2 - \eta}}{2 L}\sum_{q \in \mathcal{Q}} \, \, \sumstar_{d \pamod q} e \Big ( - \frac{h d}{q} \Big ) \int_{- Q^{-2 + \eta}}^{Q^{-2 + \eta}} g \Big ( \frac{d}{q} + \frac{v}{\Delta} + \beta + \frac{i}{X} \Big ) g \Big ( - \frac{d}{q} - \beta + \frac{i}{X} \Big ) e ( - \beta h) d \beta.
\end{equation}
plus an error term of size $\ll X Q^{-\eta/4} \leq X^{1 - \eta / 8}$, where in the estimation of the error term we have used the bound
$
|g(\alpha+i/X)| \ll X^{\frac{k+\frac12}{2}}
$
which holds
uniformly for all $\alpha \in \mathbb R$. This
 follows from the fact that $y^{(k+\frac12)/2}|g(z)|$ is bounded
on $\mathbb H$ since $g$ is a cusp form.

We now write $q = 4 \Delta r$ with $r$ a prime congruent to $1 \pmod{4}$ and just as in Lemma \ref{le:modularity} we write, $d = (4 \Delta) \ell + r b$ with $(\ell, r) = 1$ and $(b, 4 \Delta) = 1$. Then by Lemma \ref{le:modularity} and the triangle inequality, \eqref{eq:circled} is in absolute value less than 
\begin{equation} \label{eq:mainscp2}
 \leq X \cdot \frac{Q^{2 - \eta}}{2 L} \sumstar_{b \pamod{4\Delta}} \sum_{\substack{Q \leq 4 \Delta r \leq 2Q \\ r \equiv 1 \pmod{4} \text{ prime}}} \Big ( \frac{\sqrt{X}}{4 \Delta^{\star} r} \Big )^{k + \frac 12} \cdot \Big ( \frac{\sqrt{X}}{4 \Delta r} \Big )^{k + \frac 12} \cdot S,
\end{equation}
where
\begin{align} \label{eq:Sdef}
  S = 
  \Big | & \sumstar_{\ell \pamod{r}} e \Big ( - \frac{h \ell}{r} \Big ) \int_{-Q^{-2 + \eta}}^{Q^{-2 + \eta}} g \Big ( - \frac{\overline{r^2 (4 v + b)^{\star}}}{4 \Delta^{\star}} - \frac{\overline{16 (\Delta^{\star})^2 \ell}}{r} + \frac{i}{(4 \Delta^{\star} r)^2 (\frac{1}{X} - i \beta)} \Big ) \\ & \times g \Big ( \frac{\overline{r^2 b}}{4 \Delta} + \frac{\overline{16 \Delta^2 \ell}}{r} + \frac{i}{(4 \Delta r)^2 (\frac{1}{X} + i \beta)}  \Big ) \cdot \frac{e(-h \beta) d \beta}{|1 - i \beta X|^{2k + 1}} \Big |. \notag
\end{align}
Notice now that for any $q \geq 1$, 
\begin{equation} \notag
\frac{i}{q^2 (\frac{1}{X} - i \beta)} = \frac{i X}{q^2 \cdot |1 - i \beta X|^2} - \frac{X^2 \beta}{q^2 |1 - i \beta X|^2}.
\end{equation}
Moreover since $|\beta X| \leq Q^{-2 + \eta} X \leq X^{-\eta}$ the imaginary part of the above expression is $(1 + o(1)) X / q^2$. Therefore expanding in Fourier coefficients we can write, for any $\varepsilon > 0$, 
\begin{align} \label{eq:firstexpansiong}
 g\Big ( - \frac{\overline{r^2 (4 v + b)^{\star}}}{4 \Delta^{\star}} & - \frac{\overline{16 (\Delta^{\star})^2 \ell}}{r} + \frac{i}{(4 \Delta^{\star} r)^2 (\frac{1}{X} - i \beta)} \Big ) \\ & = \sum_{n \leq X^{\varepsilon} (4 \Delta^{\star} r)^2 / X} \alpha(n;\beta) n^{\frac{k - \frac 12}{2}} e \Big ( - \frac{n \overline{16 \Delta^{\star} \ell}}{r} \Big ) + O(X^{-A})  \notag
\end{align}
for any $A \ge 1$,
where the coefficients $\alpha(n;\beta)$ are independent of $\ell$ and $|\alpha(n;\beta)| \ll |c(n)| \ll_{\varepsilon} n^{1/2 + \varepsilon}$ for all $\varepsilon > 0$ and $\beta \in \mathbb{R}$ by \eqref{eq:Fourierbd}. Similarly we have for every $\varepsilon > 0$
\begin{align} \label{eq:secondexpansiong}
  g \Big ( \frac{\overline{r^2 b}}{4 \Delta} & + \frac{\overline{16 \Delta^2 \ell}}{r} + \frac{i}{(4 \Delta r)^2 (\frac{1}{X} + i \beta)}  \Big )
  = \sum_{m \leq X^{\varepsilon} (4 \Delta r)^2 / X} \gamma(m;\beta) m^{\frac{k - \frac 12}{2}} e \Big ( \frac{m \overline{16 \Delta^2 \ell}}{r} \Big ) + O(X^{-A})
\end{align}
for any $A \ge 1$,
where the coefficients $\gamma(m;\beta)$ are independent of $\ell$ and $|\gamma(m;\beta)| \ll |c(n)| \ll_{\varepsilon} m^{1/2 + \varepsilon}$ for all $\varepsilon > 0$ and $\beta \in \mathbb{R}$.

Applying  \eqref{eq:firstexpansiong} and \eqref{eq:secondexpansiong}  in \eqref{eq:Sdef}  it follows that
\begin{align*}
S \ll  \int_{-Q^{- 2 + \eta}}^{Q^{-2 + \eta}} \sum_{\substack{m \leq X^{\varepsilon} (4 \Delta r)^2 / X \\ n \leq X^{\varepsilon} (4 \Delta^{\star} r)^2 / X}} \Big | \alpha(n; \beta) \gamma(m; \beta) (m n)^{\frac{k - \frac 12}{2}} S(m \overline{16 \Delta^2}  - n \overline{16 (\Delta^{\star})^2}, -h; r) \Big | d \beta + X^{-100},
\end{align*}
where $S(a,b;c)$ is the standard Kloosterman sum.
Since $0 < |h| < r$ and $r$ is prime the Weil bound shows that the Kloosterman sum above is always $\ll \sqrt{r} \ll \sqrt{ \frac{Q}{\Delta}} $. Therefore, applying Cauchy-Schwarz along with \eqref{eq:Parseval} in the above equation it follows that
\[
\begin{split}
    \ll X^{\varepsilon} \cdot Q^{-2+\eta}  \cdot \left( \frac{(4 \Delta r)^2 }{X}\right)^{\frac{k+\frac32}{2}} \cdot \left( \frac{(4 \Delta r)^2 }{X}\right)^{\frac{k+\frac32}{2}} \cdot \sqrt{ \frac{Q}{\Delta}} .
\end{split}
\]
Also, recall that $Q=X^{\frac12+2\eta}$, $\Delta \le X^{\eta/4}$  and $L \gg \frac{Q^2}{\Delta X^{\varepsilon}}$.
We conclude that \eqref{eq:mainscp2} is
$$
\ll \frac{X^{1+\varepsilon}}{L} \cdot \Delta \cdot \frac{Q}{\Delta}  \cdot \frac{Q^2}{X} \cdot \sqrt{\frac{Q}{\Delta}} \ll X^{\frac34+\frac{25 \eta}{8}+\varepsilon}.
$$
We also recall that when using Lemma \ref{le:circle} in \eqref{eq:circled} we introduced an error of size $X^{1 - \eta / 8}$. Picking $\eta = \frac{1}{13}$ gives a total error of size $\ll X^{1 - \frac{1}{104} + \varepsilon}$ and allows $\Delta$ to go up to $X^{\frac{1}{52}}$.
\end{proof}

\subsection{Proof of Proposition \ref{mainprop}}

We are now ready to prove Proposition \ref{mainprop}. Let $h$ be the indicator function of the integers that can be written as $8 n$ with $n$ odd and square-free. We find that
$$
h(n) = \sum_{\substack{2^{\alpha + 3} d^2  | n \\ d \text{ odd}}} \mu(d) \mu(2^{\alpha}).
$$
Let
$$
h_{\leq Y}(n) = \sum_{\substack{n = 2^{\alpha + 3} d^2 m \\ d \leq Y, \alpha \leq 1 \\ d \text{ odd}}} \mu(d) \mu(2^{\alpha}) \ , \ h_{> Y}(n) = \sum_{\substack{n = 2^{\alpha + 3} d^2 m \\ d > Y, \alpha \leq 1 \\ d \text{ odd}}} \mu(d) \mu(2^{\alpha}). 
$$
By the triangle inequality, 
\begin{align*}
  \sum_{X \leq x \leq 2X} \Big | \sum_{x \leq n \leq x + y} c(n) M((-1)^k n) h_{>Y }(n) \Big | \leq y \sum_{n \leq 4 X} |c(n) M((-1)^k n) h_{> Y}(n)| 
\end{align*}
and by the definition of $h_{> Y}(n)$ this is
\begin{equation} \label{eq:easy}
\ll y \sum_{\substack{d > Y \\ d \text{ odd}}} \sum_{\substack{n \leq 4 X \\ d^2 | n}} |c(n) M((-1)^k n)| .
\end{equation}
A trivial bound gives $M((-1)^k n) \ll_{\varepsilon} M X^{\varepsilon}$.
By Shimura's result \eqref{eq:shimura} and Deligne's bound,
$
|c(d^2 n)| \ll_{\varepsilon} d^{\varepsilon} |c(n)|
$
for all $\varepsilon > 0$. 
Hence, we conclude after an application of Cauchy-Schwarz and \eqref{eq:Parseval} that
$$
\sum_{\substack{n \leq X \\ d^2 | n}} |c(n)| \ll_{\varepsilon} d^{\varepsilon} \sum_{n \leq X / d^2} |c(n)| \ll_{\varepsilon} d^{\varepsilon} \cdot \Big \lfloor\frac{X}{d^2} \Big \rfloor.
$$
Therefore \eqref{eq:easy} is bounded by
$$
\ll_{\varepsilon} y \Big ( \frac{X}{Y} + 1 \Big ) M X^{\varepsilon}.
$$
This contributes $\ll X^{1 - \delta + \varepsilon}$ provided that $Y \geq X^{\delta} M y$. 

Therefore after an application of Cauchy-Schwarz, it remains to obtain a non-trivial upper bound for
$$
\sum_{X \leq x \leq 2 X} \Big | \sum_{x \leq n \le x + y} c(n) M((-1)^k n) h_{\leq Y}(n) \Big |^2 .
$$
We introduce an auxiliary smoothing, and bound the above by
$$
\ll \sum_{x \geq 1} \Big | \sum_{x \leq n \le x + y} c(n) M((-1)^k n) h_{\leq Y}(n) \Big |^2 \exp \Big ( - 4 \pi \frac{x}{X} \Big ) \Big ( \frac{x}{X} \Big )^{k - \frac 12}. 
$$
We note that the implicit constant is allowed to depend on the weight $k + \frac 12$. 
Let $\alpha(n) = c(n) M((-1)^k n) h_{\leq Y}(n)$.
Expanding the square we re-write the above expression as
$$
\frac{1}{X^{k - \frac 12}} \sum_{0 \leq h_1, h_2 \leq y} \sum_{n \ge 1} \alpha(n + h_1) \alpha(n + h_2) n^{k - \frac 12 } \exp \Big ( - \frac{4\pi n}{X} \Big ). 
$$
Grouping terms together and using a Taylor expansion we can re-wite the above as
\begin{equation} \label{eq:allterms}
\begin{split}
&\frac{1}{X^{k - \frac 12}} \sum_{|h| \leq y} \sum_{n \ge 1} \alpha(n) n^{\frac{k - \frac 12}{2}} e^{- \frac{2\pi n}{X}} \alpha(n + h) ( n + h)^{\frac{k - \frac 12}{2}} e^{- \frac{2\pi (n + h)}{X}} \sum_{\substack{0 \le h_1,h_2 \le y \\ h_1-h_2=h}} \left(1+O\left( \frac{y}{n}+\frac{y}{X} \right) \right) \\
&=\frac{1}{X^{k - \frac 12}} \sum_{|h| \leq y} (y + 1 - |h|) \sum_{n \ge 1} \alpha(n) n^{\frac{k - \frac 12}{2}} e^{- \frac{2\pi n}{X}} \alpha(n + h) ( n + h)^{\frac{k - \frac 12}{2}} e^{- \frac{2\pi (n + h)}{X}} +O\left(y^3 M^2 X^{\varepsilon} \right)
\end{split}
\end{equation}
where we used \eqref{eq:Parseval} in the estimation of the error term. The term $h = 0$ contributes
\begin{equation} \label{eq:diagonal}
\ll \frac{y}{X^{k - \frac 12}} \sum_{n \ge 1} |c(n) h_{\leq Y}(n) M((-1)^k n)|^2 \cdot n^{k - \frac 12} e^{- \frac{4\pi n}{X} }. 
\end{equation}
Repeating the same argument as before we see that
$$
y \cdot \frac{1}{X^{k - \frac 12}} \sum_{n \ge 1} |c(n) h_{> Y}(n) M((-1)^k n)|^2 \cdot n^{k - \frac 12} e^{- \frac{4\pi n}{X}} \ll_{\varepsilon}  y \Big  ( \frac{X}{Y} + 1 \Big ) X^{\varepsilon} M^2.
$$
This gives a total contribution of $\ll X^{1 - \delta +\varepsilon}$ provided that $Y > X^{\delta} M^2 y$. 
It follows that \eqref{eq:diagonal} is bounded by
$$
\ll y\cdot \frac{1}{X^{k - \frac 12}} \sum_{\substack{n \text{ odd}}} \mu^2(n) |c(8 n) M((-1)^k 8 n)|^2 \cdot n^{k - \frac 12} e^{-\frac{4\pi n}{X}}+X^{1-\delta+\varepsilon}
$$
as needed. 

We now focus on the terms with $h\neq 0$ in \eqref{eq:allterms}. Opening $h_{\leq Y}$ and $M((-1)^k n)$ we see that the RHS of \eqref{eq:allterms} restricted to $h \neq 0$ is bounded by
\begin{align*}
& \ll_{\varepsilon} y^2 M^2 Y^2 X^{\varepsilon} \cdot \frac{1}{X^{k - \frac 12}}  \\ & \times  \sup_{\substack{0 < |h| \leq y \\ d_1, d_2 \leq Y \\ \alpha_1, \alpha_2 \in \{0, 1\} \\ m_1, m_2 \leq M}} \Big | 
\sum_{\substack{d_1^2 2^{\alpha_1 + 3} | n \\ d_2^2 2^{\alpha_2 + 3} | n + h}}  c(n) \chi_{(-1)^k n}(m_1) n^{\frac{k - \frac 12}{2}} e^{- \frac{2\pi n }{X}} c(n + h) \chi_{(-1)^k (n + h)}(m_2) (n + h)^{\frac{k - \frac 12}{2}} e^{- \frac{2\pi (n + h)}{X}} \Big | .
\end{align*}
By the Chinese
Remainder Theorem the condition $2^{\alpha_1 + 3} d_1^2 | n$ and $2^{\alpha_2 + 3} d_2^2 | n + h$ can be re-written as a single congruence condition to a modulus of size $\le 4 Y^4$. Moreover $\chi_{(-1)^k n}(m_1)$ is $4m_1$-periodic in $n$, where-as $\chi_{(-1)^k (n + h)}(m_2)$ is $4m_2$-periodic in $n$. Therefore fixing the congruence class of $n$ modulo $4 [m_1, m_2]$ fixes the value of $\chi_{(-1)^k n}(m_1) \chi_{(-1)^k (n +h)}(m_2)$. Therefore we can bound the above supremum by
\begin{equation} \label{eq:almostfinal}
\sup_{\substack{0 < |h| \leq y  \\ 1 \leq \Delta \leq 4 Y^4 M^2 \\ \gamma \pmod{\Delta}}} \Big | \sum_{n \equiv \gamma \pmod{\Delta}} c(n) n^{\frac{k - \frac 12}{2}} e^{- \frac{2\pi n}{X} } c(n + h) (n + h)^{\frac{k - \frac 12}{2}} e^{- \frac{2\pi (n + h)}{X}} \Big | .
\end{equation}
Finally, we can write
$$
\mathbf{1}_{n \equiv \gamma \pmod{\Delta}} = \frac{1}{\Delta} \sum_{0 \leq v < \Delta} e \Big ( \frac{v n}{\Delta} \Big ) e \Big ( - \frac{v \gamma}{\Delta} \Big ).  
$$
Plugging this into \eqref{eq:almostfinal} we see that it is
$$
\leq \sup_{\substack{0 < |h| \leq y  \\ 1 \leq \Delta \leq 4 Y^4 M^2 \\ v \pmod{\Delta}}} \Big | \sum_{n \ge 1} c(n) e \Big ( \frac{n v}{\Delta} \Big ) n^{\frac{k - \frac 12}{2}} e^{- \frac{2\pi n}{X} } c(n + h) (n + h)^{\frac{k - \frac 12}{2}} e^{- \frac{2\pi (n + h)}{X}} \Big | 
$$
and without loss of generality we can also assume that $(v, \Delta) = 1$.
It follows that the RHS of \eqref{eq:allterms} restricted to $h \neq 0$ is bounded by
$$
\ll_{\varepsilon} X^{\varepsilon} \cdot \frac{y^2 M^2 Y^2}{X^{k - \frac 12}} \sup_{\substack{0 < |h| \leq y  \\ 1 \leq \Delta \leq 4 Y^4 M^2 \\ (v, \Delta) = 1}} \Big | \sum_{n \ge 1} c(n) e \Big ( \frac{n v}{\Delta} \Big ) n^{\frac{k - \frac 12}{2}} e^{- \frac{2\pi n}{X} } c(n + h) (n + h)^{\frac{k - \frac 12}{2}} e^{- \frac{2\pi (n + h)}{X}} \Big |.
$$

According to Proposition \ref{scpprop} the above expression is $\ll_{\varepsilon} y^2 M^2 Y^2 X^{1 - \frac{1}{104} + \varepsilon}$ provided that $4 Y^4 M^2 \leq X^{\frac{1}{52}}$. Moreover we introduced earlier error terms that were $\ll_{\varepsilon} X^{1 - \delta / 2 + \varepsilon}$ as long as $Y \geq y X^{\delta} M$. Choosing $Y = y X^{\delta} M$ we obtain the restriction $y^4 X^{4 \delta} M^6 \leq X^{\frac{1}{52}}$. We also decide to require that $y^2 M^2 Y^2 X^{1 - \frac{1}{104}} \leq X^{1 - \delta / 2}$ which gives $y^2 M^2 Y^2 X^{\delta / 2} \leq X^{\frac{1}{104}}$ and in particular $y^4 X^{5 \delta / 2} M^4 \leq X^{\frac{1}{104}}$. We posit (somewhat arbitrarily) that we will require $y, M \leq X^{\delta}$. In that case the previous two conditions are verified if $X^{21 \delta / 2} \leq X^{\frac{1}{104}}$ and if $X^{14 \delta} \leq X^{\frac{1}{52}}$. This leads to the choice of $\delta = \frac{1}{1092}$, implying the restriction $y, M \leq X^{\frac{1}{1092}}$ and giving an error term of $\ll X^{1 - \frac{1}{2148}}$. 

% so that it's enough to guarantee $X^{10 \delta} \leq X^{\frac{1}{58}}$. 

%Therefore choosing $\delta = \frac{1}{580}$ leads to an admissible choice, with the restrictions that $y, M \leq X^{\frac{1}{580}}$ and a saving of 

%We pick $M = X^{\frac{1}{9984}}$ so that it's enough to require $y^4 X^{4 \delta} \leq X^{\frac{1}{832}}$. We then choose $y \leq X^{\frac{1}{13312}}$ and $\delta = \frac{1}{13312}$.
%This gives us a final error term of size $\ll y^4 X^{2\delta} M^3 X^{1 - \frac{1}{832} + \varepsilon} + X^{1 - \delta / 2} \ll $
%This gives us a final error term of size $\ll X^{1 - \frac{3}{6656}} + X^{1 - \frac{1}{26624}} \ll X^{1 - \frac{1}{26624}}$. 

\section{Appendix}
\subsection{The structure of the space of half-integral weight forms}
Due to recent work of Baruch-Purkait \cite{BP}, Shimura's correspondence between half-integral weight forms and integral weight forms is better understood. 
Let $A_{k+1/2}^{+}$ denote the \textit{conjugate plus space}, which is defined as $W_4 S_{k+1/2}^+$, where $W_4: S_{k+1/2} \rightarrow S_{k+1/2}$ is given by
\[
(W_4 f)(z)=(-2iz)^{-k-1/2}f\left( \frac{-1}{4z}\right).
\]
% {\tt this differs by a factor of $\left( \frac{2}{2k+1}\right)$ from $\widetilde W_4$ as given in Proposition 4.6 of Baruch and Purkait, but I don't think this matters here (see also the comment on page 29 before section 5)}
Baruch-Purkait proved that $A_{k+1/2}^{+} \bigcap S_{k+1/2}^{+} =\{0\}$. Letting $S_{k+1/2}^{-}$ denote the orthogonal complement of $A_{k+1/2}^{+} \oplus S_{k+1/2}^+$ (w.r.t. the Petersson inner product) they proved that Niwa's isomorphism (see the main theorem in \cite{Niwa}) induces a Hecke algebra isomorphism between $S_{k+1/2}^{-} $ and $S_{2k}^{\text{new}}(2)$ (the space weight $2k$ newforms on $\Gamma_0(2)$). 
% Consequently, by Proposition 4.1 of Kumar and Purkait \cite{KP} (see also Corollary 2.2) it follows that if $g \in S_{k+1/2}^{-}$ then $g$ can be normalized so that it has real, algebraic Fourier coefficients. 

Also, let $U_{m}$ be the operator, which acts on power series as follows
\[
U_{m} \left( \sum_{n \ge 1} a_n q^n \right)= \sum_{n \ge 1} a_{mn} q^n.
\]
Niwa proved that $U_4W_4$ is Hermitian on $S_{k+1/2}$ and that $(U_4W_4-\alpha_1)(U_4W_4-\alpha_2)=0$ where $\alpha_1=2^{k} \left( \frac{2}{2k+1}\right)$ and $\alpha_2=-\frac12 \cdot \alpha_1$. The Kohnen plus space $S_{k+1/2}^+$ is the subspace of cusp forms in $S_{k+1/2}$ with $U_4W_4$-eigenvalue equal to $\alpha_1$ (Kohnen \cite[Proposition 2]{KohnenMathAnn}). 

% Also, recall \cite[Theorem 1]{KohnenMathAnn}
% that for $g \in S_{k+1/2}^+$, $f$ the corresponding level $1$ Hecke cusp form and a fundamental discriminant $d$ with $(-1)^k d>0$
%  \begin{equation} \label{eq:shimura}
%  c(n^2|d|)=c(|d|) \sum_{r|n} \frac{\mu(r) \left(\frac{d}{r} \right)}{\sqrt{r}}\lambda_f\left( \frac{n}{r}\right),
%  \end{equation}
%  for any $n \in \mathbb N$.

% The analog of multiplicity one fails for half-integral weight forms and using this we can give an example of a Hecke eigenform in $S_{k+1/2}$ which cannot be normalized so that its Fourier coefficients are real. To see why this might be expected, recall that Niwa proved $S_{k+1/2}(4)$ and $S_{2k}(2)$ are isomorphic as modules over the Hecke algebra. Since $S_{12}(2)$ contains the old form $\Delta(2z)$, the Hecke eigenfunction
% \[
% \Delta+i U_2 \Delta \in S_{12}(2)
% \]
% but cannot be normalized so that it has real coefficients.

% Let $U_{m}$ be the operator, which acts on power series as follows
% \[
% U_{m} \left( \sum_{n \ge 1} a_n q^n \right)= \sum_{n \ge 1} a_{mn} q^n.
% \]
% It is known that $U_4: S_{k+1/2} \rightarrow S_{k+1/2}$ {\tt need a reference, maybe Atkin-Lehner..}.
% Also define the operator $W_4 : S_{k+1/2} \rightarrow S_{k+1/2}$
% \[
% (W_4 f)(z)=(-2iz)^{-k-1/2} f\left( \frac{-1}{4z}\right).
% \]
% Niwa proved that $U_4 W_4: S_{k+1/2} \rightarrow S_{k+1/2}$ is Hermitian and that
% $(U_4W_4-\alpha_1)(U_4W_4-\alpha_2)=0$ where $\alpha_1=\left( \frac{2}{2k+1}\right) 2^k$ and $\alpha_2=-\frac12 \alpha_1$. Also, $S_{k+1/2}^+$ equals the $\alpha_1$-eigenspace of $U_4W_4$. 

\subsection{Results}
Recall that $\mathbb N^{\flat}=\{ n \in \mathbb N : n=8m \text{ and } \mu^2(2m)=1\}$ and $\mathbb N_g^{\flat}(X)= \{  n \le X : n \in \mathbb N^{\flat} \text{ and } c(n) \neq 0 \}$. Also, let $S_{2k}(N)$ denote the space of weight $2k$ cusp forms on $\Gamma_0(N)$.

% \begin{hypothesis} \label{hyp:1}
% Let $\alpha,\beta \in \mathbb C$ be non-zero and $h \in S_{k+1/2}^+$ be a Hecke cusp form. Then 
% \[
% g \neq \alpha \cdot h +\beta \cdot (W_4 h).
% \]
% \end{hypothesis}
\begin{theorem} \label{thm:linearalgebra}
Let $k \ge 2$ be an integer and $g$ be a Hecke cusp form of weight $k+\tfrac12$ on $\Gamma_0(4)$. Then it is possible to normalize $g$ so that its $nth$ Fourier coefficient is real for $n \in \mathbb N^{b}$. 

In addition, suppose that $c(n) \neq 0$ for some $n \in \mathbb N^{\flat}$. Then for every $\varepsilon>0$ the sequence $\{ c(n) \}_{n \in \mathbb N_g^{\flat}(X)}$ has $\gg X^{1-\varepsilon}$ sign changes. Assuming GRH the sequence $\{ c(n) \}_{n \in \mathbb N_g^{\flat}(X)}$ has $\gg  X$ sign changes.

\end{theorem}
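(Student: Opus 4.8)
The plan is to use the structure theory of Baruch--Purkait \cite{BP} to reduce to Theorems \ref{thm:sign} and \ref{thm:sign2}. Recall that $S_{k+1/2}(\Gamma_0(4)) = S_{k+1/2}^+ \oplus A_{k+1/2}^+ \oplus S_{k+1/2}^-$ as Hecke modules (for the $T_{p^2}$, $p$ odd), with $A_{k+1/2}^+ = W_4 S_{k+1/2}^+$ and with $S_{k+1/2}^+$ (resp.\ $S_{k+1/2}^-$) Hecke-isomorphic to $S_{2k}(1)$ (resp.\ $S_{2k}^{\mathrm{new}}(2)$); moreover $W_4$ commutes with the $T_{p^2}$ for odd $p$, so it also identifies $A_{k+1/2}^+$ with $S_{2k}(1)$. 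By strong multiplicity one the Hecke eigenvalue system of a Hecke cusp form $g$ is therefore either that of a level $1$ newform $f$ of weight $2k$, in which case $g = c_1 g^+ + c_2 W_4 g^+$ for the Hecke eigenform $g^+ \in S_{k+1/2}^+$ lifting $f$ (the eigenspace is two-dimensional and meets $S_{k+1/2}^-$ trivially), or that of a level $2$ newform $f$, in which case $g$ is a scalar multiple of the Hecke eigenform $g^- \in S_{k+1/2}^-$ lifting $f$.

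Suppose first that $g$ lifts a level $1$ form. The point is that for odd $n$ the coefficient of $W_4 g^+$ at $8n$ vanishes: since $S_{k+1/2}^+$ is the $\alpha_1$-eigenspace of $U_4 W_4$ we have $U_4(W_4 g^+) = \alpha_1 g^+$, so the coefficient of $W_4 g^+$ at $8n = 4 \cdot (2n)$ equals $\alpha_1$ times the coefficient of $g^+$ at $2n$, and the latter is zero because $2n \equiv 2 \pmod{4}$ and $g^+$ lies in the plus space. Hence $c(8n) = c_1\, c_{g^+}(8n)$ for every odd $n$, and the hypothesis that $c$ does not vanish identically on $\mathbb N^{\flat}$ forces $c_1 \neq 0$. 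Normalizing $g^+$ so that its Fourier coefficients at $\mathbb N^{\flat}$ are real (\cite{KZ, KP}) and then rescaling $g$ so that $c_1 \in \mathbb R$, all $c(n)$ with $n \in \mathbb N^{\flat}$ are real, $\mathbb N^{\flat}_g(X) = \mathbb N^{\flat}_{g^+}(X)$, and the sign changes of $\{c(n)\}_{n \in \mathbb N^{\flat}_g(X)}$ coincide with those of $\{c_{g^+}(n)\}_{n \in \mathbb N^{\flat}_{g^+}(X)}$. Theorems \ref{thm:sign} and \ref{thm:sign2} applied to $g^+$ then give both assertions.

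Now suppose $g$ lifts a level $2$ newform $f$, so that $g$ is proportional to $g^- \in S_{k+1/2}^-$; this is the $N = 1$ case of the general level-$4N$ extension mentioned in the introduction, and one re-runs the argument of Sections 2--6 with $f$ in place of the level $1$ Shimura lift. Proposition \ref{mainprop} is already stated for an arbitrary cusp form of weight $k+\tfrac12$ on $\Gamma_0(4)$, so it applies to $g^-$ unchanged. Proposition \ref{prop:moments} (hence Proposition \ref{prop:secondmoment}) holds verbatim for $f$ of level $2$, its proof relying only on Chandee's conditional bound \cite{Phi} for $\log |L(\tfrac12, f \otimes \chi_d)|$, Soundararajan's moment method, and Heath-Brown's mean-square bound \cite{HB}, all valid in level $2$. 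Proposition \ref{prop:twisted} (hence Proposition \ref{prop:firstmoment}) goes through with only routine modifications at the prime $2$: the generalized Kohnen--Zagier/Waldspurger formula now reads $|c(8n)|^2 = \kappa_g\, \vartheta_g(n)\, L(\tfrac12, f \otimes \chi_{(-1)^k 8n})$ for odd square-free $n$, with $\kappa_g > 0$ and $\vartheta_g$ bounded, nonnegative and multiplicative; this extra weight is folded into the mollifier expansion exactly like the factor $\vartheta(u)$ already present there, and the relevant Euler products remain positive as in Section 6. Since $c$ does not vanish identically on $\mathbb N^{\flat}$, $\vartheta_g \not\equiv 0$; discarding the finitely many primes at which $\vartheta_g$ vanishes restricts $n$ to a positive-density subset of $\mathbb N^{\flat}$, on which the deductions of Sections 2--3 yield $\gg X$ sign changes under GRH and $\gg X^{1-\varepsilon}$ unconditionally. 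Reality of the coefficients follows from the level-$4N$ analogue of the algebraicity results used in the level $1$ case.

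The linear-algebraic reduction in the level $1$ case is routine; the substantive work, and the main obstacle, is the level $2$ case --- verifying that the moment machinery of Sections 4--6 survives both the change of level of the Shimura lift and the insertion of the arithmetic weight $\vartheta_g$, and pinning down the generalized Waldspurger formula together with the nonnegativity and non-vanishing of $\vartheta_g$.
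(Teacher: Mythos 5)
Your overall route is the same as the paper's: decompose according to whether the Niwa/Shimura lift of $g$ is old (coming from level $1$) or new of level $2$, reduce the oldform case to Theorems \ref{thm:sign} and \ref{thm:sign2} through the plus space, and rerun the moment machinery in the newform case. There is, however, a concrete discrepancy in your oldform case. You claim that the $8n$-th coefficient of $W_4 g^+$ vanishes for odd $n$, deducing this from $U_4(W_4 g^+)=\alpha_1 g^+$, i.e.\ reading ``$S_{k+1/2}^+$ is the $\alpha_1$-eigenspace of $U_4W_4$'' with $W_4$ applied first. The paper instead works from $U_4 g^+=\alpha_2 W_4 g^+$, so that the coefficient of $W_4 g^+$ at $8n$ is $\alpha_2^{-1}\lambda_F(2)$ times the coefficient of $g^+$ at $8n$ (via \eqref{eq:shimura} with $\chi_d(2)=0$); accordingly the degenerate combinations are those with $a/b=-\lambda_F(2)/\alpha_2$ (see \eqref{eq:deggenerate} and the Remark following the theorem), not $c_1=0$ as in your version. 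The two identities cannot both hold: together they would force $a_{g^+}(16m)=\alpha_1\alpha_2\,a_{g^+}(m)=-2^{2k-1}a_{g^+}(m)$ for all $m$, which the Shimura relation \eqref{eq:shimura} at the prime $2$ rules out whenever $\lambda_F(2)\neq 0$ (e.g.\ for the lift of $\Delta$). So at most one of the two computations is correct, and which one depends on the operator-order and normalization conventions implicit in Niwa's theorem and Kohnen's Proposition 2; you must verify your identity against those sources with the paper's normalization of $W_4$ rather than infer it from the phrase ``$\alpha_1$-eigenvalue of $U_4W_4$'', whose composition order is ambiguous. The damage is limited: with either identity one gets $c(8n)=\kappa\,c_{g^+}(8n)$ on $\mathbb N^{\flat}$ for a scalar $\kappa$ depending linearly on $(c_1,c_2)$, and the hypothesis that $c$ does not vanish identically on $\mathbb N^{\flat}$ forces $\kappa\neq 0$, so the reduction to Theorems \ref{thm:sign} and \ref{thm:sign2} survives; what changes is only the identification of the degenerate combinations, but as written your key intermediate claim is unjustified and conflicts with the paper.

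In the minus-space case your sketch is thinner than what the paper does. The paper introduces no auxiliary weight $\vartheta_g$: it quotes Shimura's explicit Waldspurger formula (Theorem 3B.4 of \cite{Shimura2}), which at discriminants $(-1)^k 8n$ with $2n$ squarefree gives $|c(2n)|^2=L(\tfrac12,f\otimes\chi_d)\,\frac{(k-1)!}{2\pi^k}\,\frac{\langle g,g\rangle}{\langle f,f\rangle}$ with a clean positive constant, gets reality of the coefficients from Theorem 3B.5 there, and uses $c(8n)=\pm\,c(2n)$ (via Niwa's isomorphism and Atkin--Lehner, as in \cite{BP}); the moment machinery is then run with the level-$2$ newform $f$, the modifications to Propositions \ref{prop:moments}, \ref{prop:twisted} and \ref{mainprop} being exactly those spelled out for the level-$4N$ theorem in the Appendix. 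If you keep your formulation with a multiplicative weight $\vartheta_g$, you would still owe proofs of its nonnegativity, of a lower bound on the chosen subsequence, and of its compatibility with the mollified first and second moments --- precisely what the appeal to Shimura's formula makes unnecessary.
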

\begin{remark}
We will see that if $g$ is of the form
\begin{equation} \label{eq:deggenerate}
g(z)=aG(z)+b (W_4 G)(z)
\end{equation}
with $G \in S_{k+1/2}^+$, and $a,b \in \mathbb C$ with $\frac{a}{b}=- \frac{1}{\alpha_2} \lambda_F(2)$, where $F \in S_{2k}(1)$ is the Shimura lift of $G$, then $c(8n) \mu^2(2n)=0$ for each $n \in \mathbb N$. Otherwise, for a Hecke cusp form $g \in S_{k+1/2}$ not of this form, the proof below and Lemma \ref{le:elementary} imply $c(8n) \mu^2(2n) \neq 0$ for some $n$ so that the conclusion of Theorem \ref{thm:linearalgebra} holds for $g$.

Moreover, for $g$ as in \eqref{eq:deggenerate} with $b \neq 0$ the subsequent argument shows that $c(2n) \mu^2(2n)=\frac{b}{\alpha_2} c_G(8n) \mu^2(2n)$, where $c_G(n)$ denotes the $n$th Fourier coefficient of $G$. Hence in this case we conclude that after suitable normalization, the sequence $\{ c(2n)\mu^2(2n)\}_{n \in \mathbb N}$ has $\gg X$ sign changes as $n$ ranges over integers in $[1,X]$ under GRH and $\gg X^{1-\varepsilon}$ such sign changes unconditionally.
\end{remark}
% {\tt added a comment about the degenerate case which we also can handle}

% \begin{remark} 
% The proof below shows that if $c(n) \neq 0$  for one $n \in \mathbb N^{\flat}$ then the sequence $\{ c(n) \}_{n \in \mathbb N_g^{\flat}(X)}$ has $\gg X^{1-\varepsilon}$ sign changes and under GRH at least $\gg X$ sign changes.
% \end{remark}

\begin{proof}
Niwa (see the main theorem of \cite{Niwa}) proved that there is a Hecke algebra isomorphism between $S_{k+1/2}$ and $S_{2k}(2)$.
% are isomorphic as modules over the Hecke algebra. 
Denoting Niwa's isomorphism by $\psi$, write $f=\psi(g) \in S_{2k}(2)$.
By Atkin-Lehner \cite[Theorem 5]{AtkinLehner} either $f\in S_{2k}^{\text{new}}(2)$ or  $f$ is an oldform, i.e. $f(z) =\alpha F(z) + \beta F(2z)$ with $F \in S_{2k}(1)$, $\alpha, \beta \in \mathbb C$.

 We first consider the case $\psi(g) =f \in S_{2k}^{\text{new}}(2)$. Here we can apply Shimura's explicit version of Waldspurger's Theorem, which for a fundamental discriminant of the form $(-1)^kd= 8n>0$ with $2n$ square-free gives
\begin{equation} \notag
|c(2n)|^2= L(\tfrac12, f\otimes \chi_{d}) \frac{(k-1)!}{2 \pi^k} \cdot \frac{\langle g,g \rangle}{\langle f, f \rangle}
\end{equation}
(see \cite[Theorem 3B.4]{Shimura2}). The Fourier coefficients can also be normalized so that they are real (this immediately follow from \cite[Theorem 3B.5]{Shimura2}). Also, in this case $c(8n)=\lambda_2 \cdot c(2n)$ where $\lambda_2 \in \{\pm 1\}$ and is independent of $n$ (this follows from \cite[Main Theorem]{Shimura1}, Niwa's isomorphism, and Atkin-Lehner \cite[Theorem 3]{AtkinLehner}, see the proof of Theorem 5 of \cite{BP}). Hence, our argument proceeds just as before and $\{ c(n) \}_{\mathbb N_g^{\flat}(X)}$ has $\gg X^{1-\varepsilon}$ sign changes unconditionally and $\gg X$ under GRH.

Let us now suppose $f \in S_{2k}(2)$ is an oldform. Then by Baruch-Purkait \cite{BP}, there exists $G \in S_{k+1/2}^+$ and $a,b 
\in \mathbb C$ such that
$
g(z)=a \cdot G(z)+b \cdot (W_4 G)(z).
$
If $W_4G =0$ then $g \in S_{k+1/2}^+$, so we are done. Suppose $W_4G \neq 0$, since $S_{k+1/2}^{+} \cap A_{k+1/2}^+=\{0\}$, $W_4G \notin S_{k+1/2}^+$ so $U_4G =U_4 W_4 (W_4 G)=\alpha_2 W_4 G$ since $W_4$ is an involution.
% , where $\alpha_2=-2^{k-1} \left( \frac{2}{2k+1}\right)$ (note that $\left( \frac{2}{2k+1}\right)=(-1)^{k(k+1)/2}$).
Hence,
\[
g(z)=\sum_{n \ge 1} \left(a \cdot c_G(n) + \frac{b}{\alpha_2} \cdot c_G\left( 4n\right)\right) e(nz).
\]
For $n \in \mathbb N^{\flat}$ write $n=8m$ where $2m$ is square-free then by \eqref{eq:shimura}
\[
c_g(8m)=a \cdot c_G(8m)+ \frac{b}{\alpha_2} \cdot c_G(2^2 \cdot 8m)=c_G(8m) \cdot \left(a+\frac{b}{\alpha_2} \cdot \lambda_F(2) \right),
\]
where $\lambda_F(\cdot)$ denotes the Hecke eigenvalues of the level $1$ modular form which corresponds to $G$ under $\psi$.
By assumption $a+\frac{b}{\alpha_2} \cdot \lambda_F(2)\neq 0$, since otherwise $c_g(8m)\mu^2(2m)= 0$ for every $m \in \mathbb N$. Moreover, since $G \in S_{k+1/2}^+$ we know $\{c_G(n)\}_{n \in \mathbb N_G^{\flat}(X)}$ has $\gg X$ sign changes under GRH and $\gg X^{1-\varepsilon}$ unconditionally, so the result follows.  \end{proof}

Additionally, it is possible to extend our result to level $4N$ with $N$ square-free and odd if we also restrict to fundamental discriminants that lie in a suitable progression. For each prime divisor of $p|N$ we require that
$
\chi_{d}(p)=w_{p}
$
where $w_{p}$ is the eigenvalue of the Atkin-Lehner operator $W_{p}$. So by the Chinese Remainder Theorem there exists $\eta \pmod{ N}$ such that for $d \equiv \eta \pmod{N}$ we have $\chi_d(p)=w_p$. Let $\mathbb N_{N}^{\flat}=\{ n \in \mathbb N : n=8m, \mu^2(2m)=1,  \text{ and } (-1)^kn \equiv \eta \pmod{N}\}$ and $\mathbb N_{N,g}^{\flat}(X)=\{ n \le X : n \in \mathbb N_{N}^{\flat} \text{ and } c(n) \neq 0 \}$. Note that for $n \in \mathbb N_N^{\flat}$ by construction $(N,n)=1$. 

Let $S_{k+1/2}(4N)$ denote the space of weight $k+1/2$ cusp forms of level $4N$, with $N$ odd and square-free. Also, let $S_{k+1/2}^{-}(4N)$ be as defined by Baruch-Purkait (see Section 6.3 of \cite{BP}), who showed that this space is isomorphic to $S_{2k}^{\text{new}}(2N)$. This complements Kohnen's result \cite{KohnenNewform} that $S_{k+1/2}^{\text{new}}(4N)$ is isomorphic to $S_{2k}^{\text{new}}(N)$.

We can also prove the following result.

\begin{theorem}
Let $k \ge 2$ and $N$ be odd and square-free.  Suppose $g \in S_{k+1/2}^{+, \text{new}}(4N)$ or $g \in S_{k+1/2}^{-}(4N)$, is a Hecke cusp form.
Then it is possible to normalize $g$ so that its $nth$ Fourier coefficient is real for $n \in \mathbb N_{N}^{\flat}$. Moreover, for every $\varepsilon>0$ the sequence $\{ c(n) \}_{n \in \mathbb N_{N,g}^{\flat}(X)}$ has $\gg X^{1-\varepsilon}$ sign changes. Assuming GRH the sequence $\{ c(n) \}_{n \in \mathbb N_{N,g}^{\flat}(X)}$ has $\gg  X$ sign changes.
\end{theorem}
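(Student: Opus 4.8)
The plan is to reduce this to the level-$4$ case already treated, by establishing level-$4N$ analogues of the three main estimates (Propositions \ref{prop:secondmoment}, \ref{prop:firstmoment} and \ref{mainprop}) and then running \emph{verbatim} the argument of Section \ref{thm:sign} under GRH and of Section \ref{thm:sign2} unconditionally. The first step is to record the relevant Shimura lift and Waldspurger formula. By Baruch--Purkait \cite{BP} together with Kohnen \cite{KohnenNewform}, a Hecke cusp form $g \in S_{k+1/2}^{+,\mathrm{new}}(4N)$ corresponds to a weight $2k$ newform $f$ of level $N$, while $g \in S_{k+1/2}^-(4N)$ corresponds to a weight $2k$ newform $f$ of level $2N$; in either case, for $n \in \mathbb N_N^\flat$, the explicit forms of Waldspurger's theorem due to Shimura and Kohnen (see \cite[Theorem 3B.4]{Shimura2}, \cite{KohnenNewform}) give
\[
|c(8n)|^2 = c_g \cdot L(\tfrac12, f \otimes \chi_{(-1)^k 8n}),
\]
with a positive constant $c_g$ depending only on $g$, and the Fourier coefficients can be normalized to be real for $n \in \mathbb N_N^\flat$ since the Shimura lift is a newform with real Hecke eigenvalues. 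The whole point of the congruence condition $(-1)^k n \equiv \eta \pmod N$ in the definition of $\mathbb N_N^\flat$ is that it pins down $\chi_d(p) = w_p$ at each $p \mid N$, so the root number of $L(s, f \otimes \chi_d)$ equals $+1$ for all relevant $d$ and the Waldspurger constant never degenerates; in particular $\{c(n)\}_{n \in \mathbb N_{N,g}^\flat(X)}$ inherits its sign behaviour from $L(\tfrac12, f \otimes \chi_{(-1)^k 8n})$ exactly as in \eqref{eq:waldspurger}.

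Second, I would port the mollified moment bounds. The upper bound (Proposition \ref{prop:moments}, hence Proposition \ref{prop:secondmoment}) is proved by Harper's method and only uses GRH for $L(s, f \otimes \chi_d)$ together with $|\lambda_f(p)| \le 2$ and $\lambda_f(p^2) = \lambda_f(p)^2 - 1$; these hold for the newform $f$ of level $N$ or $2N$, and the mollifier \eqref{eq:mollifierdef} is built purely out of $\lambda_f$, so Section 4 goes through with cosmetic changes, the Euler factors at $p \mid 2N$ being absorbed into the $O(1)$ terms and Chandee's inequality \eqref{eq:soundbound} still applying. For the lower bound (Proposition \ref{prop:firstmoment}) one needs the twisted first moment of $L(\tfrac12, f \otimes \chi_d)$ over $d \in \mathbb N_N^\flat$; this is a modest generalization of Proposition \ref{prop:twisted} in which the character is restricted to the progression $\eta \pmod N$ (inserted as $\frac{1}{\varphi(N)}\sum_{\psi \bmod N}\overline{\psi}(\eta)\psi(\cdot)$, with the non-principal $\psi$ contributing a lower-order term via the convexity bound) and the level of $f$ is $N$ or $2N$ rather than $1$. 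The main term still comes from the diagonal $\ell = 0$ after Poisson summation (Lemma \ref{lem:poisson}), the leading constant is positive, and the off-diagonal $\ell \neq 0$ is handled by the Weil bound exactly as in Section 5.

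Third, I would generalize the short-interval shifted-convolution estimates, Propositions \ref{mainprop} and \ref{scpprop}, to cusp forms of level $4N$. The proof is again Jutila's circle method (Lemma \ref{le:circle}) combined with modularity: the transformation \eqref{eq:transformation} now holds for $\gamma \in \Gamma_0(4N)$, so in Lemma \ref{le:modularity} one takes denominators $q = 4N\Delta r$ with $r$ a large prime congruent to $1 \pmod 4$, performs the same change of variables, and again lands on a sum of Kloosterman sums $S(\ast, -h; r)$ controlled by Weil's bound. The numerology changes only through the fixed factor $N$, so after optimising the analogue of $\eta$ one still obtains a power saving $X^{1 - \delta + \varepsilon}$ uniformly for $y, M \le X^{c}$ for some $c > 0$ (with the implied constants depending on $N$). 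With these three inputs, the deduction is identical to Sections \ref{thm:sign} and \ref{thm:sign2}: the mollified fourth and second moment bounds feed through H\"older's inequality (as in Lemma \ref{lem:lowerbdmt}) to produce $\gg X$ integers $x \in [X,2X]$ with $\sum_{x \le 8n \le x+y}|c(8n)| M$ large, while Proposition \ref{mainprop} with Markov's inequality (Lemma \ref{lem:upperbdmt}) shows the unabsolute sum is small outside $o(X)$ values of $x$; intersecting gives $\gg X$ sign changes under GRH, and dropping the mollifier and using only Heath-Brown's fourth moment bound \cite{HB} gives $\gg X^{1-\varepsilon}$ unconditionally.

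The main obstacle is not conceptual but bookkeeping: one must track the local factors at primes dividing $2N$ through the Harper-type inequality \eqref{eq:harperineq}, through the twisted first moment, and through the Poisson and Kloosterman steps, and verify that restricting to the progression $\eta \pmod N$ destroys neither the positivity of the main term in Proposition \ref{prop:twisted} nor the final sign-change count. Beyond replacing $\Gamma_0(4)$ by $\Gamma_0(4N)$ and rechecking the exponents, no genuinely new ingredient is required.
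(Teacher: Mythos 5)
Your proposal is correct and follows essentially the same route as the paper's (itself only sketched) proof: invoke the explicit Waldspurger formulas of Shimura and Kohnen for $S_{k+1/2}^{-}(4N)$ and $S_{k+1/2}^{+,\text{new}}(4N)$, with the progression $(-1)^k 8n\equiv\eta\pmod N$ fixing the Atkin--Lehner signs so the proportionality constant is positive and the coefficients can be normalized to be real, and then port the three main estimates (mollified fourth moment, twisted first moment over $\mathbb N_N^\flat$, and the level-$4N$ shifted-convolution/short-interval bound with the modularity lemma adapted to $\Gamma_0(4N)$ and the modulus $\Delta$ enlarged to absorb the congruence) before rerunning the arguments of Theorems \ref{thm:sign} and \ref{thm:sign2}. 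The only cosmetic divergence is that you insert the progression in the first moment via Dirichlet-character orthogonality, whereas the paper points to the modifications already carried out by Radziwi\l\l--Soundararajan; both are standard and equivalent in effect.
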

% {\tt I made some changes to the statement and proof of this theorem}

\begin{proof} We will only sketch the argument, since our main propositions need to be modified when $N  \neq 1$. 
% Without loss of generality we may assume that $f \in S_{2k}^{\text{new}}(2N)$ or $f \in S_{2k}^{\text{new}}(N)$. 
% In the first case 
% Niwa (see \cite[Remark 4]{Niwa}) shows that there exists a Hecke algebra isomorphsim $\psi: S_{k+1/2}(4N) \rightarrow S_{2k}(2N)$. Hence, by Atkin-Lehner \cite[Theorem 5]{AtkinLehner} (see also Baruch-Purkait \cite[Corollary 6.18, Proposition 6.19]{BP}) we can assume without loss of generality that either $\psi(g)(z)=f(z) =\alpha \cdot F(z)+\beta \cdot F(2z)$ where $F \in S_{2k}^{\text{new}}(N)$ or $f \in S_{2k}^{\text{new}}(2N)$. 
First, suppose 
% $f$ is a newform on $\Gamma_0(2N)$. 
$g \in S_{k+1/2}^{-}(4N)$.
Then by \cite[Theorem 8]{BP} multiplicity one holds in $S_{k+1/2}^{-}(4N)$ in the whole space $S_{k+1/2}(4N)$\footnote{Multiplicity one also holds in the space $S_{k+1/2}^+$, but fails in the entire space $S_{k+1/2}$ (consider $g \in S_{k+1/2}^+$ and $W_4 g$).}. Hence for $(-1)^kd \in \mathbb N_{N}^{\flat}$ we can apply Shimura's result \cite[Theorem 3B.4]{Shimura2} to get that
\begin{equation} \label{eq:shimurawald}
|c(2n)|^2=2^{\omega(N)} \frac{(k-1)!}{2\pi^k}\frac{\langle g,g \rangle}{\langle f,f \rangle} L(\tfrac12, f \otimes \chi_d)
\end{equation}
where $\omega(N)=\sum_{p|N} 1$. Here we have used the condition $d \equiv \eta \pmod{N}$ to estimate the Euler product present in the statement of the theorem along with \cite[Theorem 3]{AtkinLehner}. Using \cite[Theorem 3B.5]{Shimura2} it also follows that the Fourier coefficients of $g$ can be normalized so that they are real. Also, in this case just as before we have $c(8n)=\lambda_2 \cdot c(2n)$ with $\lambda_2 \in \{ \pm 1\}$.

Next, suppose $g(z) \in S_{k+1/2}^{+, \text{new}}(4N)$.
% Next, suppose $f(z)\in S_{2k}^{\text{new}}(N)$. 
% In this case,
% Kohnen \cite[Theorem p.47]{KohnenNewform} showed that $g \in S_{k+1/2}^+$
% Baruch-Purkait \cite[Corollary 6.18]{BP} proved that there exists $G \in S_{k+1/2}^{+, \text{new}}(4N)$ (where $S_{k+1/2}^{+, \text{new}}(4N)$ is as defined by Kohnen \cite{KohnenNewform}) such that $g=a \cdot G+ b \cdot (W_4 G)$. They also showed that  $S_{k+1/2}^{+, \text{new}}(4N) \bigcap W_4S_{k+1/2}^{+, \text{new}}(4N)= \{0\}$. Repeating the same argument in the proof of Theorem \ref{thm:linearalgebra} (see \cite[Proposition 1]{KohnenNewform} and \cite[]{}) we see that it suffices to count the sign changes of Fourier coefficients of $G \in S_{k+1/2}^{+, \text{new}}(4N)$ over $\mathbb N_{N}^{\flat}$
Kohnen \cite{KohnenNewform} proved $S_{k+1/2}^{+,\text{new}}(4N)$ and $S_{2k}^{\text{new}}(N)$ are isomorphic as Hecke algebras. In this setting we can apply Proposition 4.2 of Kumar and Purkait \cite{KP} and it follows $g \in S_{k+1/2}^{+,\text{new}}(4N)$ can be normalized so that it has real (and algebraic) Fourier coefficients.  Moreover, for discriminants $(-1)^k d \in \mathbb N_N^{\flat}$ Corollary 1 of \cite{KohnenWald} implies
\begin{equation} \label{eq:kohnenwald}
|c(8n)|^2=2^{\omega(N)} \frac{\Gamma(k)}{\pi^k}\frac{\langle g,g \rangle}{\langle f,f \rangle} L(\tfrac12, f \otimes \chi_d).
\end{equation}

 Using the formulas \eqref{eq:shimurawald} and \eqref{eq:kohnenwald} we have in each case above that
\[
\sum_{n \in \mathbb N_{g,N}^{\flat}(X) } |c(8n)|^4 M((-1)^k 8n; \tfrac12)^4 \ll \sumfund_{|d| \le X}  L(\tfrac12, f\otimes \chi_d) M(d; \tfrac12)^4.
\]
To bound this mollified moment, the only modification needed in the proof of Proposition \ref{prop:moments} is to change the definition of $I_0$ so that $I_0=(c, X^{\theta_0}]$ with $c$ sufficiently large in terms of $N$. Repeating the argument (with no further modifications) we arrive at
\[
 \sumfund_{|d| \le X}  L(\tfrac12, f\otimes \chi_d) M(d; \tfrac12)^4 \ll X.
\]
We also need to prove
\[
\sum_{n \in \mathbb N_{g,N}^{\flat}(X) } |c(8n)|^2 M((-1)^k 8n; \tfrac12)^2 \asymp X.
\]
This follows in the same way as before once we have established an analog of Proposition \ref{prop:twisted} for $f \in S_{2k}^{\text{new}}(M)$ with $M=N$ or $M=2N$, where we average over discriminants $(-1)^kd \in \mathbb N_N^{\flat}$. The necessary modifications for this computation have already been worked out in the paper of Radziwi\l \l\, and Soundararajan \cite{RadziwillSound}. 
Finally, we need to establish the estimate
\[
\sum_{X \leq x \leq 2X} \Big |  \sum_{\substack{x \leq 8n \leq x + y \\ n \text{ odd} \\ (-1)^k 8n \equiv \gamma \pamod{N} }} \mu^2(n) c(8n) M((-1)^k 8 n) \Big | \ll X \sqrt{y} + X^{1-\frac{1}{2148}+\varepsilon}.
\]
To do this we first need to modify the proof of Lemma \ref{le:modularity} in a straightforward way. From here we arrive at the analog of Proposition \ref{scpprop}, for any $g \in S_{k+1/2}(4N)$.  To establish the above bound we repeat the argument used in the proof of Proposition \ref{mainprop}.  The only modification necessary is that the range of $\Delta$ in \eqref{eq:almostfinal} will now be $1 \le \Delta \le 16  N Y^4 M^2$ to account for the progression $(-1)^k 8n \equiv \eta \pmod{N}$. 

Combining the three estimates above we argue as in the proof of Theorem \ref{thm:sign} thereby finishing the proof. \end{proof}
\section{Acknowledgements}
We would like to thank Abhishek Saha for many helpful conversations regarding Kohnen's plus space and for pointing out Shimura's paper \cite{Shimura2}. We would also like to express our gratitude to Dinakar Ramakrishnan who provided us with comments on an earlier draft of the appendix and to Philippe Michel for useful suggestions regarding the shifted convolution problem.

\bibliographystyle{amsplain}
\bibliography{references}
\end{document}